\documentclass[12pt, oneside]{amsart}   	
\usepackage[margin= 1in]{geometry}                		
\geometry{letterpaper}                   		
\usepackage{graphicx}				
\usepackage{amssymb}
\usepackage{mathrsfs}
\usepackage{amsthm}
\usepackage{amsmath}
\usepackage{hyperref}
\usepackage{tikz-cd}
\usepackage{dsfont}
\usepackage{tikz}
\usepackage{tikz-3dplot}
\usetikzlibrary{math}
\linespread{1}
\newcounter{thm}
\newtheorem{remark}[thm]{Remark}
\newtheorem{proposition}[thm]{Proposition}
\newtheorem{definition}[thm]{Definition}
\newtheorem{theorem}[thm]{Theorem}

\newtheorem{lemma}[thm]{Lemma}
\newtheorem{corollary}[thm]{Corollary}

\newcommand{\NN}{\mathbb{N}}
\newcommand{\ZZ}{\mathbb{Z}}
\newcommand{\kk}{\mathds{k}}
\newcommand{\RR}{\mathbb{R}}
\newcommand{\val}{\operatorname{val}}
\newcommand{\vir}{\operatorname{vir}}

\newcommand{\pt}{\operatorname{pt}}

\newcommand{\ev}{\operatorname{ev}}

\newcommand{\Spec}{\operatorname{Spec}}
\newcommand{\gp}{\operatorname{gp}}

\newcommand{\Gr}{\operatorname{Gr}}
\newcommand{\tor}{\operatorname{tor}}
\newcommand{\trop}{\operatorname{trop}}
\newcommand{\out}{\operatorname{out}}
\newcommand{\Pic}{\operatorname{Pic}}
\newcommand{\an}{\operatorname{an}}
\newcommand{\lcm}{\operatorname{lcm}}
\newcommand{\Forget}{\operatorname{Forget}}
\newcommand{\NE}{\operatorname{NE}}
\newcommand{\coker}{\operatorname{coker}}
\makeatletter
\@namedef{subjclassname@2020}{\textup{2020} Mathematics Subject Classification}
\makeatother

\usetikzlibrary{fadings}

\numberwithin{equation}{section}
\numberwithin{thm}{section}
\numberwithin{figure}{section}

\title{Quantum periods, toric degenerations and intrinsic mirror symmetry}
\author{Samuel Johnston}
\address{Samuel Johnston, Imperial College London, South Kensington Campus, London SW7 2AZ, UK}
\email{samuel.johnston@imperial.ac.uk}
\date{}		
\subjclass[2020]{14J33,14N35,14N10}



\date{}
\begin{document}

\begin{abstract}
Given a Fano variety $X$, and $U$ an affine log Calabi-Yau variety given as the complement of an anticanonical divisor $D \subset X$, we prove that for any snc compactification $Y$ of $U$ dominating $X$ with $D' = Y\setminus U$, there exists an element $W_D \in R_{(Y,D')}$ of the intrinsic mirror algebra whose classical periods give the regularized quantum periods of $X$. Using this result, we deduce various corollaries regarding Fano mirror symmetry, in particular integrality of regularized quantum periods in large generality and the existence of Laurent mirrors to all Fano varieties whose mirrors contain a dense torus. When $U$ is an affine cluster variety satisfying the Fock-Goncharov conjecture, we use this result to produce a family of polytopes indexed by seeds of $U$ determined by enumerative invariants of  the pair $(X,D)$ which give a family of Newton-Okounkov bodies and toric degenerations of $X$. Moreover, we give an explicit description of the superpotential in the Grassmanian setting, in particular recovering the Pl\"ucker coordinate mirror discovered by Marsh and Rietsch. Finally, we use the main result to show that the quantum period sequence is equivalent to all theta function structure constants for $R_{(X,D)}$ when $D$ is a smooth anticanonical divisor.
\end{abstract}
\maketitle

\section{Introduction}
A strong deformation invariant of a Fano variety $X$ is its quantum period sequence, a certain generating series of Gromov-Witten invariants of $X$. A prediction of Fano/LG mirror symmetry is that these quantum periods are given by oscillatory integrals of a certain function on the mirror LG model. In this paper, we show how intrinsic mirror symmetry developed by Gross and Siebert yields a natural choice of mirror Landau-Ginzburg models for a pairs $(X,D)$ of a Fano variety $X$ and an anticanonical divisor $D$, including many situations in which $(X,D)$ does not immediately have an associated intrinsic mirror algebra or canonical wall structure. 

More precisely, for a log Calabi-Yau pair $(X,D)$ with $D$ a simple normal crossings divisor, the main result of \cite{int_mirror} constructs an algebra $R_{(X,D)}$ over a monoid ring $S_X$ of effective curves classes $H_2^+(X)$. This $S_X$-algebra has a basis, called the theta basis, indexed by an appropriate subset of the integral points of the dual cone complex of $D$. In particular, there is a theta function $\vartheta_0$ which gives the identity element in $R_{(X,D)}$. For $W \in R_{(X,D)}$, denoting by $W[\vartheta_0]$ the coefficient of $\vartheta_0$ of $W$, we define the classical periods of $W$ by:
\[\pi_W = \sum_{d\ge 0} W^d[\vartheta_0] \in \kk[[H_2^+(X)]]\]

When $X$ is a smooth Fano variety, we define a collection of genus $0$ $1$-pointed Gromov-Witten invariants of $X$ indexed by $\textbf{A} \in \NE(X)$:
\[n_{\textbf{A}} = \int_{[\mathscr{M}_{0,1}(X,\textbf{A})]^{\vir}} \psi_{x}^{\textbf{A}\cdot c_1(X)-2}\ev_x^*([pt])\]
 We collect these invariants into a generating series $\widehat{G}_X$ called the regularized quantum periods of $X$:

\begin{equation}\label{regqp}
\widehat{G}_X = \sum_{d\ge 0} \sum_{ -K_X\cdot A = d} d!n_\textbf{A} t^\textbf{A} \in \kk[[H_2^+(X)]].
\end{equation}
The inner sum above is finite, and for $d \ge 0$, we let $n_d = \sum_{-K_X\cdot A = d} n_\textbf{A}$. Our main theorem is the following:

\begin{theorem}\label{mthm1}
Let $(X,D)$ be a pair with $X$ smooth Fano and $D \in |-K_X|$ a reduced representative. Suppose there exists a log Calabi-Yau pair $(X'',D'')$ in the sense of \cite{int_mirror} with $X''\setminus D'' \cong X\setminus D$. Then for any dominating snc compactification $(X',D')$ of $(X,D)$, there exists a function $W_{D} \in R_{(X',D')}$ which is mirror to the compactification $(X,D)$ of $U$, i.e. after an appropriate base change of the mirror family $\Spec\text{ }R_{(X',D')} \rightarrow \Spec\text{ }H_2^+(X')$:
\begin{equation}\label{weakmir}
\pi_W = \widehat{G}_X.
\end{equation}
 
\end{theorem}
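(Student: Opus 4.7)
The plan is to construct $W_D \in R_{(X',D')}$ as a weighted sum of theta functions naturally attached to the divisor $D$, and then identify the classical periods $\pi_W$ with $\widehat{G}_X$ by expressing both sides as log Gromov--Witten invariants and reducing via birational invariance of the theory together with a maximal-tangency comparison.

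First, I would define $W_D$ as follows. Writing $\pi : X' \to X$ for the dominating morphism and $\pi^*D = \sum_i a_i D_i'$ for the decomposition of the pullback into components of $D'$ with multiplicities, each $D'_i$ with $a_i > 0$ determines a primitive generator $v_i$ of a ray of the tropicalization of $(X',D')$. I would set
\[W_D := \sum_{a_i > 0} a_i \, \vartheta_{v_i} \in R_{(X',D')},\]
so that tropically $W_D$ recovers the piecewise-linear function for $\pi^*D$ along $D'$. By the definition of the intrinsic mirror product, the coefficient of $\vartheta_0$ in $W_D^d$ at a class $\textbf{A}' \in H_2^+(X')$ expands as a weighted sum of punctured log Gromov--Witten invariants of $(X',D')$ with $d$ input punctures of contact data prescribed by tuples $(v_{i_1},\ldots,v_{i_d})$ and output contact zero.

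Next I would invoke invariance of punctured log GW invariants under log modifications, together with the isomorphism $X'' \setminus D'' \cong X \setminus D$, to identify these invariants with log maps to $(X,D)$ whose $d$ distinguished markings coalesce into a single marked point of total contact order $-K_X \cdot \pi_* \textbf{A}'$ with $D$. After summing over tuples of fixed total contact, the multinomial rearrangement of the weights $a_{i_j}$ produces the identity
\[W_D^d[\vartheta_0]|_{\textbf{A}} = d!\, M_{\textbf{A}},\]
where $M_{\textbf{A}}$ is the one-marked maximal-tangency log invariant of $(X,D)$ in class $\textbf{A}$ with a point condition on $D$. A direct comparison of moduli, in the spirit of Gathmann's formula but rendered in the logarithmic framework, identifies $M_{\textbf{A}} = n_{\textbf{A}}$: the log moduli is a virtual fibration over $\mathscr{M}_{0,1}(X,\textbf{A})$ whose fiber direction is absorbed into the descendant class $\psi_x^{-K_X\cdot\textbf{A}-2}$. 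Summing over $d$ and $\textbf{A}$ then yields $\pi_W = \widehat{G}_X$ after base change along $H_2^+(X') \to H_2^+(X)$.

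The main obstacle is controlling both reductions simultaneously. The log modification step must bridge $(X',D')$ and the possibly non-snc pair $(X,D)$, routed through the auxiliary pair $(X'',D'')$ whose existence is exactly what the hypothesis supplies; one has to ensure that the relevant punctured invariants are well-defined and unchanged under the intervening birational modifications. The matching $W_D^d[\vartheta_0] = d!\, n_{\textbf{A}}$ then combines the Gathmann-style psi-class identification on the $X$ side with a careful combinatorial bookkeeping to extract the factorial, which arises from symmetrizing the $d$ inputs of the product together with the multinomial expansion of the contact profiles.
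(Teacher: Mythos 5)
Your definition of $W_D$ is not the paper's and is in fact wrong. You set $W_D = \sum_{a_i > 0} a_i \vartheta_{v_i}$ where $\pi^*D = \sum_i a_i D_i'$ runs over \emph{all} components of $D'$ appearing in the pullback, weighted by their multiplicities. The paper's $W_D$ is the unweighted sum $\sum_i \vartheta_{D_i}$ over the original components $D_i$ of $D$ only, identified with their divisorial valuations $\nu_{D_i} \in B(U)$ (which are canonical, independent of the resolution). Your version has two problems. First, it includes exceptional divisors of $X' \to X$ whenever their center lies in $D$; for example, blowing up a double point of an snc $D$ produces an exceptional divisor $E$ with $a_E = 2$, contributing a spurious term $2\vartheta_E$ that the paper's formula does not have, so your classical period will carry extra terms. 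Second, the theta function $\vartheta_{v_i}$ is only defined when $v_i$ lies in the essential skeleton $B(U)$, i.e.\ when the log discrepancy of $D_i'$ in $K_{X'} + D'$ (not its multiplicity in $\pi^*D$) vanishes; these are entirely different quantities and there is no reason your $\vartheta_{v_i}$'s are even all defined. Finally, your $W_D$ depends on the choice of dominating $X'$, whereas the paper's is manifestly intrinsic to $U$, which is essential for the reduction step in the proof.

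Beyond the definition, your proposed mechanism for the hard step also does not match the paper and has a genuine gap. You invoke ``invariance of punctured log GW invariants under log modifications,'' but no such general theorem is available in the punctured setting (this is one of the reasons the paper establishes independence of compactification indirectly, via Lemma~\ref{primnaive}, by showing the relevant structure constants are Keel--Yu-style naive curve counts in $U$ itself). You then assert a ``Gathmann-style'' identification $M_{\textbf{A}} = n_{\textbf{A}}$ by viewing the log moduli as a virtual fibration over $\mathscr{M}_{0,1}(X,\textbf{A})$ with the $\psi$-power absorbing the fiber direction; this is a heuristic, not an argument, and it is not how the paper proceeds. The paper's actual proof constructs a log smooth degeneration $\mathscr{X} \to \mathbb{A}^1$ whose general fiber is $X$ and whose special fiber has a distinguished main component isomorphic to $X'$ (built by iterated blowups of centers in the special fiber of the trivial family followed by one further blowup of $D\times 0$), applies the decomposition formula of \cite{decomp}, and runs a detailed tropical analysis of which decorated rigid types $\pmb\tau$ contribute. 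Amplitude, nefness of the pulled-back $D$, and the point constraint force the unique vertex carrying the point marking to have valency exactly $d+1$ with primitive contact orders; a gluing argument via \cite{trglue} and Lemma~\ref{primnaive} then reduces to naive counts, and the combinatorics of automorphisms $q_1!\cdots q_k!$ produce the factorial. Your multinomial bookkeeping for the factorial is broadly in the right spirit, but without the degeneration and without the corrected $W_D$ there is no bridge from $W_D^d[\vartheta_0]$ to $d!\,n_{\textbf{A}}$.
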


\begin{remark}
\begin{enumerate}
\item Note that the condition of the existence of a pair $(X'',D'')$ imposes a restriction on the singularities of $D$. For instance, the theorem does not apply if we let $X = \mathbb{P}^2$ and $D$ the union of three lines intersecting at a point. This pair is resolved by blowing up the intersection point, but the resulting snc pair is not log Calabi-Yau.
\item Mandel in \cite{fanoperiod} has previously shown under various hypotheses that the regularized quantum periods smooth Fano variety are a naive curve count, see Theorem $1.2$ of op. cit. We recover and extend this result in our setting, as the $\vartheta_0$ structure constants for an affine log Calabi-Yau encode these naive counts by the main result of \cite{mirrcomp}.
\item In \cite{Tonk19}, Tonkonog proves an analogous result in symplectic geometry, in which the function $W_D$ of Theorem \ref{mthm1} is replaced with the Landau-Ginzburg potential of a monotone Lagrangian torus $L \subset X$. 

\end{enumerate}
\end{remark}

The independence of the general fiber of the mirror families $(X',D')$ above are known to be independent when $X\setminus D$ contains a dense torus by a combination of the main results of \cite{archmirror} and \cite{mirrcomp}. In general, since $X\setminus D$ is affine, this independence is expected but not yet known. In any case, Therorem \ref{mthm1} states that any choice of compactification yields a mirror Landau-Ginzburg pair in the weak sense described by Equation \ref{weakmir}. 

Using the mirror symmetry result of Theorem \ref{mthm1}, we derive the following corollaries relevant to classical mirror symmetry of Fano varieties:

\begin{corollary}
Assuming there exists $D \in |-K_X|$ reduced with $X\setminus D$ log Calabi-Yau, then the regularized quantum periods are non-negative integers. 
\end{corollary}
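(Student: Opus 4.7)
The strategy is to combine Theorem~\ref{mthm1} with the naive curve-count interpretation of $\vartheta_0$-structure constants supplied by the main result of \cite{mirrcomp}.

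By Theorem~\ref{mthm1}, after the appropriate base change we may write
\[\widehat{G}_X \;=\; \pi_{W_D} \;=\; \sum_{d \ge 0} W_D^d[\vartheta_0]\]
for some $W_D \in R_{(X',D')}$ and a dominating snc compactification $(X',D')$ of $(X,D)$. Expanding $W_D = \sum_p c_p\, \vartheta_p$ in the theta basis, the coefficient $W_D^d[\vartheta_0]$ becomes a finite sum of products $c_{p_1}\cdots c_{p_d}\, N^0_{p_1,\dots,p_d}$, where $N^0_{p_1,\dots,p_d}$ denotes the coefficient of $\vartheta_0$ in the iterated product $\vartheta_{p_1}\cdots \vartheta_{p_d}$. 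It therefore suffices to show that the theta-coefficients $c_p$ of $W_D$ and the iterated $\vartheta_0$-structure constants $N^0_{p_1,\dots,p_d}$ are non-negative integers.

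For the structure constants, the main result of \cite{mirrcomp} identifies the $\vartheta_0$-structure constants of $R_{(X',D')}$ with naive (non-virtual) enumerative counts of rational curves in the affine log Calabi-Yau $X' \setminus D'$ with prescribed tangency data along $D'$ and passing through a generic point. Such counts are non-negative integers by construction, and the iterated constants $N^0_{p_1,\dots,p_d}$ reduce to these three-pointed counts via the associativity of theta-function multiplication. Positivity and integrality of the $c_p$ should follow from inspecting the construction of $W_D$ inside the proof of Theorem~\ref{mthm1}, where $W_D$ is built as a non-negative integer combination of theta functions attached to the irreducible components of $D'$.

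The main obstacle is pinning down the precise shape of $W_D$ produced by Theorem~\ref{mthm1} and verifying that its theta-expansion has non-negative integer coefficients; once this is granted, combining with the enumerative positivity from \cite{mirrcomp} immediately yields $\widehat{G}_X \in \ZZ_{\ge 0}[[H_2^+(X)]]$, which is the claimed conclusion.
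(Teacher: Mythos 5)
Your plan follows essentially the same route as the paper: invoke Theorem~\ref{mthm1} to express $\widehat{G}_X$ as $\pi_{W_D}$, then argue that the $\vartheta_0$-structure constants entering $W_D^d[\vartheta_0]$ are non-negative integers because they are naive curve counts. The form of $W_D$ is not something you need to "pin down" by inspection --- the paper defines $W_D = \sum_i \vartheta_{D_i}$ explicitly just before the proof of Theorem~\ref{mthm1}, with $D_i$ ranging over the irreducible components of $D$ (a genuine sum of theta functions with coefficients $1$), so your guess there is correct and there is no gap on that front.

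Where your argument does have a genuine gap is the citation. You appeal to "the main result of \cite{mirrcomp}," i.e.\ Theorem~\ref{enum}, which identifies the $\vartheta_0$-structure constants with the naive counts $\eta(p_1,\ldots,p_m,\textbf{A})$. But that theorem carries the hypothesis that $D$ (hence $D'$) is the support of a nef divisor \emph{containing a zero stratum}, i.e.\ a maximal boundary assumption. This hypothesis can fail under the assumptions of the corollary: for example if $D$ is a smooth anticanonical divisor (an elliptic curve in $\mathbb{P}^2$, say), $\Sigma(X')$ need not contain a cone of dimension $\dim X$. The paper circumvents this by proving Lemma~\ref{primnaive}, which establishes $\vartheta_{p_1}\cdots\vartheta_{p_m}[\vartheta_0 z^{\textbf{A}}] = \eta(p_1,\ldots,p_m,\textbf{A})$ precisely for the primitive inputs $p_i$ associated to components of $D$ in a dominating compactification, \emph{without} the maximal-boundary hypothesis. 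That lemma is the engine that gives integrality and non-negativity here, and your argument needs to be rerouted through it rather than Theorem~\ref{enum}.
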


In applications to representation theory, many Fano varieties come in the form of a compactification of a cluster variety by an anticanonical divisor, for instance Grassmannians \cite{scottclust} and flag varieties \cite{braidclust}. In these settings, we derive the following corollary:

\begin{corollary}\label{qdiff}
Suppose $U$ is a skew-symmetric affine cluster variety over $\mathbb{C}$ with optimized seeds for each of the divisiorial valuations $\nu_{D_i}$ associated with $D_i \subset D$ in the sense of \cite[Definition $9.1$]{bases_cluster}. Given a seed torus $\mathcal{T}_s \subset U$ with cluster monomials $x_1,\ldots,x_n$, then $X$ has a mirror Laurent polynomial defined on the dual torus $\mathcal{T}^{\vee}_s$, i.e. a Laurent polynomial $W$ defining a function on the dual torus $\mathcal{T}_s^{\vee}$ such that the following oscillatory integral is a solution to the quantum differential equation for $X$:

\[g(t) = \frac{1}{(2\pi i)^n}\int_{\Gamma} e^{Wt} \bigwedge_j\frac{dx_j^*}{x_j^*}.\]
The integral above is computed using a dual seed $\mathcal{T}_s^{\vee} = (\mathbb{C}^n)^* \rightarrow \check{X}^\circ$ of the Fock-Goncharov dual cluster variety, with the homology of the cycle $\Gamma \in H_n((\mathbb{C}^n)^*,\ZZ)$ represented by an oriented compact torus $(S^1)^n$ which is the real form of $\mathcal{T}_s^{\vee}$.

\end{corollary}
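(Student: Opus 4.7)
The plan is to combine Theorem \ref{mthm1} with the cluster-algebraic structure on $R_{(X,D)}$: restricting $W_D$ to a dual seed torus and specializing the Novikov parameters yields a Laurent polynomial $W$, whose constant-term series one can identify with the quantum period $G_X$ via a residue calculation.

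First I would apply Theorem \ref{mthm1} to $(X,D)$ to obtain $W_D \in R_{(X',D')}$ satisfying $\pi_{W_D} = \widehat{G}_X$, for a dominating snc compactification $(X',D')$. The Fock-Goncharov conjecture together with the optimized seed hypothesis identifies $R_{(X,D)}$ with the mid cluster algebra of $U$; by the Laurent phenomenon, each theta function restricts under the open inclusion $\mathcal{T}_s^{\vee} \hookrightarrow \Spec R_{(X,D)}$ to a Laurent polynomial in the dual cluster variables $x_j^*$. After specializing the Novikov parameters along the Fano grading via $t^A \mapsto t^{-K_X \cdot A}$, the mirror function $W_D$ is homogeneous of $t$-degree one and takes the form $W_D|_{\mathcal{T}_s^{\vee}} = t \cdot W$ for a Laurent polynomial $W \in \kk[(x_1^*)^{\pm 1}, \ldots, (x_n^*)^{\pm 1}]$.

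The crucial step is the identity
\begin{equation*}
W_D^d[\vartheta_0] = t^d \cdot \operatorname{const}(W^d),
\end{equation*}
where $\operatorname{const}$ extracts the constant term of a Laurent polynomial in the $x_j^*$. Equivalently, among the theta basis $\{\vartheta_m\}$, only $\vartheta_0$ has a nonvanishing constant term when restricted to the seed torus dual to the optimized seed. I would establish this by unpacking the Gross-Hacking-Keel-Kontsevich broken-line expansion: each contributing broken line to $\vartheta_m|_{\mathcal{T}_s^{\vee}}$ outputs a monomial $z^\alpha$ whose exponent lies in $g_s(m)$ plus a non-negative integral combination of wall directions sitting in a strictly convex cone not containing the origin. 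Since $g_s(m) \neq 0$ for $m \neq 0$, no such $\alpha$ vanishes, and only $\vartheta_0$ contributes a constant term.

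Combining this identity with $\pi_{W_D} = \widehat{G}_X = \sum_d d! n_d t^d$ gives $\operatorname{const}(W^d) = d! n_d$. The standard residue formula $\frac{1}{(2\pi i)^n} \int_{(S^1)^n} z^\alpha \bigwedge_j \frac{dx_j^*}{x_j^*} = \delta_{\alpha, 0}$ then yields
\begin{equation*}
g(t) = \sum_{d \ge 0} \frac{t^d}{d!} \operatorname{const}(W^d) = \sum_{d \ge 0} n_d t^d = G_X(t),
\end{equation*}
the unregularized quantum period of $X$, which is a component of the $J$-function and hence a solution of the quantum differential equation by the standard Givental-style argument. The chief obstacle is the constant-term vanishing for $\vartheta_m$ with $m \neq 0$, which requires the broken-line analysis to exploit the optimization of the seed; the remainder of the argument is essentially formal given Theorem \ref{mthm1}.
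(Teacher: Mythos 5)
Your proposal follows essentially the same route as the paper: invoke Theorem~\ref{mthm1} to obtain $W_D$, restrict to a seed torus of the Fock--Goncharov dual to produce a Laurent polynomial, match the constant-term extraction against the $\vartheta_0$-coefficient, and run Cauchy's residue theorem. The paper packages the middle steps into a separate result (Corollary~\ref{mirclust}), which invokes \cite[Theorem D]{clustermirr} to identify the canonical wall structure of $R_{(X',D')}$, after base change, with the cluster scattering diagram for $\check{U}_{\operatorname{prin}}$, and \cite[Lemma $9.3(1)$]{bases_cluster} to get that optimized seeds make each $\vartheta_{D_i}$ a global monomial, hence a genuine Laurent polynomial (not merely a formal one) in every chart. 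You re-derive the key constant-term fact directly; this is a reasonable substitute, but two points need tightening.

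First, the claimed $t$-homogeneity ``$W_D|_{\mathcal{T}_s^\vee} = t\cdot W$'' is not justified and is in fact false as stated: the leading (scatter-free) broken-line contribution to $\vartheta_{D_i}|_{\mathcal{T}_s^\vee}$ carries trivial curve class, hence $t$-degree $0$, while wall-crossing corrections carry positive $t$-degree, so the restriction is not $t$-homogeneous. Fortunately the homogeneity is never needed: the log balancing condition makes $W_D^d[\vartheta_0]$ supported on classes with $-K_X\cdot\textbf{A}=d$, so one can simply set all Novikov parameters to $1$ (as the paper does) to obtain a Laurent polynomial $W$ with $\operatorname{const}(W^d)=d!n_d$, and insert the parameter $t$ by hand via $e^{Wt}$.

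Second, the broken-line argument for $\operatorname{const}(\vartheta_m|_{\mathcal{T}_s^\vee})=\delta_{m,0}$ is the right idea but the step ``since $g_s(m)\neq 0$, no such $\alpha$ vanishes'' is not a complete inference: $g_s(m)\neq 0$ by itself does not preclude $g_s(m)$ lying in the negative of the cone spanned by wall directions, in which case a correction term could have exponent $0$. What saves the argument is the pointedness of theta function expansions in the GHKK $\mathcal{A}_{\mathrm{prin}}$ framework: $\vartheta_m|_s = z^{g_s(m)}\cdot(1 + \text{corrections in }\kk[[y_1,\ldots,y_n]])$, so the $z$-constant term is $\delta_{m,0}$ once one works with principal coefficients and only at the end sets $y_i=1$ (using that optimized seeds make the relevant theta functions actual Laurent polynomials). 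You should make this dependence on the principal-coefficient structure explicit. Finally, a minor notational point: Theorem~\ref{mthm1} produces $W_D\in R_{(X',D')}$ for a log-smooth compactification $(X',D')$, not $R_{(X,D)}$, since $(X,D)$ itself need not be snc.
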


We also associate a full-dimensional positive subsets in the essential skeleton of $\Sigma(X')$ in the sense of  \cite{bases_cluster} to every pairing of a seed of a cluster variety and a Fano compactification. These positive subset in turn induce toric degenerations of $X$. We explicitly compute the superpotential in the case $X = \Gr(n-k,n)$, recovering the Pl\"ucker coordinate mirror discovered by Marsh and Rietsch in \cite{LGgr}, and its relationship with Newton-Okounkov bodies and toric degenerations of $X$ discovered by Rietsch and Williams in \cite{NOmirr}.

%
%
%
%
%
In the final section, we will consider the situation where $D \in |-K_X|$ is a smooth divisor. In this setting, using Theorem \ref{mthm1} together with a study of how these invariants decompose after degeneration, we find the following collections of invariants are equivalent:

\begin{theorem}[Frobenius structure theorem for smooth pairs]\label{mcr2}
Let $(X,D)$ be a pair of a Fano variety together with a smooth anticanonical divisor. Then the following collection of (log) Gromov-Witten invariants are equivalent data:

\begin{enumerate}
\item The regularized quantum periods of $X$.
\item The structure constants $N_{p,q}^{r,\textbf{A}}$ of the log mirror algebra associated to $(X,D)$. 
\item The two pointed log Gromov-Witten invariants $N_{p,q}^{\textbf{A}} := \langle [pt]_p, [1]_q\rangle_{\textbf{A},2,0}$.
\end{enumerate}


\end{theorem}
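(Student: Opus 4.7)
The plan is to prove three directional implications, using Theorem \ref{mthm1} as the bridge between the classical periods and the mirror algebra, and using degeneration to the normal cone of $D$ to translate between the different flavors of log Gromov-Witten invariants.

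For a smooth anticanonical divisor $D \subset X$, the dual cone complex of $D$ is a single ray, so the theta basis of $R_{(X,D)}$ is indexed by $\ZZ_{\ge 0}$ as $\vartheta_0, \vartheta_1, \vartheta_2, \ldots$, and the mirror superpotential of Theorem \ref{mthm1} is $W_D = \vartheta_1$ up to an appropriate base change. The specialization of Theorem \ref{mthm1} then reads $\pi_{\vartheta_1} = \widehat{G}_X$. Since $\pi_{\vartheta_1} = \sum_{d\ge 0}\vartheta_1^d[\vartheta_0]$ is computed directly from the structure constants $N_{p,q}^{r,\textbf{A}}$ by iterated multiplication in $R_{(X,D)}$, this yields the implication $(2)\Rightarrow(1)$ immediately.

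To see $(3)\Rightarrow(2)$, I would use the Frobenius structure theorem for smooth pairs of \cite{int_mirror}: each structure constant $N_{p,q}^{r,\textbf{A}}$ is identified with a $3$-pointed log GW invariant of $(X,D)$ of class $\textbf{A}$ with contact orders encoded by $(p,q,r)$, with point-class insertions at the first two marked points and a fundamental-class (or $\psi$-corrected) insertion at the third, depending on the sign of $r - p - q$. Applying the divisor/string equation together with a degeneration that separates the third marked point one rewrites each such 3-pointed invariant as a sum of products of $2$-pointed invariants of type $\langle [pt]_{p'}, [1]_{q'}\rangle_{\textbf{A}',2,0} = N_{p',q'}^{\textbf{A}'}$ with explicit rubber integrals on $\mathbb{P}(\mathcal{O}_D\oplus N_{D/X})$ determined by the contact data alone.

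For $(1)\Rightarrow(3)$, the strategy is to apply the degeneration formula to the defining integral of $n_\textbf{A}$ under the degeneration of $X$ to the normal cone of $D$, whose special fiber is $X\cup_D \mathbb{P}(\mathcal{O}_D\oplus N_{D/X})$. This expresses each $n_\textbf{A}$ as a sum over splittings of genus-$0$ tropical curves, with the side on $(X,D)$ contributing $N_{p,q}^{\textbf{A}''}$ and the side on the bubble contributing rubber integrals computable explicitly (e.g. by Maulik-Pandharipande localization or the Bryan-Pandharipande rubber calculus). Organizing the resulting identities by the tangency profile $(p,q)$ yields a triangular linear system with non-vanishing diagonal, so one can invert to recover each $N_{p,q}^{\textbf{A}}$ from $\widehat{G}_X$. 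The main obstacle is precisely this $(1)\Rightarrow(3)$ direction: one must carefully track the $\psi$-class insertion across the expanded degeneration and compute the rubber contributions explicitly enough to ensure invertibility, since both the $\psi$-class behaviour under splitting and the exact combinatorics of the rubber integrals are delicate.
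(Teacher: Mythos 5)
Your overall structure (three directional implications) is sound, and your $(2)\Rightarrow(1)$ matches the paper exactly. Your $(3)\Rightarrow(2)$ is also essentially right in conclusion: the paper cites the recursion $N_{p,q}^r = [p-r]N_{q,p-r} + [q-r]N_{p,q-r}$ for $r\neq p+q$ from the literature (\cite{Ysmooth}, \cite{GRZ1}, \cite{YouPLG}) rather than re-deriving it via rubber calculus, but the content is the same.

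Where your proposal has a genuine gap is $(1)\Rightarrow(3)$, and you correctly flag it yourself. You propose degenerating $X$ to the normal cone of $D$, writing $n_{\textbf{A}}$ as a sum of products of $N_{p,q}^{\textbf{A}''}$ with rubber integrals on $\mathbb{P}(\mathcal{O}_D\oplus N_{D/X})$, and then inverting a triangular linear system. As written, this is a plan rather than a proof: you would have to track the insertion $\psi_x^{d-2}$ through the expanded degeneration, compute the rubber contributions explicitly, and then demonstrate that the resulting system in the unknowns $N_{p,q}^{\textbf{A}}$ is in fact triangular with nonzero diagonal. None of these steps is carried out, and the second and third are not obviously true without computation.

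The paper's route is genuinely different and avoids rubber entirely. Rather than degenerating the target, it degenerates the \emph{point constraint}: the interior point $z$ is moved into $D$ as a strict log point, an operation that stays within the log Gromov--Witten theory of $(X,D)$. Using Lemma \ref{contcomp} (which forces the vertex carrying the point-constrained leg to sit at $\pmb\sigma(v)=\mathbb{R}_{>0}$ with contracted curve class) together with the $\psi$-class argument and the gluing formula of Lemma \ref{glem}, one obtains the closed-form residue identity of Proposition \ref{LGres}: $N_{p_1}\cdots N_{p_d}[t^0] = \vartheta_{p_1}\cdots\vartheta_{p_d}[\vartheta_0]$, where $N_p = t^p + \sum_i iN_{p,i}t^{-i}$. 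The $(1)\Rightarrow(3)$ direction is then finished by a formal power series argument (Lemma \ref{exercise}) and an induction that is \emph{interleaved} with the already-established $(3)\Leftrightarrow(2)$ equivalence: first $N_1$ is recovered from the periods via Lemma \ref{exercise}(1); then one uses the ring relation $\vartheta_n = \vartheta_1\vartheta_{n-1} - \sum_{j\le n-1}N_{1,n-1}^j\vartheta_j$, the residue identity, and Lemma \ref{exercise}(2) to determine $N_n$ inductively. This use of the mirror algebra's multiplicative structure is an essential ingredient absent from your proposal; trying to go directly from $\widehat{G}_X$ to the $N_{p,q}^{\textbf{A}}$ by matrix inversion bypasses the structure that makes the recovery tractable.
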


Theta function structure constants for $R_{(X,D)}$ have previously been studied by Wang in \cite{Ysmooth} in the setting $X = \mathbb{P}^2$, for Del Pezzo surfaces by Gr\"afnitz, Ruddat and Zaslow in \cite{GRZ1} by scattering techniques and by You in \cite{YouPLG} without dimension restriction using the log-orbifold correspondence of \cite{logroot2} and an investigation of relative mirror maps. Theorem \ref{mcr2} essentially follows from the latter work, although it is not stated in this form. We derive this result by our main theorem combined with a degeneration argument, as opposed to an application of a mirror theorem.

%
%

%
%
%
%

\subsection{Future Work}
Kalashnikov in \cite{LPquiv} provides many candidate Laurent mirrors to Fano varieties arising as quiver flag zero loci. Central to constructing these mirrors is a toric degeneration of the ambient quiver flag variety. It is natural to ask whether we could generalize the construction of mirrors in the case of Grassmanians to the setting of Fano quiver flag varieties, along with the corresponding collection of Newton-Okounkov bodies and corresponding toric degenerations studied by Rietsch and Williams in \cite{NOmirr}. 

Additionally, Theorem \ref{mthm1} expresses the quantum periods of a smooth Fano variety $X$ in terms of the log Gromov-Witten invariants of another log smooth birational model $(\tilde{X},D) \rightarrow X$, and the proof the Theorem \ref{mthm1} establishes that these do not depend on our choice of desingularization. These results suggest a generalized notion of quantum periods which makes sense outside the context of a smooth Fano variety, for instance Schubert varieties in flag varieties. Using this generalized notion of quantum periods, it would be interesting to understand whether the ordinary quantum periods of a Fano variety are determined by the generalized quantum periods of a singular toric variety to which the Fano degenerates. More generally, since the quantum periods are the degree $0$ component of the Givental J-function for $X$ when $X$ is smooth, another natural question is to understand whether other parts of the J-function for Fano varieties admit a description in terms of log Gromov-Witten invariants. 

\section{Acknowledgments}
I would like to thank  George Balla, Alessio Corti, Michel Van Garrel, Mark Gross, and Konstanze Rietsch for useful discussions related to this work. This work was supported by the Additional Funding Programme for Mathematical Sciences, delivered by EPSRC (EP/V521917/1) and the Heilbronn Institute for Mathematical Research



\section{Logarithmic and tropical preliminaries}

In the following, we let $\textbf{Cones}$ be the category of rational polyhedral cones, and $\textbf{RPCC}$ be the category of rational polyhedral cone complexes. Among its objects are rational polyhedral cones $\sigma$, with associated integral points $\sigma_\NN \subset \sigma$ and lattice $\sigma_\NN \subset \sigma_{\NN}^{\gp}$. 

Given a simple normal crossings pair $(X,D)$, we let $\Sigma(X)$ denote the dual cone complex of the divisor $D$. Letting $D = D_1+ \cdots +D_k$ be the decomposition of $D$ into irreducible components, the cones of $\Sigma(X)$ are indexed by strata $D_I = X \cap_{i \in I} D_i$ associated with subsets $I \subset [1,k]$, giving a cone $\sigma_I \cong \mathbb{R}_{\ge 0}^{|I|}$. These cones are glued by identifying $\sigma_I$ with a face of $\sigma_J$ if $I \subset J$. We will frequently denote $\sigma_{\emptyset}$ by $\{0\}$.

To relate the PL geometry of $\Sigma(X)$ to the algebraic geometry of $X$, we recall that the data of the pair $(X,D)$ for $D = D_1+\cdots+D_k$ is encoded in a morphism of algebraic stacks $X \rightarrow [\mathbb{A}/\mathbb{G}_m]^k = \mathcal{A}^k$, i.e. a choice of $k$ pairs of line bundles and sections. We have the isomorphisms $A^*(\mathcal{A}^k) = A^*_{\mathbb{G}_m^k}(\mathbb{A}^k) \cong \kk[x_1,\ldots,x_k]$, and the pullback of class $x_i$ to $X$ gives the line bundle $\mathcal{O}_X(D)$. Moreover, there is a closed substack $im(X) = \mathcal{X} \subset \mathcal{A}^k$. When the intersections $D_I$ are connected, we call $\mathcal{X}$ the \emph{Artin fan} for the pair $(X,D)$. For an introduction to Artin fans more broadly, we refer the reader to \cite[Section $6$]{stacktrop}. We recall the following useful lemma which relates these algebraic stacks with appropriate combinatorial objects:

\begin{lemma}[\cite{stacktrop} Theorem $6.11$]\label{equiv}
There is an equivalence of $2$-categories between the categories of Artin fans and cone stacks. On a cone $\sigma \in Ob(\textbf{RPCC})$, the equivalence is given by sending $\sigma$ to $\mathcal{A}_\sigma = [\Spec\text{ }\kk[\sigma^{\vee}_\NN]/\Spec\text{ }\kk[(\sigma^{\vee}_\NN)^{\gp}]$. 
\end{lemma}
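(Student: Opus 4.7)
The plan is to build the equivalence in two stages: first on individual cones and the basic toric stacks $\mathcal{A}_\sigma$, then extend by 2-colimits to cone stacks and Artin fans. First I would verify functoriality on $\textbf{Cones}$: a cone morphism $\sigma \to \tau$ dualizes to a monoid map $\tau^{\vee}_\NN \to \sigma^{\vee}_\NN$, which induces a torus-equivariant morphism $\Spec \kk[\sigma^{\vee}_\NN] \to \Spec \kk[\tau^{\vee}_\NN]$ and hence a morphism $\mathcal{A}_\sigma \to \mathcal{A}_\tau$ of quotient stacks. The key local fact is that when $\tau \subset \sigma$ is a face, the resulting map $\mathcal{A}_\tau \to \mathcal{A}_\sigma$ is the open immersion obtained by inverting the monomials supported away from $\tau^{\vee}$; this makes the face relations on the cone side correspond to open gluings on the stack side.

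Next I would extend to all of $\textbf{RPCC}$. A cone stack is by construction a 2-colimit of cones along face embeddings, and the 2-category of Artin fans carries the corresponding 2-colimits along open immersions, so the functor $\sigma \mapsto \mathcal{A}_\sigma$ extends uniquely to a 2-functor between 2-categories. For essential surjectivity, one uses the structural property that every Artin fan is étale locally of the form $\mathcal{A}_\sigma$; assembling these local cones together with the face identifications induced by the étale gluing produces a cone stack that maps isomorphically to the given Artin fan.

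For full faithfulness, the main step is identifying the $\Hom$-groupoid $\Hom(\mathcal{A}_\sigma,\mathcal{A}_\tau)$ with $\Hom(\sigma,\tau)$. Here the modular interpretation is essential: $\mathcal{A}_\sigma$ represents the stack of fine saturated log structures with characteristic monoid $\sigma^{\vee}_\NN$, so a morphism $\mathcal{A}_\sigma \to \mathcal{A}_\tau$ is the same data as a morphism of monoids $\tau^{\vee}_\NN \to \sigma^{\vee}_\NN$, i.e. a cone morphism $\sigma \to \tau$. Propagating this identification through the 2-colimit presentations upgrades it to full faithfulness on arbitrary cone stacks and Artin fans.

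The main obstacle I anticipate is the careful bookkeeping of 2-morphisms. The 2-category of cone stacks has nontrivial 2-arrows coming from automorphisms of cones in the gluing data, and these must be matched with natural transformations between morphisms of stacks on the Artin fan side; automorphisms of the torus factor in the quotient $[\Spec \kk[\sigma^{\vee}_\NN]/\Spec \kk[(\sigma^{\vee}_\NN)^{\gp}]]$ have to correspond precisely to the lattice automorphisms of $\sigma$. Organizing the proof around the modular description of $\mathcal{A}_\sigma$ as a moduli stack of log structures makes this matching tautological, which is the perspective taken in \cite{stacktrop}.
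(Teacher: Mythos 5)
This statement is cited directly from \cite{stacktrop} (Theorem 6.11 of Cavalieri--Chan--Ulirsch--Wise); the present paper does not reprove it, so there is no in-paper argument to compare against. That said, your sketch follows essentially the same architecture as the cited source: establish the duality on the local models $\sigma \mapsto \mathcal{A}_\sigma$, observe that face inclusions correspond to open immersions of quotient stacks, extend along $2$-colimit presentations using the fact that every Artin fan admits an \'etale cover by stacks of the form $\mathcal{A}_\sigma$, and appeal to the modular interpretation of $\mathcal{A}_\sigma$ (as an open substack of Olsson's stack of log structures) to obtain full faithfulness. One small imprecision worth flagging: $\mathcal{A}_\sigma$ does not quite parametrize log structures \emph{with} characteristic monoid $\sigma^\vee_\NN$, but rather log structures that \'etale locally admit a chart by $\sigma^\vee_\NN$; the characteristic monoid can degenerate along the strata. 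This distinction matters when you try to make the $\Hom$-groupoid identification tautological, since maps into $\mathcal{A}_\sigma$ are really maps into the Artin cone which strictify the log structure, not an equality of characteristic sheaves. You are also right to single out the bookkeeping of $2$-morphisms as the delicate point: cone stacks (unlike cone complexes) carry genuine stacky automorphism data, and the matching with automorphisms of $\mathcal{A}_\sigma$ is precisely what forces one to work $2$-categorically rather than with ordinary colimits.
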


In particular, under the equivalence above, we have $\mathcal{X}$ is associated with the dual cone complex $\Sigma(X)$ above, and $[\mathbb{A}^1/\mathbb{G}_m]$ is associated with $\mathbb{R}_{\ge 0}$. Moreover, given a piecewise linear function $\rho: \Sigma(X) \rightarrow \mathbb{R}_{\ge 0}$, we have an associated morphism $\mathcal{X} \rightarrow [\mathbb{A}^1/\mathbb{G}_m]$, and after precomposing with $X \rightarrow \mathcal{X}$, we find that $\rho$ induces a Cartier divisor supported on $D$, with associated line bundle denoted by $\mathcal{O}(\rho)$. Conversely, given any Cartier divisor supported on $D$, one can construct a piecewise linear function which pulls back to the given Cartier divisor. In particular, for every component $D_i$ of $D$, we have an associated piecewise linear function $ \Sigma(X) \rightarrow \mathbb{R}_{\ge 0}$, which we also refer to as $D_i$. By restricting to a cone $\sigma \in \Sigma(X)$ and taking the induced morphism on associated groups, we also have the induced linear map $D_i: \sigma^{\gp} \rightarrow \mathbb{R}$.

Along with the tropicalization of the target $\Sigma(X)$ introduced above, we also recall the tropical analogue of prestable curves and maps:

\begin{definition}
An abstract tropical curve over a cone $\sigma \in \textbf{Cones}$ is the data $(G,\textbf{g},l)$ with
\begin{enumerate}
\item $G = G_\sigma$ a graph with legs, with set of vertices, edges and legs denoted by $V(G_\sigma)$, $E(G_\sigma)$ and $L(G_\sigma)$ respectively.
\item $\textbf{g}: V(G) \rightarrow \NN$ a function called the genus function.
\item $l: E(G) \rightarrow Hom(\sigma_\NN,\NN)\setminus 0$ and assignment of edge lengths. 
\end{enumerate}
\end{definition}

The above data naturally determines a cone complex $\Gamma_\sigma$ and a morphism $\pi: \Gamma_\sigma \rightarrow \sigma$ such that for $p \in \sigma$, we have $\pi^{-1}(p)$ is a metrization of $G_\sigma$ with the length of an edge $e \in E(G_\tau)$ given by $l(e)(p) \in \RR_{\ge 0}$. With this local model, it is straightforward to extend the definition of a family of tropical curves over a cone $\sigma$ to a family of tropical curves over a cone complex $\Sigma$.

%
%
%
%
%
%
%
%

With the tropical domains determined, we now define a tropical map with target $\Sigma(X)$ to simply be a morphism in \textbf{RPCC} from a tropical curve $\Gamma_\sigma$ over a cone $\sigma$ to $\Sigma(X)$. We note that any tropical map determines a \emph{tropical type}, recalled below:

\begin{definition}
Given a tropical map $\Gamma_\sigma \rightarrow \Sigma(X)$ over a cone $\sigma$, we define a tropical type $\tau$ to be the tuple $(G,\textbf{g},\pmb\sigma,\textbf{u})$ where:
\begin{enumerate}
\item $(G,\textbf{g})$ is a graph with vertices decorated by natural numbers underlying $\Gamma_\sigma$.
\item $\pmb\sigma: V(G)\cup E(G) \cup L(G) \rightarrow \Sigma(X)$ assigns a feature of $G$ the smallest cone of the complex $\Sigma(X)$ which the cone associated to the feature maps into.
\item $\textbf{u}(e) \in \pmb{\sigma}(e)^{\gp}_{\NN}$ for $e \in E(G) \cup L(G)$ is the integral tangent vector given by the image of the tangent vector $(0,1) \in \sigma^{\gp}_{e,\NN} = \sigma^{\gp}_\NN \oplus \mathbb{Z}$ under the map $h: \Gamma_\sigma \rightarrow \Sigma(X)$.
\end{enumerate}
Any tuple $(G,\textbf{g},\pmb\sigma,\textbf{u})$ with the data appearing above is called a tropical type. A family of tropical maps $\Gamma_{\sigma} \rightarrow \Sigma(X)$ over a cone $\sigma$ is marked by the tropical type $\tau$ if there is a face $\omega \subset \sigma$ such that $\Gamma_{\sigma}|_{\omega} \rightarrow \Sigma(X)$ is a family of tropical maps of type $\tau$. A decorated tropical type $\pmb\tau$ is a tropical type $\tau$ with the additional data of a function $\textbf{A}: V(G) \rightarrow M$ with $M$ a monoid of curve classes. 
\end{definition}

As we will be working in genus $0$, we will suppress the genus marking in a tropical type from now on. Given a tropical type $(G,\pmb\sigma,\textbf{u})$, the existence of an associated basic monoid shown in \cite[Proposition $2.32$]{punc} shows that there exists a cone $\tau$ and a tropical map $\Gamma_\tau \rightarrow \Sigma(X)$ with the given tropical type such that any family of tropical maps $\Gamma_\sigma \rightarrow \Sigma(X)$ of the given type over a cone $\sigma$ is induced via pullback from a morphism of cones $\sigma \rightarrow \tau$. The cone $\tau$ is thus a moduli space of tropical maps, and  we will abuse notation throughout to refer to the type and the cone by the same name $\tau$.

\subsection{Log Gromov-Witten Theory}

Beyond ordinary Gromov-Witten theory of a smooth projective variety $X$, we will additionally be interested understanding the enumerative geometry of pairs $(X,D)$ for $D \subset X$ a simple normal crossings compactification of $U:= X\setminus D$. In the present work, we will probe the enumerative geometry of such a pair via log Gromov-Witten theory of simple normal crossings compactifications $(X,D)$. Developed by Abramovich, Chen, Gross and Siebert in \cite{mtodf1},\cite{mtodf2},\cite{logGW},\cite{punc}, log Gromov-Witten theory is a logarithmic enhancement of classical Gromov-Witten theory, which provides a compactification of the moduli problems classifying stable  maps to $X$ with prescribed orders of tangency with the divisor $D$.

 A particular feature of the theory is the presence of a tropicalization functor. For any prestable log curve $C/S$, there is a family of tropical curve $\Gamma_{\Sigma(S)}$ over the tropicalization of $S$, $\Sigma(S)$. Moreover, given a family of log stable map $f: C/S \rightarrow X$, tropicalization yields a tropical map $\Gamma_{\Sigma(S)} \rightarrow \Sigma(X)$. 

As in ordinary Gromov-Witten theory, to produce finite type moduli stacks over which we integrate to produce invariants, we impose discrete data on our moduli problems. These include the standard data of degree and genus of the maps. Additionally, since the maps we are considering have log structure, we may further impose discrete data coming from the tropicalization of a log stable maps. In particular, given a tropical map $\Gamma_{\tau} \rightarrow \Sigma(X)$, we have an associated tropical type, and a corresponding moduli stack $\mathscr{M}(X,\tau)$ of punctured log maps $C/S \rightarrow X$ marked by $\tau$. Furthermore, by decorating the vertices of $G_\tau$ by a monoid $H_2^+(X)$ of effective curve classes, \cite[Section $2.2.2$]{punc} introduces the moduli stack of punctured log stable maps marked by $\pmb\tau$ to $X$, denoted by:
 \[\mathscr{M}(X,\pmb\tau).\]
In addition, for the purpose of defining a virtual class for $\mathscr{M}(X,\pmb\tau)$, assuming in addition a leg $l \in L(G_{\tau})$, we also consider the moduli stacks of prestable log curves marked by a tropical type $\tau$ or a decorated tropical type $\pmb\tau$ to the Artin fan $\mathcal{X}$ together with a point $z \in \pmb\sigma(l)$, denoted respectively by:
 \[\mathfrak{M}^{\ev}(\mathcal{X},\tau),\mathfrak{M}^{\ev}(\mathcal{X},\pmb\tau).\]

Fixing the discrete data as above, the stacks $\mathscr{M}(X,\pmb\tau)$, $\mathfrak{M}^{\ev}(\mathcal{X},\tau)$ and $\mathfrak{M}^{\ev}(\mathcal{X},\pmb\tau)$ all carry log structures, $\mathscr{M}(X,\pmb\tau)$ is proper and Deligne-Mumford, and $\mathfrak{M}^{\ev}(\mathcal{X},\tau)$ and $\mathfrak{M}(\mathcal{X},\pmb\tau)$ are idealized log smooth and equidimensional. Finally, the forgetful morphism $\mathscr{M}(X,\pmb\tau) \rightarrow \mathfrak{M}^{\ev}(\mathcal{X},\tau)$ carries a relative perfect obstruction theories, and virtual pullback in the sense of \cite{vpull} of the fundamental class $[\mathfrak{M}(\mathcal{X},\tau)]$ yield a virtual fundamental class $[\mathscr{M}(X,\pmb\tau)]^{\vir} \in A_*(\mathscr{M}(X,\pmb\tau))$.

Log Gromov-Witten invariants in turn are defined as in ordinary Gromov-Witten theory; there exist evaluation maps on underlying stacks $\mathscr{M}(X,\pmb\tau) \rightarrow X_{\pmb\sigma(l)}$ associated with legs $l \in L(G_\tau)$, and by pulling back classes along these evaluation maps, we produce log Gromov-Witten invariants by integrating against the virtual fundamental class. Analogous to the classical story, we may additionally consider descendent log Gromov-Witten invariants, in which the integrand may include insertions coming from the tautological $\psi$ classes, i.e. the first Chern classes of the tautological cotangent line associated with a marked point. 

An important constraint on the tropicalization of log stable maps is the logarithmic balancing condition, recalled below:

\begin{proposition}
Given a log stable map $C \rightarrow X$ with curve class $\textbf{A} \in \NE(X)$ of tropical type $\tau$, and $L$ a line bundle on $X$ corresponding to a PL function on $\Sigma(X)$:

\[\textbf{A}\cdot L = \sum_{l \in L(G_{\tau})} L(\textbf{u}(l))\]

\end{proposition}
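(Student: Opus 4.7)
The plan is to reduce the statement to a local computation on each irreducible component of $C$, followed by a cancellation along the internal edges of $G_\tau$. First, I would identify $\mathbf{A}\cdot L$ with $\deg f^*\mathcal{O}(L)$, where $\mathcal{O}(L)$ is the line bundle on $X$ associated to the PL function $L$ as recalled in the paragraph following Lemma \ref{equiv}. Decomposing the normalization of $C$ into irreducible components indexed by vertices $v \in V(G_\tau)$, this degree splits as
\[\mathbf{A}\cdot L \;=\; \sum_{v \in V(G_\tau)} \deg \bigl(f|_{C_v}\bigr)^*\mathcal{O}(L).\]

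Second, for each vertex $v$, I would establish a local formula expressing the contribution in tropical terms. The component $C_v$ maps to the closed stratum of $X$ labelled by $\pmb\sigma(v)$, and the pullback $(f|_{C_v})^*\mathcal{O}(L)$ can be represented as a Cartier divisor supported on the special points of $C_v$, namely the nodes and markings corresponding to edges and legs of $G_\tau$ incident to $v$. Unwinding the definition of $\mathbf{u}(e)$ as the image under the tropicalization $h: \Gamma_\sigma \to \Sigma(X)$ of the tangent vector $(0,1) \in \sigma^{\gp}_{e,\NN}$, one sees that the order of vanishing of this pullback at the special point corresponding to a feature $e$ incident to $v$ is precisely $L(\mathbf{u}(v,e))$, where $\mathbf{u}(v,e) \in \pmb\sigma(e)^{\gp}_{\NN}$ denotes the tangent vector along $e$ pointing away from $v$. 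Since $L$ is linear on the cone $\pmb\sigma(e)$, this evaluation is unambiguous. The resulting local identity reads
\[\deg \bigl(f|_{C_v}\bigr)^*\mathcal{O}(L) \;=\; \sum_{e \text{ incident to } v} L(\mathbf{u}(v,e)),\]
where the sum ranges over both edges and legs of $G_\tau$ incident to $v$.

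Summing over all $v \in V(G_\tau)$, every internal edge $e$ joining vertices $v_1$ and $v_2$ contributes $L(\mathbf{u}(v_1,e)) + L(\mathbf{u}(v_2,e))$ to the total. Because $\mathbf{u}(v_1,e)$ and $\mathbf{u}(v_2,e)$ are opposite elements of $\pmb\sigma(e)^{\gp}_{\NN}$ and $L$ is linear on $\pmb\sigma(e)$, these two terms cancel. Only the leg contributions survive, yielding the stated formula $\mathbf{A}\cdot L = \sum_{l \in L(G_\tau)} L(\mathbf{u}(l))$. The main obstacle I expect is the second step: establishing the local formula for $\deg (f|_{C_v})^*\mathcal{O}(L)$ in terms of tropical data. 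Although this is standard in the log Gromov-Witten literature, it requires carefully tracing through the identification of the PL function $L$ with a Cartier divisor via Lemma \ref{equiv}, pulling this divisor back through the log morphism $f$, and matching the resulting orders of vanishing at special points of $C_v$ with the prescribed tangent vectors $\mathbf{u}(e)$. Once this local formula is secured, the cancellation along internal edges is a purely formal consequence of linearity of $L$ on each cone.
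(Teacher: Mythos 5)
The paper does not prove this proposition --- it is stated as ``recalled below,'' i.e.\ as a standard fact from the log Gromov--Witten literature (it is essentially the balancing statement going back to Gross--Siebert). So there is no proof in the paper to compare against; what follows is an assessment of your argument on its own terms.

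Your overall structure is the standard one and is correct: split $\deg f^*\mathcal{O}(L)$ over components of the normalization, prove a local degree formula at each vertex $v$, and cancel along interior edges using linearity of $L$ on $\pmb\sigma(e)$ (with $L$ evaluated on $\mathbf{u}(v_1,e)$ and $\mathbf{u}(v_2,e)$ via the same linear extension of $L|_{\pmb\sigma(e)}$ to $\pmb\sigma(e)^{\gp}$, so the two terms are genuinely negatives of each other). The one place you should be more careful is precisely the step you flag, and your sketch of how to handle it is slightly off. You propose to realize $(f|_{C_v})^*\mathcal{O}(L)$ as a Cartier divisor supported on the special points by pulling back the divisor of $L$ on $X$. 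That works only when $\pmb\sigma(v)=\{0\}$: if $C_v$ maps into a boundary stratum $X_{\pmb\sigma(v)}$, the canonical section of $\mathcal{O}(L)$ vanishes identically there, so its naive pullback to $C_v$ is the zero section and gives no divisor at all, even though $(f|_{C_v})^*\mathcal{O}(L)$ is a perfectly good nontrivial line bundle. The correct mechanism goes through the log structure rather than the scheme-theoretic section: $L$ is a global section of $\overline{\mathcal{M}}_X$, its pullback $f^{\flat}(L)$ is a section of $\overline{\mathcal{M}}_C$ corresponding to $L\circ\Sigma(f)$ on $\Sigma(C)$, and the standard degree formula for line bundles associated to sections of $\overline{\mathcal{M}}_C^{\gp}$ on a log curve over a log point expresses $\deg\mathcal{O}_{f^\flat(L)}|_{C_v}$ as the sum of outgoing slopes of that PL function at $v$, which are exactly the $L(\mathbf{u}(v,e))$. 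One also needs the (standard) observation that a log morphism cannot map a non-special point of $C_v$ into a deeper stratum than $X_{\pmb\sigma(v)}$, so no contributions are hiding away from nodes and markings. With that replacement your plan goes through; the cancellation step at interior edges and the residual sum over legs are correct as you wrote them.
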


Finally, we recall that if $\tau$ contains a leg $l$, then for $z \in X_{\pmb\sigma(l)}$, we can form the point constrained moduli stack $\mathscr{M}(X,\pmb\tau,z)$. When $z \in X\setminus D$, this stack is simply $\mathscr{M}(X,\pmb\tau)\times_{X_{\pmb\sigma(l)}} z$. For the virtual class, $\mathscr{M}(X,\pmb\tau,z) \rightarrow \mathfrak{M}^{\ev}(\mathcal{X},\tau,z) := \mathfrak{M}^{\ev}(\mathcal{X},\tau)\times_{X_{\pmb\sigma(l)}} z$ is also equipped with a perfect obstruction theory via pullback, and virtual pullback of the fundamental class of the codomain as done previously yields a virtual class $[\mathscr{M}(X,\pmb\tau,z)]^{\vir} \in A_*(\mathscr{M}(X,\pmb\tau,z))$. Denoting by $vdim\text{ }\mathscr{M}$ the virtual dimension of a moduli stack $\mathscr{M}$, we have:
\[vdim\text{ }\mathscr{M}(X,\pmb\tau,z) = vdim\text{ }\mathscr{M}(X,\pmb\tau) - dim\text{ }X_{\pmb\sigma(l)}.\]

\section{Intrinsic Mirror Symmetry for log Calabi-Yau pairs}
A large class of targets in the algebraic setting for which mirror symmetry is studied are log Calabi-Yau varieties, which following \cite{int_mirror} are defined as follows:

\begin{definition}
A log Calabi-Yau variety $U$ is a quasiprojective variety which admits a compactification $U \subset Y$ in a projective variety with $D= D_1 +\ldots + D_k =Y\setminus U$ a normal crossings divisor such that $K_Y + D = \sum_i a_iD_i$ with $a_i \ge 0$. We call such pairs $(Y,D)$ log Calabi-Yau pairs.
\end{definition}

In this paper, we will be working with Fano varieties $X$ with a reduced anticanonical divisor $D \in |-K_X|$, with $U$ log Calabi-Yau in the above sense. Moreover, we show in the following that any log resolution $(X',D') \rightarrow (X,D)$ of such a pair is log Calabi-Yau:

\begin{lemma}\label{logres}
If $U$ admits a log Calabi-Yau compactification $(Y,D'')$, then any snc compactification $(X',D')$ of $U$ is a log Calabi-Yau pair.
\end{lemma}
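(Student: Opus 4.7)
The plan is to produce a common snc log resolution of $(Y,D'')$ and $(X',D')$ that is an isomorphism over $U$, show the resolution is itself a log Calabi-Yau pair, and then push the effectivity relation forward to $X'$. Concretely, I would take $W$ to be a desingularisation of the closure of the graph of the birational map $X' \dashrightarrow Y$, further blown up so that $D_W := W \setminus U$ is simple normal crossings, and let $\pi : W \to Y$ and $\rho : W \to X'$ denote the two projections. Both $\pi$ and $\rho$ are then proper birational morphisms that are isomorphisms over $U$, and I can further arrange that each is a log resolution of the pair it covers.

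The first key step is to show $(W,D_W)$ is log Calabi-Yau, i.e.\ $K_W + D_W = \sum_E c_E E$ with $c_E \ge 0$ on the components of $D_W$. The definition of log Calabi-Yau pair forces $Y$ to be smooth along $D''$ with $D''$ simple normal crossings there, so $(Y,D'')$ is log canonical in a neighbourhood of $D''$. Decomposing $D_W = \pi^{-1}_* D'' + \sum_E E$ with $E$ ranging over $\pi$-exceptional divisors (all of which lie in $D_W$ since $\pi$ is iso over $U$), the log discrepancy formula reads
\[
K_W + D_W \;=\; \pi^{*}(K_Y + D'') \,+\, \sum_E a(E;Y,D'')\, E,
\]
with each $a(E;Y,D'') \ge 0$ by log canonicity. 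Combining with the hypothesis $K_Y + D'' = \sum_i a_i D''_i$, $a_i \ge 0$, and expanding $\pi^{*} D''_i = \pi^{-1}_* D''_i + \sum_E m_{E,i} E$ with $m_{E,i} \ge 0$, I would collect terms to display $K_W + D_W$ as an explicit nonnegative combination of the components of $D_W$.

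The second step is to push this relation down via $\rho : W \to X'$. Since $\rho$ is a birational morphism between smooth varieties that is iso over $U$, with matched choices of canonical divisors one has $\rho_* K_W = K_{X'}$, while $\rho_* D_W = D'$ because the non-$\rho$-exceptional components of $D_W$ are precisely the strict transforms of the $D'_j$ and push forward to them, while the exceptional components push to zero. Applying $\rho_*$ to the formula from Step 1 then yields $K_{X'} + D' = \sum_j b_j D'_j$ with $b_j \ge 0$, which is the desired log Calabi-Yau property. The main obstacle I anticipate is really just Step 1: once one is confident that the convention for ``normal crossings'' in the definition of log Calabi-Yau pair forces local log smoothness of $(Y,D'')$, so that the log canonicity and hence the discrepancy computation are available, the remainder is routine pushforward of cycle classes.
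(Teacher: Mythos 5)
Your proposal is correct, but it takes a genuinely different route from the paper's proof. The paper invokes weak factorization (\cite[Theorem $0.3.1$]{weakfact}) to obtain a zigzag of smooth blowups and blowdowns connecting $(X',D')$ and $(Y,D'')$, and then verifies by induction that the log Calabi-Yau property propagates one step at a time: along a blowup it uses the blowup formula for the canonical divisor (which bounds the coefficient of the exceptional divisor below by $0$ because at most $\mathrm{codim}(Z)$ boundary components can contain a smooth center $Z$ having normal crossings with $D$), and along a blowdown it pushes forward a meromorphic volume form with at worst log poles. You instead take a single common log resolution $W$ (a ``roof'' rather than a zigzag), apply the log discrepancy formula for $\pi:W\to Y$ to conclude $(W,D_W)$ is log Calabi-Yau in one shot, and then push forward along $\rho:W\to X'$. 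The two arguments overlap substantially --- the paper's blowup formula is exactly the discrepancy computation for a single smooth blowup, and its blowdown step is the same pushforward you use in your Step~3 --- but your approach replaces the weak factorization theorem with Hironaka's resolution (to construct $W$), which is a lighter hypothesis, and it compresses the induction into a single application of the discrepancy formula. The one point you correctly flagged and should keep explicit in a final write-up: the paper's definition of log Calabi-Yau requires $D''$ to be a normal crossings divisor, which forces $Y$ to be smooth along $D''$ and hence $(Y,D'')$ to be log canonical there; since every $\pi$-exceptional divisor has center inside $D''$ (as $\pi$ is an isomorphism over $U$), all the relevant log discrepancies $a(E;Y,D'')$ are indeed nonnegative. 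You should also note that $K_Y+D''$ is $\mathbb{Q}$-Cartier (automatic here since $U=X'\setminus D'$ is smooth, hence $Y$ is smooth), which is needed for $\pi^*$ to make sense.
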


\begin{proof}
By weak factorization of birational maps i.e. \cite[Theorem $0.3.1$]{weakfact}, there is a sequence of blowups and blowdowns along smooth centers relating $(X',D')$ and $(Y,D'')$:

\[\begin{tikzcd}
(X',D') = (X_0,D_0) & (X_1,D_1)\arrow{l} \arrow{r}& (X_2,D_2) \cdots  &(X_{m-1},D_{m-1})\arrow{l}\arrow{r}& (Y,D'')
\end{tikzcd}.\]

Moreover, the centers of the blowup with base $(X,D_i)$ has normal crossings with $D_i$, and $D_i$ has at worst simple normal crossings singularities at each stage. By the blowup formula for the canonical divisor, if $(X_i,D_i)$ is log Calabi-Yau and $(X_{i+1},D_{i+1}) \rightarrow (X_i,D_i)$ is a blowup in the sequence above, then $(X_{i+1},D_{i+1})$ is also log Calabi-Yau. On the other hand, if we have $(X_i,D_i)\rightarrow (X_{i+1},D_{i+1})$ instead, then a meromorphic volume form on $(X_i,D_i)$ with at worst simple poles along each boundary component will have at worst simple poles along the strict transform of each component of $D_{i+1}$. It follows that $(X_{i+1},D_{i+1})$ is also a log Calabi-Yau pair as well. Induction then shows that $(X_i,D_i)$ is log Calabi-Yau for all $i$, which in particular gives the desired result. 
\end{proof}
Hence, by passing to an embedded resolution of singularities of $D \subset X$, we produce a simple normal crossings compactification of $U$, $(X',D')$, which is a log Calabi-Yau pair.
 
 Associated to the normal crossings pair $(X',D')$ is the dual intersection cone complex $\Sigma(X')$. An important subcomplex to consider when studying $X'$ as compactification of $U$ is the essential skeleton $B$:
 
 \begin{definition}
 The essential skeleton $B(U) = B \subset \Sigma(X')$ is a sub cone complex whose cones are given by $\sigma_I \in \Sigma(X')$ for $I \subset \{1,\ldots k\}$ such that $X'\cap_{i\in I} D_i'$ is non-empty and $a_i = 0$ for all $i \in I$. We call the divisors associated with the $1$-dimensional cones of $B$ \emph{good divisors}.
 \end{definition}

While a priori the topological space $B$ defined above depends on a choice of snc compactification of $U$, it turns out to depend only on $U$. Indeed, we can identify $U$ with a ``skeleton" of the Berkovich analytic space $U^{\an}$, with $U$ thought of as a variety over the non-archimedean field $\kk$ equipped with the trivial valuation. The volume form $\omega \in \Gamma(U,K_U)$ induces an upper semicontinuous function $|\omega|: U^{\an} \rightarrow \mathbb{R}_{\ge 0}$ via Temkin's Kahler seminorm, see \cite[Section $8$]{Kmet}  for details. The locus on which this function is maximized can be identified with the essential skeleton of $B$ defined above for a given choice of normal crossings compactification, see \cite{k3affine} and \cite{NX16}. In addition, given a birational morphism $Y' \rightarrow Y$ which identify a shared open log Calabi-Yau $U$, and $D$ a prime divisor contained in $Y\setminus U$, the resulting map $Y^{'an} \rightarrow Y^{\an}$ induces an isomorphism $B' \rightarrow B$ which maps the divisorial valuation associated with the strict transform of the divisor $D$ to the divisorial valuation $\nu_D$. We record this fact in the following proposition:

\begin{proposition}
The underlying topological space and set of integral points of the cone complex $B \subset \Sigma(X)$ for a simple normal crossings compactification $U \subset X$ depends only on $U$.
\end{proposition}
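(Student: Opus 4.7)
The plan is to identify $B$ with an intrinsic invariant of $U$ extracted from its Berkovich analytification, as already sketched in the discussion preceding the proposition. The key input from \cite{k3affine} and \cite{NX16} is that the essential skeleton of any snc compactification of $U$ coincides with the locus in $U^{\an}$ on which Temkin's Kahler weight function attains its maximum.

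First I would regard $U$ as a variety over $\kk$ equipped with the trivial valuation and form the Berkovich analytification $U^{\an}$. Since $U$ is log Calabi-Yau, $\Gamma(U,K_U)$ is spanned by a nowhere-vanishing form $\omega$ (unique up to scalar), and Temkin's Kahler seminorm produces an upper semicontinuous weight function $|\omega|: U^{\an} \rightarrow \RR_{\ge 0}$. Both $U^{\an}$ and $|\omega|$ are constructed purely in terms of $U$, with no reference to any compactification.

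Next, fixing an snc compactification $(X,D)$ with $D = \sum_i D_i$ and $K_X + D = \sum_i a_i D_i$, I would embed the underlying topological space of $B$ into $U^{\an}$ by sending each cone $\sigma_I \in B$ (corresponding to a nonempty stratum of $D$ along which $a_i = 0$ for all $i \in I$) to the cone of monomial valuations in local coordinates adapted to that stratum. The theorem of \cite{k3affine}, \cite{NX16} then asserts that this embedding is a homeomorphism onto the maximum locus of $|\omega|$ and identifies the integral points of $B$ with the divisorial valuations lying in this locus. Since both $U^{\an}$ and $|\omega|$ depend only on $U$, the same holds for the maximum locus and its set of divisorial points, yielding the proposition.

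The main obstacle lies in the precise matching between the PL structure of $B$ coming from the snc model and Temkin's analytic invariant: one must verify that the vanishing of the discrepancy coefficients $a_i$ is exactly the condition for a monomial valuation supported on a stratum to realize the maximum value of $|\omega|$, and that every divisorial valuation in the maximum locus arises from a good divisor on some snc model. This content is precisely what the cited references supply, so after invoking them the proof is essentially formal. A more hands-on alternative, avoiding Berkovich methods, would use weak factorization as in Lemma \ref{logres} to reduce to comparing essential skeletons across a single blowup along a smooth center with normal crossings to the boundary, and verify by a local toric computation that such a blowup induces a canonical homeomorphism of essential skeletons preserving the set of integral points.
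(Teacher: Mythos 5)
Your proposal follows the same route the paper takes: regard $U$ as a variety over $\kk$ with trivial valuation, invoke Temkin's K\"ahler seminorm to produce the weight function $|\omega|$ on $U^{\an}$, and identify $B$ with its maximum locus via \cite{k3affine} and \cite{NX16}, which is intrinsic to $U$. The weak-factorization alternative you mention is also consistent with how the paper reduces birational comparisons (cf.\ Lemma \ref{logres}), so this is essentially the paper's argument.
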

By the proposition above, we will implicitly identify the essential skeletons for different compactifications throughout. We observe that $B \subset \Sigma(Y)$ admits an integral structure, and we denote by $B(\NN)$ the integral points of $B$.

In the mirror constructions of Gross and Siebert in \cite{int_mirror} and Keel and Yu in \cite{archmirror}, the starting data is a choice of an snc compactification $U \subset X$. Such a compactification is a log smooth variety, and hence the enumerative geometry of the pair $(X,D)$ can be studied using log Gromov-Witten theory. In \cite{int_mirror}, Gross and Siebert define an algebra $R_{(X,D)}$ defined over a monoid ring $\kk[Q]$ of curve classes in $X$ with underlying free $\kk[Q]$ module structure $R_{(X,D)} = \oplus_{p \in B(\NN)} \kk[Q]\vartheta_p$. We refer to the free basis generating $R_{(X,D)}$ as a $\kk[Q]$ module the \emph{theta basis}. This algebra is defined using structure constants for the product in the theta basis. Given $p_1,\ldots,p_m ,r \in B(\NN)$ and $\textbf{A} \in Q$, we denote by $N_{p_1,\ldots,p_m}^{r,\textbf{A}}$ the coefficient of $\vartheta_rz^\textbf{A}$ in the theta function expansion of the product $\vartheta_{p_1}\cdots\vartheta_{p_m}$. 

The structure constants $N_{p_1,\ldots,p_m}^{0,\textbf{A}}$ will play a central role; when $U$ is affine and $D$ contains a zero stratum, they turn out to be certain naive curve counts considered by Keel and Yu considered in \cite{archmirror}. To recall these naive curve counts, we consider the space of maps $H(X,p_1,\ldots,p_m,\textbf{A})$ consisting of maps $f: (\mathbb{P}^1,x_1,\ldots,x_m,x_{\out}) \rightarrow X$ which intersect the boundary at finitely many points with contact order at a marked points $x_i$ determined by $p_i$, and $f_*[\mathbb{P}^1] = \textbf{A}$. There is a forgetful morphism $\Phi_{(p_i),\textbf{A}}: H(X,(p_i),\textbf{A})\rightarrow \overline{\mathscr{M}}_{0,m+1}\times X$ defined by $\Phi_{(p_i),\textbf{A}}((C,f)) = (C,f(x_{\out}))$. By \cite[Theorem $3.12$]{archmirror}, this map is finite \'etale over a open subset of the codomain, and we define:

\[\eta(p_1,\ldots,p_m,\textbf{A}) = deg(\Phi_{(p_i),\textbf{A}}).\]

When $D$ is the support of a nef divisor, contains a zero stratum and $U$ is affine, the following theorem from \cite{mirrcomp} relates the structure constants in the mirror algebra to the naive curve counts above:

\begin{theorem}[Theorem $1.1$ \cite{mirrcomp}]\label{enum}
For $(X,D)$ as above, with $p_1,\ldots,p_m \in B(\NN)$ and $\textbf{A} \in \NE(X)$, we have:

\[N_{p_1,\ldots p_m}^{0,\textbf{A}}= \eta(p_1,\ldots,p_m,\textbf{A})\]

\end{theorem}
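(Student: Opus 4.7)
The plan is to identify the intrinsic mirror structure constant $N_{p_1,\ldots,p_m}^{0,\textbf{A}}$, defined via a punctured log Gromov-Witten integral, with the naive \'etale degree $\eta(p_1,\ldots,p_m,\textbf{A})$. First I would unpack the definition of $N_{p_1,\ldots,p_m}^{0,\textbf{A}}$ from \cite{int_mirror}: it is extracted from a logarithmic integral over the virtual class $[\mathscr{M}(X,\pmb\tau,z)]^{\vir}$, where $\pmb\tau$ is a decorated tropical type with $m$ input legs carrying tangency data $p_i$ at the corresponding boundary strata, one output punctured leg with trivial contact order (encoding the $\vartheta_0$ coefficient), total curve class $\textbf{A}$, and the output leg constrained to a point $z$ taken generic in the dense open $U$ (which exists since $D$ contains a zero stratum).

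Next I would argue that only one decorated tropical type contributes for $z$ generic: the type $\pmb\tau_0$ whose source graph is a single genus-zero vertex carrying the entire class $\textbf{A}$, with $m+1$ legs---$m$ legs with tangency vectors $p_i$ mapping to the respective boundary cones and the output leg mapping to $\{0\}\in B$. Any other type marking a nontrivial stratum of the moduli space must have either a vertex $v$ with $\pmb\sigma(v) = \{0\}$ distinct from the output vertex, or a nontrivial edge decoration. Using affineness of $U$, any component of an underlying stable map lying in $U$ must be contracted, and the generic single-point constraint at $z$ is incompatible with multiple contracted components. Logarithmic balancing, combined with nefness of the PL function cut out by $\sum D_i$, then constrains the possible edge vectors and class distributions, and a virtual dimension count shows contributions from degenerate types carry strictly smaller virtual dimension than the conditions imposed and therefore vanish in the integral (or are absorbed under the pushforward via a splitting/decomposition formula for log Gromov-Witten invariants as in \cite{punc}).

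Having reduced to $\pmb\tau_0$, the moduli space $\mathscr{M}(X,\pmb\tau_0,z)$ parametrizes genuine stable maps $f:\mathbb{P}^1 \rightarrow X$ in class $\textbf{A}$ meeting $D$ at prescribed tangency orders $p_i$ and with an interior marking landing at $z$. On the open locus of smooth maps the log structure is trivial away from the tangency loci, the relative perfect obstruction theory of $\mathscr{M}(X,\pmb\tau_0,z) \rightarrow \mathfrak{M}^{\ev}(\mathcal{X},\tau_0,z)$ has no excess, and $[\mathscr{M}(X,\pmb\tau_0,z)]^{\vir}$ coincides with its usual fundamental class. Identifying this space with the fiber of $\Phi_{(p_i),\textbf{A}}: H(X,(p_i),\textbf{A}) \rightarrow \overline{\mathscr{M}}_{0,m+1}\times X$ over a generic point $(C_0,z)$ and invoking \cite[Theorem $3.12$]{archmirror}, the log Gromov-Witten invariant evaluates to $\deg(\Phi_{(p_i),\textbf{A}}) = \eta(p_1,\ldots,p_m,\textbf{A})$.

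The main obstacle is the reduction to the single contributing type $\pmb\tau_0$. This step requires a uniform control over all decorated tropical types marking the moduli space, involving both combinatorial constraints from the balancing condition and the geometric input from affineness of $U$ and nefness of $D$. In particular, one must show that degenerate types---those with nontrivial edges or contracted boundary components---either have virtual dimension strictly smaller than the imposed $m$-fold tangency and single point constraints, or that their contributions cancel pairwise after applying the decomposition/gluing formula for log Gromov-Witten invariants. Handling the punctured output leg rigorously (as opposed to a genuine marked point) is the most delicate technical point, since punctured contact data is allowed to be non-positive and can a priori open up extra degenerate strata that must be controlled.
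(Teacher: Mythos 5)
The paper does not actually prove Theorem~\ref{enum}: it is imported verbatim as Theorem~$1.1$ of \cite{mirrcomp} (the author's own earlier work) and no proof is reproduced here. So there is no internal proof to compare against in the strict sense. However, the paper does prove a close variant, Lemma~\ref{primnaive}, whose proof explicitly borrows steps from the proof of \cite[Theorem~1.1]{mirrcomp}, and it illuminates where your sketch diverges from the actual argument.

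Two substantive issues with your proposal. First, you describe $N_{p_1,\ldots,p_m}^{0,\textbf{A}}$ as ``extracted from a logarithmic integral over $[\mathscr{M}(X,\pmb\tau,z)]^{\vir}$'' for a single type $\pmb\tau$, but the Gross--Siebert definition of the structure constant does not present itself this way; the conversion to a single descendent log Gromov--Witten integral
\[
\int_{[\mathscr{M}(X,\textbf{P},\textbf{A})]^{\vir}} \psi_{x_{\out}}^{m-2}\,\ev_{x_{\out}}^*([\pt])
\]
is a nontrivial input, namely the TRR-type result of Proposition~\ref{logTRR} (from \cite{logWDVV}). Your sketch silently assumes this reduction, which is a genuine missing step. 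Second, your suggestion that degenerate tropical types ``cancel pairwise after applying the decomposition/gluing formula'' is not how the argument runs and would likely fail. The correct mechanism, visible in the proof of Lemma~\ref{primnaive}, is that degenerate types are \emph{ruled out} individually: the generic point constraint forces the $x_{\out}$-component to survive stabilization, $\overline{\psi}_{x_{\out}}^{m-2}=[\pt]$ on $\overline{\mathscr{M}}_{0,m+1}$ forces the domain to stabilize to a $\mathbb{P}^1$ with general marked points, and then cutting $G_\tau$ at any non-contracted edge containing $v_{\out}$ produces a subcurve with at most two legs whose degree is pinned down by log balancing against the nef pullback of $D$; when $D$ contains a zero stratum this forces the subcurve to be contracted, violating stability. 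No pairwise cancellation occurs; the degenerate loci are simply empty. Your invocation of ``virtual dimension counts'' is also not the decisive tool here, since the virtual dimension is already correct for the full moduli space and the issue is ruling out contributions from deeper strata.

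Your final step identifying the remaining moduli space with the fiber of $\Phi_{(p_i),\textbf{A}}$ and invoking \cite[Theorem~3.12]{archmirror} is in the right spirit, but it skips the verification that the log structure is trivial and the obstruction theory unobstructed on this locus, which requires showing the maps are transverse to $D$ and avoid the deeper strata; this again comes from the genericity of $z$ combined with the analysis above, not from a dimension count alone.
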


In addition, with only the assumption that $(X,D)$ is log Calabi-Yau, it is shown in \cite{logWDVV}, that we have the following descendent log Gromov-Witten description of the $\vartheta_0$ structure constants of $R_{(X,D)}$:

\begin{proposition}\label{logTRR}
For $m \ge 2$ and $\vartheta_{p_1},\ldots,\vartheta_{p_m}$ theta functions in $R_{(X,D)}$, then letting $\textbf{P}$ be the contact order vector $(p_1,\ldots,p_m,0)$ we have:
\[N_{p_1,\ldots p_m}^{0,\textbf{A}}= \int_{[\mathscr{M}(X,\textbf{P},A)]^{\vir}} \psi_{x_{\out}}^{m-2}ev_{x_{\out}}^*([pt])\]

\end{proposition}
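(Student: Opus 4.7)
The plan is to proceed by induction on $m \ge 2$, combining the logarithmic topological recursion relation on $\overline{\mathscr{M}}_{0,m+1}$ with the log splitting formula and associativity of $R_{(X,D)}$.

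For the base case $m = 2$, the descendent factor $\psi_{x_{\out}}^{m-2}$ equals $1$, and the right-hand side is the two-pointed log Gromov-Witten invariant with incoming contact orders $p_1, p_2$ and the output marking (of contact order $0$, hence in $U$) constrained to a point of $X$. This is precisely the Gross-Siebert definition of the binary product structure constant $N_{p_1,p_2}^{0,\textbf{A}}$ in $R_{(X,D)}$ from \cite{int_mirror}.

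For the inductive step, fix two incoming markings $x_1, x_2$ and apply TRR on $\overline{\mathscr{M}}_{0,m+1}$ to express
\[\psi_{x_{\out}} = \sum_{\substack{S \ni x_{\out} \\ x_1, x_2 \notin S,\ |S|,|S^c|\ge 2}} [D(S, S^c)].\]
Since the cotangent line at $x_{\out}$ pulls back to the cotangent line on the $S$-side component, multiplying by $\psi_{x_{\out}}^{m-3}$ and using a dimension count on $\overline{\mathscr{M}}_{0,|S|+1}$ forces $|S| = m - 1$, so only the boundary with $S^c = \{x_1, x_2\}$ contributes. The logarithmic splitting formula at this boundary then yields
\[\int_{[\mathscr{M}(X,\textbf{P},\textbf{A})]^{\vir}} \psi_{x_{\out}}^{m-2} \ev_{x_{\out}}^*([pt]) = \sum_{q,\ \textbf{A}_1+\textbf{A}_2=\textbf{A}} N_{p_1,p_2}^{q,\textbf{A}_2} \cdot \int_{[\mathscr{M}(X,(q,p_3,\ldots,p_m,0),\textbf{A}_1)]^{\vir}} \psi_{x_{\out}}^{m-3} \ev_{x_{\out}}^*([pt]),\]
the sum being over contact orders $q \in B(\NN)$ at the gluing node. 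By the inductive hypothesis, the second factor equals $N_{q,p_3,\ldots,p_m}^{0,\textbf{A}_1}$, so the right-hand side is the $\vartheta_0$-coefficient of $(\vartheta_{p_1}\vartheta_{p_2}) \cdot \vartheta_{p_3}\cdots\vartheta_{p_m}$, which by associativity of $R_{(X,D)}$ equals $N_{p_1,\ldots,p_m}^{0,\textbf{A}}$.

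The main obstacle is establishing the precise form of the logarithmic splitting formula in the punctured setting, and in particular identifying the three-pointed factor over $S^c$ (with incoming orders $p_1, p_2$ and a node of contact order $q$) with the Gross-Siebert structure constant $N_{p_1,p_2}^{q,\textbf{A}_2}$ defined via punctured log maps. This requires careful bookkeeping of tropical types, basic monoids, and gluing multiplicities as in \cite{punc}, all of which are packaged in the logarithmic WDVV framework of \cite{logWDVV}. Granted this matching, the inductive recursion closes and the proposition follows.
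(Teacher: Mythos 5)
The paper does not actually prove this proposition; it cites \cite{logWDVV} (later invoked as Theorem~1.3 of that reference in the proof of Proposition~\ref{LGres}) and uses it as a black box. Your inductive TRR-plus-log-splitting sketch is very likely a reconstruction of the cited argument, and the architecture --- peel off two inputs via the genus-zero topological recursion relation, match the resulting two-step product against the $\vartheta_0$-coefficient of $(\vartheta_{p_1}\vartheta_{p_2})\vartheta_{p_3}\cdots\vartheta_{p_m}$ via associativity of $R_{(X,D)}$ --- is sound. The base case identification for $m=2$ is also correct: for $r=0$ the Gross--Siebert structure constant is literally a two-input, one-output log invariant with the output constrained to a generic point.

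Two steps, however, are not as automatic as your writeup suggests. First, your dimension count forcing $|S|=m-1$ applies to $\overline{\psi}_{x_{\out}}$ on $\overline{\mathscr{M}}_{0,m+1}$, not to $\psi_{x_{\out}}$ on $\mathscr{M}(X,\textbf{P},\textbf{A})$; the two differ by a correction supported on the locus where the component carrying $x_{\out}$ is destabilized. Before TRR can even be applied one must establish that, in the presence of the generic point constraint $\ev_{x_{\out}}^*([\pt])$, one may replace $\psi_{x_{\out}}$ by $\Forget^*\overline{\psi}_{x_{\out}}$ at the level of cycles --- the same replacement used explicitly in the proofs of Lemma~\ref{primnaive} and Theorem~\ref{mthm1}, and it cannot be omitted here. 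Second, the ``main obstacle'' you flag is not mere bookkeeping but is the substance of the cited theorem: $\Forget^*[D(S,S^c)]$ decomposes into a union of tropical strata with multiplicities, the node on the $S^c$-side must be treated as a \emph{negative} contact order (so the $S^c$-factor is a genuinely punctured invariant), and matching this with $N_{p_1,p_2}^{q,\textbf{A}_2}$ requires the punctured degeneration/gluing machinery of \cite{punc}. As written, your proposal defers exactly that content to the same source the paper already cites --- a fair reading of where the proposition comes from, but not yet a self-contained proof.
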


A combination of the previous two results in the more restrictive setting of Theorem \ref{enum} immediately yields the enumerativity of certain structure constants in $R_{(X,D)}$. A slight generalization of this fact will be useful in a degeneration analysis which will appear in the next section, see Lemma \ref{primnaive}.

\section{Mirrors to Fano pairs (X,D)}

Letting $(X,D)$ be a pair of a Fano variety and $D \in |-K_X|$ satisfying the conditions of Theorem \ref{mthm1}, we fix a choice of log resolution $(X',D') \rightarrow (X,D)$ which by a combination of the conditions of Theorem \ref{mthm1} together with Lemma \ref{logres} we know is log Calabi-Yau. The pair $(X',D')$ will be used in the proof of Theorem \ref{mthm1}. By pulling back the Cartier divisor $D \subset X$ to $X'$, we produce an effective Cartier divisor with support $D'$. In particular, as described in the remarks following Lemma \ref{equiv}, the associated line bundle is induced by a piecewise linear function on $\Sigma(X')$, which we also denote by $D$. 

Recall now that for $z \in X\setminus D$, we may form the moduli stack $\mathscr{M}(X',\beta,z) = \mathscr{M}(X',\beta)\times_{X'} z$ of log stable maps of marked by $\beta$ which map $x_{\out}$ to $z$, equipped with the virtual class $[\mathscr{M}(X',\beta,z)]^{\vir} = z^!([\mathscr{M}(X',\beta)]^{\vir})$. Using this stack we note the following slight extension of Theorem \ref{enum} which removes the assumption of the maximal boundary in certain special cases:

\begin{lemma}\label{primnaive}
Letting $(X,D),(X',D')$ be pairs as above, and for a multiset of points $p_1,\ldots,p_m \in \Sigma(X')(\NN)$ given by primitive points along $1$-dimensional cones of $\Sigma(X')$ associated with strict transforms of components of $D$, consider the type $\beta= ((0,p_1,\ldots,p_m),\textbf{A})$, with $x_{\out}$ denoting the contact order $0$ marked point. Then for $z \in X\setminus D = X' \setminus D'$ general, $[\mathscr{M}(X',\beta,z)]^{\vir}\cap (\psi_{x_{\out}}^{m-2})$ is represented by the zero cycle given by the finitely many curves contributing to $\eta(p_1,\ldots,p_m,\textbf{A})$. In particular, we have:
\begin{equation}\label{ncount}
\vartheta_{p_1}\cdots\vartheta_{p_m}[\vartheta_0z^{\textbf{A}}] = \eta(p_1,\ldots,p_m,\textbf{A}).
\end{equation}
\end{lemma}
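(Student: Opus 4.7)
The plan is to combine Proposition \ref{logTRR} with a generic-smoothness and dimension-count argument to identify $[\mathscr{M}(X',\beta,z)]^{\vir} \cap \psi_{x_{\out}}^{m-2}$ with the naive enumeration $\eta(p_1,\ldots,p_m,\textbf{A})$. Since $(X',D')$ is log Calabi-Yau by Lemma \ref{logres}, Proposition \ref{logTRR} applies, and compatibility of the virtual class with the fiber of $\ev_{x_{\out}}$ over a general $z \in X'\setminus D'$ rewrites the theta structure constant as
\[\vartheta_{p_1}\cdots\vartheta_{p_m}[\vartheta_0 z^{\textbf{A}}] = \int_{[\mathscr{M}(X',\beta,z)]^{\vir}} \psi_{x_{\out}}^{m-2}.\]
So it suffices to prove the representability claim; the displayed ``In particular'' equation then follows.

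Next, I would analyze the tropical strata of $\mathscr{M}(X',\textbf{P},\textbf{A})$ for $\textbf{P}=(p_1,\ldots,p_m,0)$. The type $\beta$ has a single genus-zero vertex of class $\textbf{A}$ carrying all $m+1$ legs, and any other tropical type $\tau'$ that specialises to $\beta$ has additional vertices and bounded edges. Since the leg $x_{\out}$ has trivial contact vector, the vertex $v_0$ of $\tau'$ carrying $x_{\out}$ must satisfy $\pmb\sigma(v_0) = \sigma_\emptyset$. Combined with the log balancing condition from Section $3$ and the fact that each $p_i$ is primitive along the $1$-cone associated with the strict transform of a component of $D$, this forces any such $\tau' \neq \beta$ to move some positive fraction of the curve class $\textbf{A}$ off of $v_0$ onto vertices mapping to higher-dimensional cones of $\Sigma(X')$. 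A virtual dimension calculation for the corresponding stratum in $\mathscr{M}(X',\textbf{P},\textbf{A})$, following \cite{punc}, then shows that the evaluation at $x_{\out}$ from this stratum has image of positive codimension in $X'\setminus D'$, so is avoided by general $z$. I expect this codimension estimate to be the main technical obstacle: it has to rule out, in particular, reducible configurations with bubbles running into exceptional divisors of the resolution, which is where both the primitivity of the $p_i$ along good $1$-cones and the snc hypothesis on $(X',D')$ are essential.

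Having excluded degenerate strata, $\mathscr{M}(X',\beta,z)$ consists of honest log stable maps $f\colon \mathbb{P}^1 \to X'$ with $f(x_{\out})=z$ meeting $D'$ only at the $x_i$ with primitive contact order $p_i$. Since the $p_i$ correspond to strict transforms of components of $D$, pushing forward along $X'\to X$ identifies each such map with an element of $H(X,p_1,\ldots,p_m,\textbf{A})$ sending $x_{\out}$ to $z$, with inverse given by taking strict transforms. By \cite[Theorem 3.12]{archmirror}, the forgetful map $\Phi_{(p_i),\textbf{A}}$ is finite \'etale over a dense open set, so for general $z$ the moduli $\mathscr{M}(X',\beta,z)$ is reduced of expected dimension $m-2$, and its virtual class coincides with its fundamental class.

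Finally, the map $\mathscr{M}(X',\beta,z) \to \overline{\mathscr{M}}_{0,m+1}$ recording only the marked domain curve factors through $\Phi_{(p_i),\textbf{A}}^{-1}(\overline{\mathscr{M}}_{0,m+1}\times\{z\})$ and is finite \'etale of degree $\eta(p_1,\ldots,p_m,\textbf{A})$. Using $\int_{\overline{\mathscr{M}}_{0,m+1}} \psi_{x_{\out}}^{m-2} = 1$ and the compatibility of $\psi$-classes under this forgetful map, we conclude
\[\int_{[\mathscr{M}(X',\beta,z)]} \psi_{x_{\out}}^{m-2} = \eta(p_1,\ldots,p_m,\textbf{A}),\]
completing the argument.
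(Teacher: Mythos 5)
Your reduction via Proposition \ref{logTRR} and the final passage through \cite[Theorem 3.12]{archmirror} match the paper's outer structure, but the middle step is where your argument has a genuine gap. You propose to rule out degenerate tropical strata by a virtual dimension count showing that evaluation at $x_{\out}$ from any non-trivial stratum has positive codimension. This is exactly the argument used in \cite{mirrcomp} and \cite{archmirror}, and it crucially relies on the hypotheses that $U$ is affine and that $D$ contains a zero stratum (maximal boundary). Lemma \ref{primnaive} is explicitly introduced as an extension to settings where the zero-stratum hypothesis is dropped; if the dimension count alone worked, there would be nothing to prove. In log Calabi--Yau situations without maximal boundary, strata with components mapping into $D'$ need not be of positive codimension after point constraint, so genericity of $z$ does not suffice to exclude them.

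The paper's actual proof is different in kind: it shows directly that no component of a contributing map can lie in the boundary, by a balancing argument against the pullback of the \emph{ample} divisor $D \subset X$ to $X'$. The key inputs, which your sketch never uses, are: (i) the pulled-back PL function $D$ on $\Sigma(X')$ is induced by a nef divisor, so $D\cdot \textbf{A}' \ge 0$ for any effective class; (ii) each $p_i$ is primitive along a strict transform of a component of $D$, so $D(p_i) = 1$; (iii) at any non-contracted edge $e$ leaving $v_{\out}$, one has $D(\textbf{u}(e)) < 0$. Feeding these into the log balancing condition forces $D\cdot\textbf{A}' = 0$ and $D(\textbf{u}(e)) = -1$ for the cut-off subcurve, hence transversal intersection with $D'$; combining this with \cite[Lemma 3.9]{archmirror} to push the contact point into the isomorphism locus of $D' \to D$, one gets a stability contradiction. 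Your acknowledgement that ``this codimension estimate to be the main technical obstacle'' is accurate, but the resolution is not a sharper dimension estimate — it is a qualitatively different degree/balancing argument using ampleness of $D$ on the original $X$, which your proposal does not invoke.
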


\begin{proof}
Since $(X',D')$ is log Calabi-Yau, the intrinsic mirror algebra $R_{(X',D')}$ exists. By Proposition \ref{logTRR}, together with a straightforward comparison of the obstruction theories for $\mathscr{M}(X',\beta,z)$ and $\mathscr{M}(X',\beta)$, the left hand side of Equation \ref{ncount} is the log Gromov-Witten invariant:
\[N_{p_1,\ldots,p_m}^{0,\textbf{A}}  = \int_{[\mathscr{M}(X',\beta,z)]^{\vir}} \psi_{x_{\out}}^{m-2}\]
We wish to show this invariant is equal to $\eta(p_1,\ldots,p_m,\textbf{A})$. To do this, we first show that no component of any log stable map $f:C \rightarrow X'$ of type $\beta$ potentially contributing to the invariant above maps into the boundary. Tropically, this means that for $\tau$ the tropical type of a map $f$ as above, every compact edge of $G_\tau$ is contracted by $\Sigma(f)$ to $0 \in \Sigma(X)$. The desired equality would then follow as in the final section of the proof of \cite[Theorem $1.1$]{mirrcomp}.  

%
%

Let $v_{\out}$ be the vertex contained in the contracted leg associated with $x_{\out}$. As in Step $2$ of proof of  \cite[Theorem $1.1$]{mirrcomp}, the generic point constraint ensures that the component containing $x_{\out}$ is not contracted by stabilization of the domain curve, and we may replace the cohomology class $\psi_{x_{\out}}$ appearing the definition of $N_{p_1,\ldots,p_m}^{0,\textbf{A}}$ with $\Forget^*\overline{\psi}_{x_{\out}}$, where $\Forget: \mathscr{M}(X',\beta,z) \rightarrow \overline{\mathscr{M}}_{0,m+1}$ is the stabilization map. Since $\overline{\psi}_{x_{\out}}^{m-2}  = [pt] \in A^{m-2}(\overline{\mathscr{M}}_{0,m+1})$, a map contributes to $N_{p_1,\ldots,p_m}^{0,\textbf{A}}$ only if the domain stabilizes to $\mathbb{P}^1$ with a general configuration of points, with the component $C_{v_{\out}}$ associated with $v_{\out}$ not contracted by the stabilization morphism. Hence $C_{v_{\out}}$ contains $m$ special points other than $x_{\out}$. Moreover, the proof of \cite[Lemma $3.11$]{archmirror} also applies in our context to show that $C_{v_{\out}}$ does not share a contact order $0$ node with a component meeting the curve at only one point. It follows that the $m$ remaining special points must be either legs or non-contracted edges.




Now let $e$ be a non-contracted edge containing $v_{\out}$, with associated node $q_e \in C_{v_{\out}}$, and $G'$ be the tropical curve produced by cutting $G_\tau$ at the edge $e$ and taking the resulting connected component not containing $v_{\out}$, which exists since $G_\tau$ is genus $0$. By the established constraint on the stabilization of the domain curve, $G'$ can have at most two legs, the leg associated with the cut edge $e$, and another leg $l_i$ inherited from the graph $G_\tau$. Since the edge $e$ contained a vertex mapping to $0 \in \Sigma(X')$, the contact order of the leg of $G'$ associated with $e$ must satisfy $D_i(\textbf{u}(e)) \le 0$ for all $i$, and $D_j(\textbf{u}(e)) < 0$ for some $j$. In particular, $D(\textbf{u}(e)) < 0$. 

If $G'$ contained no leg inherited from $G_\tau$, then letting $C_{G'} \subset C$ be the punctured log curve given by restricting the log structure to the subcurve associated with $G'$, and $\textbf{A}'$ the curve class of $f': C_{G'} \rightarrow X'$, we have by the log balancing condition
\[D \cdot \textbf{A}' = \sum_i a_iD_i(\textbf{u}(e)) < 0.\]
But $D$ is an ample divisor on $X$, hence its pullback is nef on $X'$, so this would give a contradiction.

 Hence, $G'$ must have a leg inherited from $G_{\tau}$, with contact order $p_i$. Since $D(p_i) = 1$ for all $i$ by assumption, by the log balancing condition and the fact that $D(\textbf{u}(e))$ is a negative integer, the curve class $\textbf{A}'$ must satisfy $D\cdot \textbf{A}' \le 0$. Since $D$ is ample, the resulting map $C' \rightarrow X$ must be contracted, hence in particular $D \cdot \textbf{A}' = 0$. By the balancing condition for punctured log curve $C'$ associated with the tropical curve $G'$, we must have 
 \[D(\textbf{u}(e)) = - D(p_i) = -1.\]
Since an analogous equality holds for all non-contracted edges containing $v_{\out}$, we have $f(C_{v_{\out}})$ intersects $D'$ transversely. Now let $D'_i \subset D'$ be the irreducible component associated with the contact order $p_i$, which by assumption is the strict transform of a component $D_i \subset D$. Note that the complement in $D_i'$ of the isomorphism locus of $D' \rightarrow D$ is codimension at least $2$ in $X'$. By \cite[Lemma $3.9$]{archmirror}, after ensuring we have chosen $f(x_{\out}) = z \in X'$ sufficiently general, we can ensure that $f(q_e) \in D_i'$ is contained in the isomorphism locus of $D' \rightarrow D$ for any given choice of curve class $\textbf{A}'$. By varying over the finitely many curve classes which have degree less than the degree of $\textbf{A}$ with respect to some polarization of $X$, this can be ensured across the entire moduli stack. Since $C' \rightarrow X$ was shown to be a contraction, it follows that the map $C' \rightarrow X'$ must also contract $C'$. But $C'$ has only two marked points, so this would contradict stability. Thus, $v_{\out}$ can only be contained in legs of $G_{\tau}$. In particular, since the graph $G_{\tau}$ is connected, no vertex of $G_{\tau}$ can map into the boundary, as desired.

\end{proof}

Having described the types of curves which contribute to the trace form assuming primitive inputs, we now define $W_D \in R_{(X',D')}$ referred to in the statement of Theorem \ref{mthm1}, and proceed to its proof.

\begin{definition}
Letting $D = \sum_i D_i$, note that the divisorial valuation for each component $D_i$ gives an integral point in $B(U)$. Since $B(U) \subset \Sigma(X')$ for any snc compactification $(X',D')$ of $U$, we have a theta function $\vartheta_{D_i}$ associated with each component $D_i$, and we define:

\begin{equation}
W = W_D = \sum_i \vartheta_{D_i} \in R_{(X',D')}.
\end{equation}
\end{definition}

We now proceed to the proof of Theorem \ref{mthm1}. The main idea is that the quantum periods of a smooth Fano variety $X$ turn out to depend only on the naive curve counts which govern the trace form of the mirror algebra, which in turn is a naive count of curves contained in an open subscheme $V\subset X$ with $V$ a partial compactification of an affine log Calabi-Yau $U \subset Y$ and $V\setminus U$ a smooth divisor.

%
%
%
%
%
%
%

\begin{proof}[Proof of Theorem \ref{mthm1}]
We first reduce to the case that $(X',D')$ is an embedded resolution of singularities of $D \subset X$. We note that we have an analogous element $W_D \in R_{(X',D')}$ for any snc compactification of $U$. We now claim that the sequence $\pi_{W_D}$ appearing in the statement of Theorem \ref{mthm1} is independent of the choice of dominating compactification of $(X,D)$. To see this, consider a compactification $(\tilde{X},\tilde{D})$ of $U$ which dominates $(X,D)$, and let $p_{\tilde{X}}: \kk[\NE(\tilde{X})] \rightarrow \kk[\NE(X)]$ be the morphism of monoid rings induced by proper pushforward map. By Lemma \ref{primnaive}, we have that the log Gromov-Witten invariants contributing to $p_{\tilde{X}}(W_D^d[\vartheta_0])\in \kk[\NE(X)]$ are naive curve counts $\eta(p_1,\ldots,p_k,\textbf{A})$. Since the previous curve counts are independent of the choice of compactification by definition, the desired independence of $\pi_{W_D}$ on the choice of dominating compactification follows.

 Hence, it suffices to prove the equality of Theorem \ref{mthm1} in the case that $(X',D')$ is an embedded resolution of singularities of $(X,D)$. Hence, we write $X' = X_m$ as an iterated blowup of $X = X_0$, with the $i^{th}$ center $Z_i \subset X_{i-1}$ transverse to the exceptional locus of the birational morphism $X_{i-1} \rightarrow X$. 

We now consider a degeneration of $X$ constructed by a sequence of blowups centered on the special fiber of the trivial family $X \times\mathbb{A}^1$ which contains a main component isomorphic to $X'$. More precisely, after $i-1$ blowups, we have the corresponding birational morphism $(X\times \mathbb{A}^1)_{i-1} \rightarrow X \times \mathbb{A}^1$, and we consider the strict transform of the divisor $X \times 0$, giving a closed immersion $X_{i-1} \rightarrow (X\times \mathbb{A}^1)_{i-1}$. We identify $Z_i \subset X_{i-1}$ with a closed subvariety of $(X\times \mathbb{A}^1)_{i-1}$, and we blow up $Z_i$ to produce $(X\times \mathbb{A}^1)_{i}$. Since the centers $Z_i$ are transverse to the exceptional divisors of previous blow ups, it follows that the divisor over $0 \in \mathbb{A}^1$ has simple normal crossings, hence the total space is log smooth. The strict transform of the central fiber of the trivial degeneration is isomorphic to $X'$, which we call the main component of the special fiber. Finally, we blowup the strict transform of $D \times 0 \subset X \times \mathbb{A}^1$, which we note is a smooth divisor of the main component of the special fiber. Letting $\mathscr{X}$ denote the total space resulting from these blowups, we have a map $\mathscr{X} \rightarrow \mathbb{A}^1$ whose general fiber is $X$. Since the strict transform of $D\times 0$ intersects the exceptional locus transversely, it follows that the divisor over $0$ still has normal crossings after this additional blowup. Therefore, $\mathscr{X}$ is log smooth. Since the map $\mathscr{X} \rightarrow \mathbb{A}^1$ is \'etale locally induced by a choice of section of $\mathcal{M}_{\mathscr{X}}$, the family is log smooth by Kato's criterion for log smoothness. 


The main result of \cite{decomp} now allow us to compute our desired invariant on the general fiber via a log Gromov-Witten calculation on the special fiber. More precisely, letting $\textbf{A}\cdot c_1(X) = d$, we have:

\begin{equation}\label{degform}
n_\textbf{A} = \sum_{\pmb\tau}\frac{m_{\pmb\tau}}{|Aut(\pmb\tau)|}N_{\pmb\tau} := \sum_{\pmb\tau}\frac{m_{\pmb\tau}}{|Aut(\pmb\tau)|} \int_{[\mathscr{M}(\mathscr{X}_0,\pmb\tau)]^{\vir}} \psi_{x_{\out}}^{d-2}\ev^*([pt]).
\end{equation}

The above sum varies over all decorated rigid tropical types $\pmb\tau$ of log maps to the special fiber such that $\sum_{v\in V(G_\tau)} \pi_*(\textbf{A}(v)) = \textbf{A}$.  In what follows, we let $\pmb\tau$ be a rigid tropical type with non-trivial contribution to $n_\textbf{A}$. First, we note by cutting along all edges of $\tau$, we produce a tropical type $\tau_v$ for every vertex $v \in V(G_\tau)$. Let $v_0$ denote the unique vertex contained in the contracted leg associated with the marked point $x_{\out}$. We have a corresponding cutting morphism $\mathscr{M}(\mathscr{X}_0,\pmb\tau) \rightarrow \mathscr{M}(\mathscr{X}_0,\pmb\tau_{v_0})$, and the point and $\psi$ class both pullback from the corresponding class of the codomain. Moreover, $[pt] \in H^*(X)$ pulls back to the class $[pt] \in H^*(X')$ which in turn pulls back to $0$ for any closure of stratum of $X'$ not equal to $X'$. It follows then that the point constraint forces any contributing type $\pmb\tau$ to have a vertex $v_0$ mapping to the ray of $\Sigma(\mathscr{X}_0)$ associated with the main component of the special fiber.

 We now restrict our attention to $\pmb\tau_{v_0}$, which we will refer to as $\pmb\tau_0$. By \cite[Theorem $6.1$]{trglue}, we may identify the resulting space of punctured log maps with the space of stable log maps $\mathscr{M}(X',D',\textbf{P})$, where $\textbf{P} = (p_i)$ is a vector of non-negative contact orders with $D' \subset X'$. Observe that the underlying family of stable map associated with $\mathscr{M}(\mathscr{X}_0,\pmb\tau_0) \rightarrow \mathscr{M}(X',D',\textbf{P})$ is the underlying stable map of the universal family. In particular, the class $\psi_{x_{\out}}$ pulls back from the space of log maps. Similarly, the point constraints pulls back from the space of log maps. Moreover, as in the proof of \cite[Lemma $3.11$]{archmirror}, the generic point constraint ensures that the component containing the constrained point meets the boundary more than once. Hence, for $\pmb\tau$ to contribute, $v_0$ must be contained in at least two non-contracted edges. In particular, $\psi_{x_{\out}}$ pulls back from the class $\overline{\psi}_{x_{\out}} \in A^1(\overline{\mathscr{M}}_{0,\val(v_0)})$ along the stabilization map.

Since $dim\text{ }\overline{\mathscr{M}}_{0,\val(v_0)} = \val(v_0) - 3$, the integrand $\psi_{x_{\out}}^{d -2}\ev^*([pt])$ on the space of log stable maps to $(X',D')$ vanishes if $\val(v_0) < d+1$. To further restrict $\val(v_0)$, we pullback the ample divisor $D \subset X$ to $X'$ to produce an effective nef divisor with support $D' \subset X'$. In particular, we have $D'(\textbf{u}(l_i)) > 0$ for all non-contracted $l_i \in L(G_{\tau_{0}})$. If $\val(v_0) > d+1$, then the anticanonical degree of $\pmb\sigma(v_0)$ would be greater than $d$. Since the pullback of the anticanonical Cartier divisor is nef on $X'$, the total degree of $\pmb\tau$ would be greater than $d$, contradicting our starting numerical data. Thus, $\val(v_0) = d+1$. By similar reasoning, we also have $D'(\textbf{u}(l_i)) = 1$ for all legs $l_i \in L(G_{\tau_{0}})$. This latter condition can only be the case if $l_i$ corresponds to a primitive point in $\Sigma(X')$ associated with the strict transform of a component $D_i \subset D$. In particular, note that these integral points are contained in the essential skeleton $B \subset \Sigma(X')$.

\begin{figure}[h]
\centering
\begin{tikzpicture}
\draw[-] (0,0)--(-2,-2);
\draw[-] (0,0)--(2,0);
\draw[-] (0,0)--(0,2);
\draw[-] (-2,-2)--(2,0);
\draw[-] (-2,-2)--(0,2);
\draw[-] (2,0)--(0,2);
\fill[white!60!blue, path fading = south] (0,0)--(-2,-2)--(2,0)--cycle;
\fill[white!60!blue, path fading = north] (0,0)--(-2,-2)--(0,2)--cycle;
\fill[white!60!blue, path fading = east] (0,0)--(2,0)--(0,2)--cycle;
\draw[ball color = red] (0,0) circle (0.1cm);
\draw[ball color = green] (-2,-2) circle (0.1cm);
\draw[ball color = green] (2,0) circle (0.1cm);
\draw[ball color = green] (0,2) circle (0.1cm);
\draw[ball color = blue] (1,1) circle (0.1cm);
\draw[ball color = blue] (0,-1) circle (0.1cm);
\draw[ball color = blue] (-1,0) circle (0.1cm);
\draw[-, color = red] (0,.1)--(-2,-1.9);
\draw[-, color = red] (0,0)--(0,.1);
\draw[-, color = red] (-2,-2)--(-2,-1.9);
\draw[-, color = red] (0,0)--(0,-.1);
\draw[-,color = red] (0,-.1)--(-2,-2.1);
\draw[-,color = red] (-2,-2)--(-2,-2.1);
\draw[-, color = red] (0,0)--(.1,0);
\draw[-, color = red] (.1,0)--(.1,2);
\draw[-, color = red] (.1,2)--(0,2);
\draw[-, color = red] (0,0)--(-.1,0);
\draw[-, color = red] (-.1,0)--(-.1,2);
\draw[-, color = red] (-.1,2)--(0,2);
\draw[-, color = red] (0,0)--(0,.1);
\draw[-, color = red] (0,.1)--(2,.1);
\draw[-, color = red] (2,.1)--(2,0);
\draw[-, color = red] (0,0)--(0,-.1);
\draw[-, color = red] (0,-.1)--(2,-.1);
\draw[-, color = red] (2,-.1)--(2,0);
\end{tikzpicture}
\caption{The dual complex of the special fiber for the degeneration in the case $(X,D) = (\mathbb{P}^2,V(xyz))$, with a contributing tropical type depicted. The unique vertex mapping to the main component is colored red, while the components mapping to projective bundles over components of $D$ are colored green. Vertices colored blue correspond to irrelevant components with respect to a contributing type $\tau$.}
\end{figure}

 In order for $\pmb\tau$ to be a tropical type contributing to $N_\textbf{A}$, any edge of $G_\tau$ containing $v_0$ must have another vertex $v \in V(G_\tau)$, which maps to the convex hull of the vertices of the dual complex of $\mathscr{X}_0$ associated with the main component and a vertex associated with an additional component of the special fiber. This additional component of the special fiber is a projective bundle $\mathcal{P}_i$ over a component $D_i'\subset D'$. Moreover, by construction of the degeneration $\mathscr{X}$, the log structure on these components has generic stalks of $\overline{\mathcal{M}}_{\mathscr{X}}$ given by $\NN$, and the stalks jump in rank along the codimension $1$ locus given by the intersection with the main component of the special fiber, as well as the fibers of the projective bundles over intersections of components of $D'$. Since $D\subset X$ is ample, all vertices in $V(G_\tau)\setminus \{v_0\}$ must be decorated by a curve class which projects to the contracted curve class under $\mathscr{X}_0 \rightarrow X$, else the total degree would be greater than $d$. 
 
 We now claim that the curve class $\textbf{A}(v)$ must be a fiber class of the projective bundle $\mathcal{P}_i$ and $\val(v) = 1$ if $\pmb\tau$ is a contributing type. To see this, we note we can compute $N_{\pmb\tau}$ by replacing $\mathscr{M}(\mathscr{X}_0,\pmb\tau)$ with the moduli stack $\mathscr{M}(\mathscr{X}_0,\pmb\tau)_{(p,C)}$ for $(p,C) \in U\times \mathscr{M}_{0,k+1}$ general, defined by the following cartesian diagram:
 
 \[\begin{tikzcd}
\mathscr{M}(\mathscr{X}_0,\pmb\tau)_{(p,C)} \arrow{r} \arrow{d} & \mathfrak{M}^{\ev}(\mathcal{X}_0,\tau)_{(p,C)}\arrow{d}\arrow{r} & \Spec\text{ }\kk\arrow{d}\\
\mathscr{M}(\mathscr{X}_0,\pmb\tau) \arrow{r} & \mathfrak{M}^{\ev}(\mathcal{X}_0,\tau)\arrow[r,"\ev_{x_{\out}}\times Stab"] & X' \times \overline{\mathscr{M}}_{0,k+1}
\end{tikzcd}\]

Using this description, we equip $\mathscr{M}(\mathscr{X}_0,\pmb\tau)_{(p,C)}$ with the pulled back obstruction theory. With this perfect obstruction theory, $\mathscr{M}(\mathscr{X}_0,\pmb\tau)_{(p,C)}$ comes equipped with a virtual fundamental class by taking the Gysin pullback of the virtual class $[\mathscr{M}(\mathscr{X}_0,\pmb\tau)]^{\vir}$ along the regular closed immersion $\mathscr{M}(\mathscr{X}_0,\pmb\tau)_{(p,C)} \rightarrow \mathscr{M}(\mathscr{X}_0,\pmb\tau)$ induced by $\Spec\text{ }\kk \rightarrow X' \times \overline{\mathscr{M}}_{0,k+1}$. Moreover, the virtual dimension of $\mathscr{M}(\mathscr{X}_0,\pmb\tau)_{(p,C)}$ is zero, and $deg[\mathscr{M}(\mathscr{X}_0,\pmb\tau)_{(p,C)}]^{\vir} = N_{\pmb\tau}$. 

By Lemma \ref{primnaive}, any curve corresponding to a point of $\mathscr{M}(\mathscr{X}_0,\pmb\tau)_{(p,C)}$ must have component $x_{\out} \in C_{v_{\out}}$ mapping to the main component of $\mathscr{X}_0$ which intersects the boundary of the main component transversely and in the isomorphism locus of $X' \rightarrow X$. Therefore, by the degree constraint established previously, the projection of the image of the subcurve marked by $v \in V(G_{\tau})$ along $\mathcal{P}_i \rightarrow D_i$ must be contracted. Letting $C_v$ be the component of the subcurve which contains the node associated with the edge containing $v_{\out}$, the curve class of $C_v$ must in particular be a multiple of the fiber class. By the transverse contact condition at the node, the multiple must be $1$, so $C_v$ must have the curve class of a fiber. By the log balancing condition and the fact the genus of $G_{\tau}$ is zero, $v$ must only be contained in a single edge with primitive contact order. Since any other component marked by $v$ must also be a fiber curve class, which has positive intersection with the zero section of $\mathcal{P}_i$, we have that $C_v$ is the only component marked by $v$ and $\pmb\sigma(v)$ is a primitive fiber class.

 
In total, for $\pmb\tau$ to contribute non-trivially to Equation \ref{degform}, $\tau$ must be torically transverse, there must exist a vertex $v_0$ decorated with the curve class $\textbf{A}$ on the central component, and all other vertices are decorated with fiber curve classes of projecive bundles over componentes $D'_i \subset D'$. To compute the contribution $N_{\pmb\tau}$, we note that since every map corresponding to a geometric point $[C,f] \in \mathscr{M}(\mathscr{X}_0,\pmb\tau_0)_{(p,C)}$ intersects the boundary of the main component of $\mathscr{X}_0$ transversely, the following diagram is cartesian in all categories:

%


\[\begin{tikzcd}
\mathscr{M}(\mathscr{X}_0,\pmb\tau)_{(p,C)} \arrow{r}\arrow{d} & \mathscr{M}(\mathscr{X}_0,\pmb\tau_0)_{(p,C)} \times (\prod_{v \in V(G_\tau)\setminus \{v_0\}} \mathscr{M}(\mathscr{X}_0,\pmb\tau_v))\arrow{d}\\
\mathfrak{M}^{\ev}(\mathcal{X}_0,\tau)_{(p,C)} \arrow{r}\arrow{d} & \mathfrak{M}^{\ev}(\mathcal{X}_0,\tau_0)_{(p,C)} \times (\prod_{v \in V(G_\tau)\setminus \{v_0\}} \mathfrak{M}^{\ev}(\mathcal{X}_0,\tau_v))\arrow{d}\\
 \prod_{e \in E(G_\tau)}X_{\pmb\sigma(e)}' \arrow{r} & \prod_{e \in E(G_\tau)}X_{\pmb\sigma(e)}'\times X_{\pmb\sigma(e)}'
\end{tikzcd}\]


Indeed, all tuples of log curves coming from a point of the product moduli space which schematically glue are transverse with respect to the log structure on $\mathscr{X}_0$, and the desired cartesian diagram follows by \cite[Theorem $5.9$]{trglue}. Furthermore, by Lemma \ref{primnaive}, $\mathscr{M}(\mathscr{X}_0,\pmb\tau_0)_{(p,C)}$ contains only curves mapping to the main component whose underlying maps to $(X',D')$ contribute to the naive count $\eta(p_1,\ldots,p_d,\textbf{A})$. Moreover, using \cite[Theorem $5.19(1)$]{punc} and unwinding obstruction theories, we find that the obstruction theory on $\mathscr{M}(\mathscr{X}_0,\pmb\tau)_{(p,C)}$ pulls back from the obstruction theory for the product. Using Lemma \ref{primnaive} and the trivial obstruction theory for genus $0$ degree $1$ covers of fibers of a projective line bundle, the pushforward of the virtual class of the product along the right most vertical in the cartesian diagram above is $\langle\prod_i \vartheta_{D_i}^{p_i}\rangle[t^\textbf{A}][\prod_e ([pt]\times 1)]$. Pulling this class along the diagonal yields the pushforward of $[\mathscr{M}(\mathscr{X}_0,\pmb\tau)_{(p,C)}]^{\vir}$ along the left composite vertical morphism, giving a degree $([\mathscr{M}(\mathscr{X}_0,\pmb\tau)_{(p,C)}]^{\vir}) = \langle\prod_i \vartheta_{D_i}^{p_i}\rangle[t^\textbf{A}]$ zero cycle.

Letting $D_i\cdot \textbf{A} = q_i$, note that the corresponding tropical type $\tau$ has automorphism group of order $q_1!\cdots q_k!$ over the type with one vertex and no legs, and the multiplicity $m_{\pmb\tau}$ is $1$ since all edges in $\tau$ have primitive contact orders. It follows that:
 
 \[n_{\textbf{A}} =  \frac{1}{q_1!\cdots q_k!} \langle\prod_i\vartheta_{D_i}^{q_i}\rangle[t^\textbf{A}]\]
 
 Now consider the element $\frac{1}{d!}tr((\sum_i \vartheta_{D_i})^{d})$ in the intrinsic mirror algebra. Expanding this expression gives:
 \[\frac{1}{d!}\sum_{q_1+\ldots + q_k = d} \binom{d}{q_1,\ldots,q_k} tr(\vartheta_{D_1}^{q_1}\cdots \vartheta_{D_k}^{q_k})\\
 = \sum_{q_1+\ldots + q_k = d} \frac{1}{q_1!\cdots q_k!} \langle\prod_i\vartheta_{D_i}^{q_i}\rangle  = \sum_{-K_X \cdot A = d} n_\textbf{A}t^\textbf{A}\]
Multiplying both sides of the equality above by $d!$ and comparing with the inner most sum of Equation \ref{regqp} yields the desired result.

%

\end{proof}

Using the mirror theorem above, we derive integrality of regularized quantum periods for a large class of Fano variety. We recall the statement from the introduction:

\begin{corollary}
Assuming $X$ is a Fano such that there exists $D \in |-K_X|$ satisfying the conditions of Theorem \ref{mthm1}, then the regularized quantum periods are non-negative integers. 
\end{corollary}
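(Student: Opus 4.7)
The plan is to combine Theorem \ref{mthm1} with Lemma \ref{primnaive} to realize each coefficient $d!\, n_\textbf{A}$ of $\widehat{G}_X$ as a non-negative integer linear combination of naive curve counts, each of which is itself a non-negative integer by \cite[Theorem 3.12]{archmirror}.

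First, I would fix a log resolution $(X',D') \to (X,D)$, which is log Calabi-Yau by Lemma \ref{logres}, and consider the element $W = W_D = \sum_i \vartheta_{D_i} \in R_{(X',D')}$. By Theorem \ref{mthm1}, after base change along the proper pushforward $p_{X'}: \kk[H_2^+(X')] \to \kk[H_2^+(X)]$, we have $\pi_W = \widehat{G}_X$. Now fix a curve class $\textbf{A} \in H_2^+(X)$ with $d = -K_X \cdot \textbf{A}$. The pullback $D'$ of the ample anticanonical divisor $D$ satisfies $D' \cdot \textbf{A}' = D \cdot p_{X'}(\textbf{A}')$ by the projection formula, and each term in $W^e$ has legs of contact order $1$ with respect to $D$, so the logarithmic balancing condition forces $e = d$ to be the only power contributing to the coefficient of $t^\textbf{A}$ in $\pi_W$. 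Extracting this coefficient yields
\[d!\, n_\textbf{A} = \sum_{\textbf{A}' \in p_{X'}^{-1}(\textbf{A})} W^d[\vartheta_0\, t^{\textbf{A}'}],\]
a finite sum since the classes $\textbf{A}'$ have bounded degree with respect to an ample polarization of $X'$.

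Next, I would expand $W^d$ via the multinomial theorem:
\[W^d[\vartheta_0\, t^{\textbf{A}'}] = \sum_{q_1 + \cdots + q_k = d} \binom{d}{q_1, \ldots, q_k}\, \vartheta_{D_1}^{q_1} \cdots \vartheta_{D_k}^{q_k}[\vartheta_0\, t^{\textbf{A}'}].\]
Because each $D_i$ corresponds to a primitive integral point along a one-dimensional cone of $\Sigma(X')$ associated with the strict transform of an irreducible component of $D \subset X$, Lemma \ref{primnaive} applies and identifies each structure constant appearing above with a naive curve count $\eta(D_1,\ldots,D_1,\ldots,D_k,\ldots,D_k,\textbf{A}')$ where $D_i$ is repeated $q_i$ times. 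By \cite[Theorem 3.12]{archmirror}, each such count is the degree of a finite \'etale morphism, hence a non-negative integer.

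Finally, since the multinomial coefficients are non-negative integers and pushforward along the monoid morphism $p_{X'}$ preserves non-negativity, $d!\, n_\textbf{A}$ is a finite sum of products of non-negative integers and so is itself a non-negative integer. I do not expect any genuine obstacle: the corollary is essentially a formal consequence of Theorem \ref{mthm1} and Lemma \ref{primnaive}, and the only mild bookkeeping concerns checking that the balancing condition isolates the single relevant power $W^d$, which follows immediately from ampleness of $D$.
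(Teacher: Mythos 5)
Your proof is correct and follows essentially the same route as the paper: apply Theorem \ref{mthm1} to identify $\pi_{W_D}$ with $\widehat{G}_X$, then invoke Lemma \ref{primnaive} to recognize each classical period as a non-negative integer naive curve count. The paper's proof is terser (it omits the multinomial expansion and the balancing-condition bookkeeping you spell out), but the underlying argument is identical.
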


\begin{proof}
Let $D = \sum_i D_i \in |-K_X|$ be a reduced representative, and consider a dominating compactification $(X',D') \rightarrow (X,D)$ of $X\setminus D$ with $(X',D')$ a log smooth pair. Then $(X',D')$ is a log Calabi-Yau pair, and by Theorem \ref{mthm1}, there exists an element $W_D = \sum_i \vartheta_{D_i} \in R_{(X',D')}$ whose classical period sequence equals the regularized quantum period sequence of $X$. By Lemma \ref{primnaive}, each classical period is a non-negative integer, and the conclusion follows.

\end{proof}

%
%

\section{Fano compactification of affine cluster varieties}

Suppose now that the coordinate ring for $U$ is is a skew symmetric affine cluster algebra, with $U$ compactified by a pair $(X,D)$, where $X$ is Fano and $D \in |-K_X|$, and $\mathcal{T} \subset U$ a seed torus with cocharacter lattice $N$. We say a Laurent polynomial $f \in \kk[N]$ is weakly mirror to a Fano variety $X$ if $f^d[z^0] = d!n_d$, as defined in the introduction. More generally, letting $J = (x_1,\ldots,x_n) \subset \kk[x_1,\ldots,x_n]$, we say $(f_k) \in \lim_{k \ge 0}\text{ }\kk[N][x_1,\ldots,x_n]/J^k$ is a formal weak mirror to $X$ if for any $d \ge 0$, the sequence in $k$, $(\overline{f}_k^d[z^0])$, is eventually the constant sequence $d!n_d$, where $\overline{f}_k$ is the Laurent polynomial given by taken the reduced expression for $f_k$ and setting $x_i = 1$ for all $i$.

We now prove existence of Laurent mirrors for Fano varieties of the above form:

\begin{corollary}\label{mirclust}
For $X$ as above, then $X$ has a formal Laurent polynomial mirror. If for any valuations $\nu_{D_i} \in B(U)$ there exists a seed $\mathcal{T} \subset U$ optimized for $\nu_{D_i}$ in the sense of \cite[Definition $9.1$]{bases_cluster}, then $X$ has a Laurent polynomial mirror $W_s$ associated with any seed $s$ of $U$.
\end{corollary}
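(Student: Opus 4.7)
The plan is to apply Theorem \ref{mthm1} to a log resolution of $(X,D)$ and then transport the element $W_D = \sum_i \vartheta_{D_i} \in R_{(X',D')}$ to a (formal) Laurent polynomial on a dual seed torus via the Fock-Goncharov theta function expansion. First, fix a log resolution $(X',D') \rightarrow (X,D)$, which is log Calabi-Yau by Lemma \ref{logres}. Since $U = X \setminus D$ is an affine cluster variety, the conditions of Theorem \ref{mthm1} are satisfied, and the theorem provides $W_D \in R_{(X',D')}$ with $\pi_{W_D} = \widehat{G}_X$. The Fock-Goncharov conjecture then yields, for every seed $s$, a ring homomorphism from a suitable completion of $R_{(X',D')}$ into a (formal) Laurent polynomial ring on the dual seed torus $\mathcal{T}_s^{\vee} = \Spec \kk[N]$, sending each theta function $\vartheta_p$ to its broken line expansion in the chamber of $s$; under this map $\vartheta_0 \mapsto 1$, and the $\vartheta_0$-coefficient of any element corresponds to the $z^0$-constant term on $\mathcal{T}_s^{\vee}$.

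For the first claim, I would filter the broken line expansion of $\sum_i \vartheta_{D_i}$ by the natural combinatorial quantity (number of wall-crossings, or equivalently the $J$-adic order arising from the bulk parameters of the canonical scattering diagram) and let $f_k \in \kk[N][x_1,\ldots,x_n]/J^k$ be the resulting truncation. For each fixed power $d \ge 0$, only finitely many broken line configurations contribute to the $z^0$ constant term of $(\sum_i \vartheta_{D_i})^d$, so the sequence $(\overline{f}_k^d[z^0])$ stabilizes once $k$ exceeds this bound. By the ring homomorphism property, the stable value equals $W_D^d[\vartheta_0]$, which is $d!n_d$ by Theorem \ref{mthm1}.

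For the second claim, suppose for each irreducible component $D_i \subset D$ there is a seed $s_i$ optimized for $\nu_{D_i}$ in the sense of \cite[Definition $9.1$]{bases_cluster}. The defining property of an optimized seed ensures that the broken line expansion of $\vartheta_{D_i}$ in the chamber of $s_i$ terminates after finitely many terms, so $\vartheta_{D_i}$ is a genuine Laurent polynomial on $\mathcal{T}_{s_i}^{\vee}$. Mutation invariance of the theta basis (the cluster Laurent phenomenon for theta functions) then propagates this property: $\vartheta_{D_i}$ is a Laurent polynomial on the dual of every seed torus of $U$. Hence for any seed $s$, the element $W_s := \sum_i \vartheta_{D_i}|_{\mathcal{T}_s^{\vee}} \in \kk[N]$ is a Laurent polynomial, and the ring homomorphism property combined with Theorem \ref{mthm1} gives $W_s^d[z^0] = W_D^d[\vartheta_0] = d! n_d$, as required.

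The main obstacle I expect is carefully invoking the comparison between the Gross-Siebert intrinsic mirror algebra $R_{(X',D')}$ and the cluster-theoretic description of theta functions for $U$ via broken lines on the Fock-Goncharov dual. This comparison is what licenses the translation from the log Gromov-Witten definition of $\vartheta_{D_i}$ into a combinatorial broken line expansion on a seed torus, and in particular relies on the independence of the mirror family on the choice of log Calabi-Yau compactification. As noted in the introduction, this independence holds in our setting since $U$ is a cluster variety and therefore contains a dense torus, so the results of \cite{archmirror} and \cite{mirrcomp} apply. Once this identification is in place, both finiteness of contributions at each period degree and the Laurent polynomiality for optimized valuations reduce to standard cluster-theoretic statements.
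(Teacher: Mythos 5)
Your overall strategy mirrors the paper's: apply Theorem \ref{mthm1}, transport $W_D = \sum_i \vartheta_{D_i}$ to a (formal) seed chart via the identification of $R_{(X',D')}$ with the Fock--Goncharov mirror construction, and then argue Laurent polynomiality under the optimized-seed hypothesis. However, two steps deserve closer attention.

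First, the identification between the intrinsic mirror algebra $R_{(X',D')}$ and the broken-line / scattering-diagram description of the Fock--Goncharov dual is not a soft consequence of the comparison results in \cite{archmirror} and \cite{mirrcomp}, which concern independence of the mirror family on the choice of compactification. The paper instead invokes \cite[Theorem D]{clustermirr} directly to identify the canonical wall structure of $(X',D')$ (after base change) with the cluster scattering diagram for $\check{U}_{\operatorname{prin}}$, and observes that this base change factors through setting $z^{\textbf{A}} = 1$. This is the crucial input that licenses the passage from theta functions in $R_{(X',D')}$ to their broken-line expansions in seed charts, and your proposal acknowledges but does not actually supply it; citing \cite{clustermirr} is what closes this gap, since it provides the precise bridge between the log Gromov--Witten and cluster-theoretic sides.

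Second, the argument for genuine (non-formal) Laurent polynomiality has a genuine gap. You claim that the optimized-seed condition makes the broken-line expansion of $\vartheta_{D_i}$ \emph{finite} on $\mathcal{T}_{s_i}^\vee$, so that $\vartheta_{D_i}$ is a Laurent polynomial there, and then appeal to ``mutation invariance'' to propagate this to all seed charts. But being a Laurent polynomial in a single seed chart does not transfer across cluster mutations -- the Laurent phenomenon applies specifically to \emph{cluster monomials}, not to arbitrary Laurent polynomials. What you actually need is the stronger content of \cite[Lemma $9.3(1)$]{bases_cluster}: the optimized-seed hypothesis forces $\vartheta_{D_i}$ to be a \emph{global monomial}, i.e.\ a single cluster monomial in the optimized seed. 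It is precisely this monomial property -- not mere finiteness of the broken-line expansion -- that, via the Laurent phenomenon, guarantees a Laurent polynomial restriction to every (formal) torus chart. As written, your chain of implications ``finite expansion $\Rightarrow$ Laurent in $s_i$ $\Rightarrow$ Laurent everywhere'' has a false middle step; replacing it with ``optimized $\Rightarrow$ global monomial $\Rightarrow$ Laurent everywhere'' repairs the argument.
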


\begin{proof}

By \cite[Theorem $D$]{clustermirr}, there exists an snc compactification $(X',D')$ of $U$ such that the canonical wall structure for the log Calabi-Yau pair $(X',D')$, after an appropriate base change, is the scattering diagram for the formal Fock-Goncharov dual cluster variety $\check{U}_{\operatorname{prin}}$. Moreover, after taking the associated rings of theta functions, this base change factors through the base change sending $z^{\textbf{A}}$ to $1$ for all $\textbf{A} \in \NE(X)$. The gluing construction of the mirror family over $\text{Spf }\kk[[x_1,\ldots,x_n]]$ implies the existence of formal tori $\hat{\mathcal{T}}_s = \text{Spf }\kk[[x_1,\ldots,x_n]][N] \rightarrow \text{Spf }R_{(X',D')}$, and by pulling back $W \in R_{(X',D')}$ along the inclusion, we produce a formal Laurent polynomial $W_s \in \kk[[x_1,\ldots,x_n]][N]$. Since the trace form of the mirror algebra is independent of the choice of compactification of $U$ after setting $z^{\textbf{A}} = 1$, Theorem \ref{mthm1} implies $W_s$ is a formal Laurent polynomial mirror for $X$. If $U$ has optimized seeds for each $\nu_{D_i}$, then by \cite[Lemma $9.3(1)$]{bases_cluster}, the formal theta functions $\vartheta_{D_i}$ are global monomials, hence restrict to Laurent polynomials in any formal torus chart. In particular, the formal Laurent polynomial $W_s$ is in fact a Laurent polynomial. Since the tori are in bijection with seeds of $U$, we have a Laurent polynomial mirror associated with every seed $s$ of $U$. 
\end{proof}

\begin{proof}[Proof of Corollary \ref{qdiff}]
By Corollary \ref{mirclust}, there exists a Laurent polynomial $f_D$ associated with the pair $(X, X\setminus U)$ such that $\pi_{f_D} = \hat{G}_X$. By Cauchy's residue theorem, we have:

\[g(t) = \frac{1}{(2\pi i)^n}\int_{\Gamma} e^{Wt} \bigwedge_j\frac{dx_j^*}{x_j^*} = \frac{1}{(2\pi i)^n}\sum_{d \ge 0} \int_{\Gamma} \frac{W^d}{d!}t^d \bigwedge_j\frac{dx_j^*}{x_j^*} = \sum_{d \ge 0} n_dt^{d}\]
The righthand term is the quantum period series of $X$, and satisfies the quantum differential equation for $X$ by construction.
\end{proof}

\begin{remark}
Tonkonog in \cite{Tonk19} proves an analogous result in symplectic geometry, where the Laurent polynomial $W_{D,s}$ is instead the Landau-Ginzburg potential associated with a monotone Lagrangian torus $L \subset X$. Under the expectation that $B$ is the base of the SYZ fibration for $X\setminus U$, the choice of Lagrangian torus $L$ corresponds to a fiber of the SYZ fibration, hence corresponds to a general point $p \in B$, and thus a choice of seed $s$ of $U$. 

\end{remark}

In order to ensure that $U$ has a Laurent polynomial mirror, we will assume that the optimized seed condition of Corollary \ref{mirclust} is satisfied. With this assumption, given any seed $\mathcal{T}_s^{\vee} \subset \check{U}$ of the Fock-Goncharov dual cluster variety, we have $W_D \in \Gamma(\check{U},\mathcal{O}_{\check{U}})$, and let $W_{D,s}$ be the restriction of $W_D \in R_{(X',D')}$ to $\mathcal{T}_s^{\vee}$. This gives a Laurent polynomial, with Newton polytope $\Delta_{W,s} \subset M_{\mathbb{R}}$, where $M_{\mathbb{R}}$ the real vector space associated with the lattice of characters on $\mathcal{T}_s^{\vee}$.


%
%

We now use the polar dual of the polytope $\Delta_{W,s}$, $\Delta_{W,s}^{\vee}$, to recover the compactification $X$ as well as a toric degeneration to the polarized toric variety associated with $\Delta_{W,s}^{\vee}$. To begin, note that $\Delta_{W,s}$ is a closed subset of the space of characters of the torus of $\mathcal{T}_s^{\vee} \subset \check{U}$, hence the polar dual $\Delta_{W,s}^{\vee}$ is contained in the space of cocharacters $N_{\mathbb{R}}$ of $\mathcal{T}_s^{\vee}$. Moreover, recall that $\Delta_{W,s}^{\vee}$ is given by the formula:

\begin{equation}\label{polytope}
\Delta_{W,s}^{\vee} = \{v \in N_{\mathbb{R}}\text{ }|\text{ }\operatorname{Trop}(W|_{\mathcal{T}_s^{\vee}})(v) \ge -1\} = \cap_{i=1}^k \{v \in N_{\mathbb{R}}\text{ }|\text{ }\operatorname{Trop}(\vartheta_{D_i}|_{\mathcal{T}_s^{\vee}}) \ge -1\}.
\end{equation}

In the previous formula, we identify $W$ and $\vartheta_{D_i}$ with Laurent polynomials, and $\operatorname{Trop}(f): N_\mathbb{R} \rightarrow \mathbb{R}$ the piecewise linear functions induced by a Laurent polynomial $f$ with integer coefficients, in which we replace the binary operations of multiplication with addition and addition with minimum. After identifying the space of cocharacters of $\mathcal{T}_s^{\vee}$ with the space of characters of $\mathcal{T}_s$, $\Delta_{W,s}^{\vee}$ is identified naturally with a convex rational polytope in the character lattice of the torus $\mathcal{T}_s$, hence encodes a projective toric variety $X_{W,s}^{\tor}$ which compactifies $\mathcal{T}_s$. If we assume in addition that the cluster variety $U$ satisfies the full Fock-Goncharov conjecture, we use results from \cite{bases_cluster} to show in that this polytope also encodes a toric degeneration of $X$ to $X_{W,s}^{tor}$. To do so, we recall the following class of polytope introduced in \cite{bases_cluster}:

\begin{definition}
A closed subset $\Xi \subset V(\mathbb{R}^T)$ is \emph{positive} if for any non-negative integers $d_1,d_2$, any $p_1 \in d_1\Xi(\NN)$, $p_2 \in d_2\Xi(\NN)$ and any $r \in V(\NN^T)$ with $N_{p_1,p_2}^r \not= 0$, we have $r \in (d_1+d_2)\Xi(\NN)$. 
\end{definition}
Positivity of $\Delta_{W,s}$ will follow from the following lemma:

%

\begin{lemma}\label{thetabasis}
Suppose that $U$ satisfies the full Fock-Goncharov conjecture and for every divisorial valuation $\nu_{D_i}$ corresponding to a component $D_i \subset D$ there is an optimized seed for $\nu_{D_i}$. Then $H^0(X,\mathcal{O}_X(rD))$ is a vector space with $\kk$ basis $\{\vartheta_p\text{ }|\text{ }p \in r\Delta_{W,s}^{\vee}\}$, where $r\Delta_{W,s}^{\vee}$ is the $r^{th}$ dilation of the polytope $\Delta_{W,s}^{\vee}$.
\end{lemma}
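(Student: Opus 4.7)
The plan is to translate the question into one about divisorial valuations of theta functions, and then use the optimized-seed hypothesis to perform an explicit tropical computation. First I would identify $H^0(X, \mathcal{O}_X(rD))$ with the subspace of $f \in \kk[U]$ satisfying $\nu_{D_i}(f) \ge -r$ for every component $D_i \subset D$; this is standard since $D$ is effective and sections of $\mathcal{O}_X(rD)$ acquire poles only along $D$. By the full Fock--Goncharov conjecture, $\kk[U]$ admits a theta basis $\{\vartheta_p\}$ indexed by the integral tropical points of the Fock--Goncharov dual cluster variety, which in the chart $\mathcal{T}_s^\vee$ are identified with a lattice $L$ containing $r\Delta_{W,s}^\vee$ as a rational polytope. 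After expanding $f = \sum_p c_p \vartheta_p$, the task will then reduce to two claims: (a) $\nu_{D_i}(f) \ge -r$ for all $i$ if and only if $\nu_{D_i}(\vartheta_p) \ge -r$ for every $i$ and every $p$ with $c_p \ne 0$; and (b) $\{p \in L : \nu_{D_i}(\vartheta_p) \ge -r \text{ for all } i\} = r\Delta_{W,s}^\vee \cap L$.

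For (a), I would invoke \cite[Lemma 9.3]{bases_cluster}: the existence of an optimized seed $s_i$ for $\nu_{D_i}$ realizes this valuation as the order of vanishing along a toric boundary divisor of the seed torus $\mathcal{T}_{s_i} \subset U$. By the positivity theorems for cluster theta bases, each $\vartheta_p|_{\mathcal{T}_{s_i}}$ is a Laurent polynomial with non-negative integer coefficients in the cluster variables of $s_i$, so the $\nu_{D_i}$-value of $\vartheta_p$ is determined monomially from its support with no cancellation. Non-negativity of the coefficients then forbids cancellations in the $\nu_{D_i}$-valuation of any $\kk$-linear combination of distinct theta functions, yielding $\nu_{D_i}(f) = \min_{p : c_p \ne 0} \nu_{D_i}(\vartheta_p)$ and hence (a).

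For (b), the same lemma implies that the theta function $\vartheta_{D_i}$ is a global monomial on the Fock--Goncharov dual; in particular $\vartheta_{D_i}|_{\mathcal{T}_s^\vee}$ is a single character $z^{m_i}$, so $\operatorname{Trop}(\vartheta_{D_i}|_{\mathcal{T}_s^\vee})$ is the linear functional $\langle\,\cdot\,, m_i\rangle$. The tropical--divisorial duality between theta functions on one side of Fock--Goncharov duality and divisorial valuations on the other then gives $\nu_{D_i}(\vartheta_p) = \operatorname{Trop}(\vartheta_{D_i}|_{\mathcal{T}_s^\vee})(p)$, matching exactly the inequalities defining $r\Delta_{W,s}^\vee$ in Equation~\ref{polytope}. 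Combining (a) and (b) completes the argument. The hard part will be (a): ruling out cancellation under $\nu_{D_i}$ in a $\kk$-linear combination of theta functions depends both on positivity of the cluster theta basis and on the optimized-seed hypothesis, and this is where the full strength of the Fock--Goncharov conjecture enters essentially.
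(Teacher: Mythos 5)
Your overall strategy matches the paper's: translate membership in $H^0(X,\mathcal{O}_X(rD))$ into the divisorial conditions $\nu_{D_i}(f)\ge -r$, reduce to the theta basis, and compute $\nu_{D_i}(\vartheta_p)$ tropically via the optimized-seed hypothesis and \cite[Lemma 9.3, Lemma 9.10]{bases_cluster}, recovering exactly the inequalities defining $r\Delta_{W,s}^{\vee}$. Your claim (b) is handled in the paper by citing \cite[Lemma 9.10(3)]{bases_cluster} and then tropicalizing $\vartheta_{D_i}$ in the seed chart $\mathcal{T}_s^{\vee}$, which is what you propose.

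The justification you give for claim (a), however, is genuinely wrong. Positivity of the Laurent expansions of the individual theta functions does \emph{not} forbid cancellation in a $\kk$-linear combination, since $\kk$ contains $-1$: for example, if $\vartheta_1 = x+y$ and $\vartheta_2 = y$ were two elements of a positive basis with $\nu(x) = 2$, $\nu(y)=1$, then $\nu(\vartheta_1-\vartheta_2) = 2 > \min(\nu(\vartheta_1),\nu(\vartheta_2))$. What actually prevents cancellation is not positivity but the \emph{pointedness} of the theta expansion in the optimized chart $\mathcal{T}_{s_i}$: in that chart $\vartheta_p$ has a unique monomial of minimal $\nu_{D_i}$-order, namely $z^{\iota(p)}$ where $\iota$ is the piecewise-linear identification of $\check U^{\trop}(\ZZ)$ with the cocharacter lattice determined by $s_i$, and the remaining terms have strictly larger $\nu_{D_i}$-order. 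Since $\iota$ is injective, the minimal-order terms of distinct theta functions in a linear combination $\sum_p c_p\vartheta_p$ cannot cancel, giving $\nu_{D_i}(\sum_p c_p\vartheta_p) = \min_{c_p\ne 0}\nu_{D_i}(\vartheta_p)$. This adapted-basis property is the mechanism you need, and it follows from the structure theorems of GHKK rather than from positivity. The paper leaves this step implicit (after establishing the polytope characterization it simply asserts that any $f=\sum a_i\vartheta_{p_i}$ in $H^0(X,\mathcal{O}_X(rD))$ must have all $p_i$ in $r\Delta^\vee_{W,s}$); if you want to make the argument airtight, you should replace the positivity claim with the pointedness/adapted-basis statement and cite the appropriate part of \cite{bases_cluster}.
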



\begin{proof}

Since $U$ satisfies the full Fock-Goncharov conjecture, $\kk[U]$ has a canonical basis of theta functions indexed by $\check{U}^{\trop}(\ZZ)$. By the assumption that $U$ has optimized seeds for every $\nu_{D_i}$, by \cite[Lemma $9.3(1)$]{bases_cluster}, $\check{U}_{\operatorname{prin}}$ contains a dense formal torus such that the theta function $\vartheta_{D_i}$ restricts to a standard monomial. Moreover, observe that $\vartheta_p \in H^0(X,\mathcal{O}_X(rD))$ if and only if $\nu_{D_i}(\vartheta_{p}) \ge -r$ for all $i$. By \cite[Lemma $9.10(3)$]{bases_cluster}, the previous inequalities hold if and only if for all components $D_i \subset D$ and $\vartheta_{D_i} \in \kk[\check{U}]$ the element of the canonical basis on the dual cluster variety associated with $\nu_{D_i} \in B(\NN)$:
\[p(\vartheta_{D_i}) \ge -r.\]
We may compute the valuations appearing on the left side of the inequality above by restricting $\vartheta_{D_i}$ to a cluster chart and evaluating $p$, thought of as a cocharacter on the cluster chart, with the tropicalization of the resulting Laurent polynomial $\vartheta_{D_i,s}$. Indeed, $\vartheta_{D_i}$ and $\vartheta_{D_i,s}$ induce the same functions on the essential skeleton of the torus $\mathcal{T}_s^{\vee} \subset \check{U}$, and the latter function is given by tropicalization. It follows by definition of $\Delta_{W,s}^{\vee}$ that the latter inequality holds if and only if $p \in r\Delta_{W,s}^{\vee}$. Hence, the integral points of $r\Delta_{W,s}^{\vee}$ are in bijection with theta functions $\vartheta_p$ such that $\vartheta_p \in H^0(X,\mathcal{O}_X(rD))$.

Now suppose $f \in H^0(X,\mathcal{O}_X(rD))$. In particular, we must have $f$ is regular on $U$, hence we can write $f = \sum_i a_i\vartheta_{p_i}$ for some $a_i \in \kk$. Since we have established that $\vartheta_p \in H^0(X,\mathcal{O}(rD))$ if and only if $p \in r\Delta_{W,s}^{\vee}$, we must have $p_i \in r\Delta_{W,s}^{\vee}$ for all $a_i \not= 0$. The final claim that the theta functions form a basis follows from that fact that theta function form a basis for $\kk[U]$. 
\end{proof}

\begin{proposition}\label{positive}
$\Delta_{W,s}^{\vee}$ is a full dimensional compact positive polytope. Moreover, there is a corresponding toric degeneration of $X$ with special fiber $X_{W,s}^{tor}$. 
\end{proposition}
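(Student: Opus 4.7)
The plan is to deduce all four assertions from Lemma \ref{thetabasis} together with standard constructions with positive polytopes from \cite{bases_cluster}. For compactness and full-dimensionality, I would first note that since $X$ is Fano and $D \in |-K_X|$, the divisor $D$ is ample, so $\dim_{\kk} H^0(X, \mathcal{O}_X(rD))$ is finite for every $r \ge 0$ and grows like $r^n/n!$ with $n = \dim X$. Lemma \ref{thetabasis} identifies this dimension with $|r\Delta_{W,s}^{\vee}(\NN)|$. Combined with the description (\ref{polytope}) of $\Delta_{W,s}^{\vee}$ as an intersection of finitely many rational half-spaces, the finiteness of each $r\Delta_{W,s}^{\vee}(\NN)$ forces boundedness, hence compactness, while the polynomial growth rate of exact degree $n$ forces full-dimensionality.

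For positivity, take $p_i \in d_i\Delta_{W,s}^{\vee}(\NN)$ for $i=1,2$. By Lemma \ref{thetabasis}, $\vartheta_{p_i} \in H^0(X, \mathcal{O}(d_iD))$, so their product lies in $H^0(X, \mathcal{O}((d_1+d_2)D))$. Expanding in the theta basis (after the specialization $z^{\textbf{A}} \mapsto 1$ producing the coordinate ring of $X$, which factors through the base change used in Corollary \ref{mirclust}), we have $\vartheta_{p_1}\vartheta_{p_2} = \sum_r N_{p_1,p_2}^r \vartheta_r$. Applying Lemma \ref{thetabasis} once more to the ambient linear system, every $r$ with $N_{p_1,p_2}^r \neq 0$ must lie in $(d_1+d_2)\Delta_{W,s}^{\vee}(\NN)$, which is precisely the positivity condition.

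Finally, for the toric degeneration, having established that $\Delta_{W,s}^{\vee}$ is a full-dimensional positive polytope together with the theta basis description of the section ring $R = \bigoplus_r H^0(X, \mathcal{O}_X(rD))$, I would invoke the toric degeneration construction for positive polytopes developed in \cite{bases_cluster}. Concretely, one filters $R$ by the cone over $\Delta_{W,s}^{\vee}$, with positivity ensuring the filtration is multiplicative, and passes to the associated Rees algebra to obtain a flat family over $\mathbb{A}^1$ whose generic fiber is $\operatorname{Proj} R = X$ and whose special fiber is $\operatorname{Proj}$ of the semigroup algebra on the cone over $\Delta_{W,s}^{\vee}$, which by construction is the polarized toric variety $X_{W,s}^{\tor}$.

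The main obstacle I anticipate is the careful bookkeeping of the base changes involved in identifying products in the intrinsic mirror algebra $R_{(X',D')}$ with products of sections of $\mathcal{O}_X(D)$, so that the structure constants $N_{p_1,p_2}^r$ appearing in the positivity argument are genuinely the same as those governing multiplication in $\bigoplus_r H^0(X, \mathcal{O}_X(rD))$. Once this identification is pinned down, the positivity argument is a formal consequence of Lemma \ref{thetabasis}, and the degeneration is a direct application of a standard result in \cite{bases_cluster}.
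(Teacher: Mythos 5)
Your proof is correct and takes essentially the same approach as the paper: both arguments derive compactness from the finite-dimensionality of $H^0(X,\mathcal{O}_X(rD))$ via Lemma \ref{thetabasis}, both derive positivity by observing that the filtration of $\kk[U]$ by these spaces of sections is multiplicative and respects the theta basis, and both obtain the degeneration by identifying the Rees algebra of this filtration with the graded ring appearing in the construction of \cite[Theorem 8.39]{bases_cluster}. The only cosmetic difference is in the full-dimensionality step, where you invoke the degree-$n$ Ehrhart growth of $|r\Delta_{W,s}^{\vee}(\NN)|$ while the paper instead notes that the dilates of $\Delta_{W,s}^{\vee}$ cover the entire cocharacter space since every theta function eventually lies in some $H^0(X,\mathcal{O}_X(rD))$; both observations are valid and equivalent for present purposes.
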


\begin{proof}

%
%
%

To prove $\Delta_{W,s}^{\vee}$ is a full dimensional compact positive polytope, we note that by Lemma \ref{thetabasis}, the theta functions associated with integral points of $r\Delta_{W,s}^{\vee}$ give a basis for $H^0(X,\mathcal{O}_X(rD))$ for $r\ge 0$. Since every theta function is contained in $H^0(X,\mathcal{O}_X(rD))$ for some $r\ge 0$ and $H^0(X,\mathcal{O}_X(rD))$ is a finite dimensional vector space, the dilates of the rational polytope $\Delta_{W,s}^{\vee}$ cover $N_{\mathbb{R}}$ and $\Delta_{W,s}^{\vee}$ must be closed and bounded, hence $\Delta^{\vee}_{W,s} \subset M_{\mathbb{R}}$ is a compact and full dimensional polytope. Moreover, the algebra $\mathbb{C}[U]$ is a filtered algebra, with associated graded vector space having homogeneous basis $\vartheta_p$ with  $\deg(\vartheta_p) = min\{r\text{ }|\text{ }p \in r\Delta_{W,s}^{\vee}\}$. In particular, we find $\Delta_{W,s}^{\vee}$ is also a positive polytope. 

The construction of the toric degeneration with special fiber $X_{\Delta_{W,s}^{\vee}}$ follows from \cite[Theorem $8.39$]{bases_cluster}. Indeed, the general fiber of the construction from loc. cit. is given by the proj construction of the graded ring $\oplus_{k\ge 0} \oplus_{p \in k\Delta_{W,s}^{\vee}}\vartheta_pT^k$, and this graded ring is precisely the Rees algebra associated with the filtered ring structure on $\Gamma(U,\mathcal{O}_U)$ imposed by the ample divisor $D \subset X$ by Lemma \ref{thetabasis}. Thus, the general fiber is $X = \text{Proj}\oplus_{r\ge 0} H^0(X,\mathcal{O}_X(rD))$. The special fiber is the projective toric variety associated to $\Delta_{W,s}^{\vee}$, which is $X_{W,s}^{\tor}$ by definition. 

\end{proof}

%
%
%
%
%

Finally, we observe that for $X$ as above now assumed to be an $\mathcal{X}$ cluster variety, the polytopes $\Delta_{W,s}^{\vee}$ constructed above are Newton-Okounkov bodies for the anticanonical line bundle:

\begin{theorem}
For a valuation $\val: K(X)^{\times} \rightarrow \mathbb{R}^n$, and a line bundle $L \in \Pic(X)$, recall the associated Newton-Okounkov body: 
\[NO_{\val}(L) = \overline{\operatorname{ConvexHull}(\bigcup_{r=1}^{\infty} \frac{1}{r} \val(H^0(X,L^{\otimes r})))}\]
For $X$ a $\mathcal{X}$-cluster variety satisfying the full Fock-Goncharov conjecture, for every seed $T^{\vee}_s\subset \check{U}$, there is an associated valuation $\val_s: K(X)^\times \rightarrow \ZZ^n$ such that $NO_{\val_s}(-K_X) = \Delta_{W,s}^{\vee}$. 
\end{theorem}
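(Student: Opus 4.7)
The plan is to produce a $\ZZ^n$-valued valuation $\val_s$ on $K(X)^\times$ adapted to the seed $s$ with the property that the theta function $\vartheta_p$ takes the value $p$ under the identification $\check{U}^{\trop}(\ZZ) \cong \ZZ^n$ induced by $s$. Given this, Lemma~\ref{thetabasis} combined with standard convex geometry immediately yields $NO_{\val_s}(-K_X) = \Delta_{W,s}^{\vee}$.

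To construct $\val_s$, I would first fix a generic linear form, or equivalently a complete flag of sublattices in $\ZZ^n$, to induce a lexicographic total order refining the coordinatewise partial order. For nonzero $f \in \kk[U]$, the full Fock--Goncharov conjecture gives a unique finite expansion $f = \sum_p a_p \vartheta_p$; I define $\val_s(f)$ to be the order-minimal element of $\{p : a_p \neq 0\}$ and extend to $K(X)^\times$ by $\val_s(f/g) = \val_s(f) - \val_s(g)$. An equivalent and more concrete description is: restrict $f$ to the dense seed torus $\mathcal{T}_s \subset U$, expand as a Laurent polynomial, and read off the minimal exponent in the chosen order. The two descriptions agree because, under the identification of $\check{U}^{\trop}(\ZZ)$ with the character lattice of $\mathcal{T}_s$ via the seed $s$, each $\vartheta_p$ restricts to a Laurent polynomial whose order-minimal monomial is exactly $z^p$. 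This is the standard pointedness property of the theta basis, which in the present context is guaranteed by the optimized seed hypothesis via \cite[Lemma $9.3$]{bases_cluster}: the lemma already secures it for the generators $\vartheta_{D_i}$, and general pointedness then propagates through the broken-line description of theta function products.

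Once $\val_s$ is available, the fact that it is truly a valuation follows from the torus-chart description: multiplicativity of the order-minimum exponent is immediate for Laurent polynomials on a split torus, and since $\mathcal{T}_s$ is open dense in $U$ the restriction $\kk[U] \hookrightarrow \kk[\mathcal{T}_s]$ is injective, so $\val_s(fg) = \val_s(f) + \val_s(g)$ on $\kk[U]$ and hence on $K(X)^\times$. Combining pointedness with Lemma~\ref{thetabasis} gives
\[
\val_s\bigl(H^0(X, \mathcal{O}_X(-rK_X)) \setminus \{0\}\bigr) \;=\; r\Delta_{W,s}^{\vee} \cap \ZZ^n,
\]
and since by Proposition~\ref{positive} the polytope $\Delta_{W,s}^{\vee}$ is rational and full-dimensional, the union $\bigcup_{r \ge 1} \tfrac{1}{r}\bigl(r\Delta_{W,s}^{\vee} \cap \ZZ^n\bigr)$ is dense in $\Delta_{W,s}^{\vee}$, yielding $NO_{\val_s}(-K_X) = \Delta_{W,s}^{\vee}$.

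The main obstacle is the pointedness step: one needs that in the seed torus chart, $\vartheta_p$ has a unique order-minimal monomial equal to $z^p$ after identifying indices via the $g$-vector of $s$. The rest is formal bookkeeping once this is in place. Tracking the conventions (signs, the $N \leftrightarrow M$ identification, and compatibility of the lexicographic order with the $g$-fan) is where I would invest the most care, and where the optimized seed hypothesis earns its keep.
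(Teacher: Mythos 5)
Your proposal is correct and follows essentially the same route as the paper: both define $\val_s$ so that $\val_s(\vartheta_p) = \phi(p)$ and then close the argument via Lemma~\ref{thetabasis}. The paper obtains the valuation and the equality $\val_s(\vartheta_p)=\phi(p)$ by citing Rietsch--Williams (Definition $8.1$ and Theorem $16.15$ of \cite{NOmirr}), observing that their construction works for an arbitrary $\mathcal{X}$-cluster variety satisfying the full Fock--Goncharov conjecture; you instead build the valuation directly as the lex-minimal exponent in a seed-torus Laurent expansion and verify multiplicativity from the torus-chart description. These are the same valuation, so your approach is just a more self-contained unfolding of the reference.

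One small conceptual imprecision worth flagging: pointedness of theta functions (that $\vartheta_p$ restricted to a seed torus has a distinguished extremal monomial $z^{\phi(p)}$ with respect to the dominance order refined by any adapted linear functional) is a foundational consequence of the broken-line description in \cite{bases_cluster} and does not depend on the optimized seed hypothesis. The optimized seed hypothesis, via \cite[Lemma $9.3$]{bases_cluster}, is what upgrades the specific theta functions $\vartheta_{D_i}$ from \emph{pointed} to being actual \emph{global monomials}, which is needed for Lemma~\ref{thetabasis}, not for defining $\val_s$. So your phrase ``guaranteed by the optimized seed hypothesis'' attributes pointedness to the wrong source, though your immediately following remark that it ``propagates through the broken-line description'' is the correct mechanism. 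Also be careful that the two descriptions of $\val_s$ you give --- order-minimum over the index set $\{p : a_p \neq 0\}$ of the theta expansion, versus lex-minimum over the support of the Laurent expansion --- are only equivalent once pointedness is in hand; multiplicativity is cleanest to see from the Laurent-expansion description, which you correctly note.
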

\begin{proof}
For each seed torus $\mathcal{T}_s \subset U$ giving in particular an isomorphism $M_{\mathcal{T}_s} \cong \ZZ^n$, there is a valuation $\val_s: K(X) \rightarrow \ZZ^n \cong M_{\mathcal{T}_s}$, defined on the theta basis by $\nu(\vartheta_p) = \phi(p)$ for $\phi: \check{U}^{trop}(\ZZ) \rightarrow N_{\mathcal{T}^{\vee}_s} = M_{\mathcal{T}_s}$ the piecewise linear isomorphism determined by inclusion $\mathcal{T}^\vee_s \subset \check{U}$. To see this assignment extends to a valuation, we note that the valuation considered by Rietsch and Williams in \cite[Definition $8.1$]{NOmirr} , which can be defined for a seed of an arbitrary $\mathcal{X}$ cluster variety satisfying the full Fock-Goncharov conjecture, and satisfies the equality $\val_s(\vartheta_p) = \phi(p)$ by Theorem $16.15$ of loc. cit.

%
 
 By Lemma \ref{thetabasis}, the integral points of $r\Delta_{W,s}^{\vee}$ determine a basis of sections for $-K_X^{\otimes r}$. The rational polytope $\Delta_{W,s}^{\vee}$ may not necessarily be integral, but there exists some $r \ge 1$ such that $r\Delta_{W,s}^{\vee}$ is an integral polytope. In particular, $r\Delta_{W,s}^{\vee}$ is the convex hull of its integral points. It now follows from the definition of Newton-Okounkov bodies that $NO_{\val_s}(-K_X) = \Delta_{W,s}^{\vee}$. 
\end{proof}

\begin{remark}
We note that Bossinger, Cheung, Magee and N\'ajera Ch\'avez in \cite{minmodclust} have investigated Newton-Okounkov bodies for partial compactifications of type $\mathcal{A}$ cluster varieties. The authors of op. cit. give an expression for Newton-Okounkov bodies in terms of the broken line convex hull of integral points in $B(\check{U})$ associated with theta function in $H^0(X,L)$ for $D$ an effective ample divisor with support in the complement of $U$. A choice of seed $\mathcal{T}_s \subset U$ then gives an identification of $B(\check{U})(\NN)$ with $\mathbb{Z}^n$.

Our work is closer in spirit to \cite{NOmirr}, which identifies families of Newton-Okounkov bodies of type $\mathcal{X}$ cluster varieties as polar dual to Newton polytope of a mirror Laurent polynomial constructed using the superpotential on the mirror. In particular, the mutations of Newton-Okounkov bodies induced by wall crossings follow from the analogous mutations of the mirror Laurent polynomial induced by wall crossing. This perspective also makes clear that the Newton-Okounkov body is determined by a finite number of inequalities, and highlights its relation to the Gromov-Witten theory of $X$.

Finally, while we worked with the ample anticanonical divisor in this section, similar statements hold if we replaced the anticanonical with another ample divisor supported on $D$, by a modification of the description of the polytope given in Equation \ref{polytope} similar to \cite[Definition $10.14$]{NOmirr}. An enumerative interpretation still follows from Theorems \ref{enum} and \ref{logTRR}, but the relationship with quantum periods only holds when using the anticanonical divisor. 
\end{remark}

\section{Intrinsic mirror to Grassmanian}


A series of papers written by Marsh and Rietsch in \cite{LGgr} and Rietsch and Williams in \cite{NOmirr} express the mirror superpotentials to the Grassmanian $X = \Gr(n-k,n)$ in terms of Pl\"ucker coordinates on the dual Grassmanian $\Gr(k,n)$, and show that these LG mirrors encode Newton-Okounkov bodies and toric degenerations of $X$. We will show in this section that the intrinsic mirror construction above recovers the Pl\"ucker mirror superpotential considered in these papers. 



We first recall the positroid stratification of $X = \Gr(n-k,n)$, with dense cell given by the open positroid variety $X^\circ \subset X$ which is a log Calabi-Yau. Moreover, the coordinate ring for $X^\circ$ has a skew-symmetric $\mathcal{A}$ cluster structure, first investigated by Scott in \cite{scottclust}. It follows from \cite[Theorem $21.27$]{archmirror} that away from codimension $2$, the Fock-Goncharov dual $\mathcal{X}$ cluster variety $\check{X}^\circ$ is the general fiber of any mirror family to $X^\circ$. It is known that the coordinate ring of the Fock-Goncharov dual cluster variety is the coordinate ring of the open positroid variety $\check{X}^\circ \subset \check{X} = \Gr(k,n)$. We delay a proof of this fact until Lemma \ref{dualclust} below.


To describe the compactifying divisor $D = X \setminus X^\circ$, recall that $X$ has a homogeneous coordinate ring whose homogeneous monomials are the Pl\"ucker coordinates. These coordinates are indexed by sets $J \in \binom{[n]}{n-k}$, by taking the associated $(n-k)\times (n-k)$ minor of a matrix representing a point of $X$. It is typical to further identify the set of Pl\"ucker coordinates with the set $\mathcal{P}_{k,n}$ of Young diagram in a $(n-k)\times k$ box. We denote by $P_\mu$ the Pl\"ucker coordinate corresponding to a Young diagram $\mu$. There is a natural bijection between Young diagrams in $\mathcal{P}_{k,n}$ and subsets of $[n]$ of size $n-k$, given by sending a Young diagram to the collection of southward steps in the path along the lower border of the diagram starting at the northeast corner of the diagram and ending in the southwest corner. Moreover, the components of $D$ are given by the vanishing of $n$ Pl\"ucker coordinates $P_{\mu_i}$ with $\mu_i$ a rectangular Young diagram with \emph{westward} steps given by the length $k$ interval $[i+1,i+k]\subset [n]$ equipped with its cyclic order. We note that that $\mu_{n-k}$ is a maximal rectangle in this collection of dimension $(n-k) \times k$. 

Additionally we note that $\mathcal{P}_{k,n}$ also indexes Pl\"ucker coordinates on the dual Grassmannian $\check{X}$, by associating to a Young diagram $\mu$ the subset $J_{\mu} \in \binom{[n]}{k}$ given by taking the collection of westward steps in the path along the lower border of $\mu$ starting in the northeast corner and ending in the southwest corner. The Pl\"ucker coordinates on the dual Grassmannian will be denoted by $p_\mu$ for $\mu \in \mathcal{P}_{k,n}$. 

In general, the divisor $D$ is not normal crossings, so $(X,D)$ is not a log Calabi-Yau pair. However, there exists a birational model $(Y,D')$ which is an isomorphism on $X^\circ$ which is a log Calabi-Yau pair. Indeed, away from codimension $2$, $X\setminus D$ is a cluster variety, hence in particular posses a unique non-vanishing holomorphc volume form which restricts to the standard volume form on a dense algebraic torus. It follows from this that $U$ is log Calabi-Yau. By passing to a resolution of singularities of $D\subset X$, we find a simple normal crossings compactification of $X\setminus D$, $(Y,D')$, which is a log Calabi-Yau pair. 

By Theorem \ref{mthm1}, we know there exists an element $W_D \in R_{(Y,D')}$ which is weakly mirror to the compactification $X^\circ\subset X$, and can be expressed in terms of the theta function basis for $R_{(Y,D')}$. In order to relate $W_D$ to a function on the dual Grassmanian, we consider seed tori of the cluster varieties on either side of the mirror. 

We first recall the $\mathcal{A}$ cluster structure on the coordinate ring of $X^\circ$. A collection of seeds for the $\mathcal{A}$ cluster structure are described by \emph{plabic graph} $G$, certain bicolored graphs embedded in a disk, see \cite[Section $3$]{NOmirr} for details. To every plabic graph $G$, there is a set of Young diagram $\widetilde{\mathcal{A}Coord}(G)(X)$ which are in bijection with monomials for the associated torus inclusion $\mathcal{T}_G \rightarrow X^\circ$. A special property of the $\mu_i$ associated with components of $D_i$ is that $\mu_i \in \widetilde{\mathcal{A}Coord}(G)(X)$ for all plabic graphs $G$. For a given plabic graph $G$, we fix an enumeration of $\widetilde{\mathcal{A}Coord}(G)(X)$ with $\mu_j$ for $1\le j \le n$ given by the rectangular diagram indexing the components of the divisor $D$. The coordinates for $\mathcal{T}_G$ are given by:

\[\mathcal{A}Coord(G)(X) = \{\frac{P_{\mu}}{P_{\mu_{n-k}}}\text{ }\mid\text{ }\mu \in \widetilde{\mathcal{A}Coord}(G)(X)\setminus \{\mu_{n-k}\}\}.\]
Analogously, the plabic graph $G$ also encodes an $\mathcal{A}$ cluster chart for $\check{X}^\circ$, with coordinates on $\mathcal{T}_G$ given by:
\[\mathcal{A}Coord(G)(\check{X}) = \{\frac{p_{\mu}}{p_{\emptyset}}\text{ }\mid\text{ }\mu \in \widetilde{\mathcal{A}Coord}(G)(\check{X})\setminus \{\mu_{\emptyset}\}\}.\]

Using the torus chart $\mathcal{T}_{G} \subset X^\circ$, we can identify the integral points of the essential skeleton of $X^\circ$ with the cocharacter lattice of the torus $\mathcal{T}_{G}$. Under this identification, the theta functions of the mirror are indexed by cocharacters of $\mathcal{T}_{G}$. Moreover, the superpotential $W_D$ can be expressed as a sum of theta functions $W_D = \sum_i \vartheta_{D_i}$, with $\vartheta_{D_i}$ the theta function corresponding to the divisorial valuation on $\kk(Y) = \kk(X)$ associated with the divisor $D_i$. In particular, denoting by $\nu_{\mu_i}$ the divisorial valuation associated with each divisor $D_{\mu_i}$ for $1\le i \le n$, restriction to the torus $\mathcal{T}_{G}$ induces a cocharacter determined by the following formula:

\begin{equation}\label{value}
\nu_{\mu_i}|_{\mathcal{T}_{G}}(\frac{P_{\mu_j}}{P_{\mu_{n-k}}}) = \delta_{i,j} - \delta_{i,n-k}.
\end{equation}

In the interest of working with the mirror to $X^\circ$, we now recall the proof which identifies the mirror with an analogous affine open of the Langlands dual variety $\Gr(n-k,n)$. 

\begin{lemma}\label{dualclust}
The open positroid varieties $X^\circ\subset \Gr(n-k,n)$ and $\check{X}^\circ \subset \Gr(k,n)$ are the affinizations of Fock-Goncharov dual cluster varieties of type $\mathcal{A}$ and type $\mathcal{X}$ respectively.
\end{lemma}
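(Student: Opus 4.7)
The plan is to fix a plabic graph $G$ for $X^\circ \subset \Gr(n-k,n)$ and extract the corresponding seed data on both sides of the mirror. By Scott's theorem, refined by Muller--Speyer and Galashin--Lam, the faces of $G$ index cluster variables of a type $\mathcal{A}$ cluster structure on $\mathcal{O}(X^\circ)$: the boundary faces give the frozen variables (in particular the $P_{\mu_i}$ cutting out the components of $D$), interior faces give the mutable variables, and the edges of $G$ produce a quiver with exchange matrix $B_G$. Since $\mathcal{O}(X^\circ)$ agrees with the upper cluster algebra of this seed, $X^\circ$ is the affinization of the corresponding $\mathcal{A}$-cluster variety $\mathcal{A}_G$; this handles the $\mathcal{A}$-side assertion of the lemma.

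Next, I would construct the Fock--Goncharov $\mathcal{X}$-dual $\mathcal{X}_G^\vee$ directly from the seed: replace each seed torus $\mathcal{T}_s$ by the dual torus $\mathcal{T}_s^\vee$ with cocharacter lattice $M_{\mathcal{T}_s}$, transpose the exchange matrix to $B_G^T$, and glue via $\mathcal{X}$-mutations. To identify $\mathcal{X}_G^\vee$ with $\check{X}^\circ \subset \Gr(k,n)$, I would invoke the well-known fact that the same plabic graph $G$, read through the natural complement duality $I \mapsto [n]\setminus I$ between the two Grassmannians (equivalently, by reversing vertex colors in $G$), produces a type $\mathcal{A}$ cluster structure on $\mathcal{O}(\check{X}^\circ)$ whose quiver is exactly $B_G^T$ and whose cluster variables are Pl\"ucker coordinates $p_\mu$ on $\check{X}$. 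Matching the $\mathcal{X}$-coordinates of $\mathcal{X}_G^\vee$ (the monomials $\prod_j a_j^{b_{ij}^T}$) with the corresponding normalized Pl\"ucker monomials on $\check{X}^\circ$ gives a birational isomorphism on the distinguished seed torus, and compatibility with mutation (which is built into the construction of both sides) extends this to an isomorphism of the cluster varieties themselves. Taking affinizations yields the identification $\check{X}^\circ \cong \operatorname{Spec}\mathcal{O}(\mathcal{X}_G^\vee)$.

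The main obstacle lies in the third step. Two potential pitfalls: first, correctly matching sign and orientation conventions between the plabic-graph quiver on the $\Gr(k,n)$-side and the transpose $B_G^T$ demanded by Fock--Goncharov duality, particularly across the frozen locus; second, promoting an isomorphism defined in codimension one to a global isomorphism of affine varieties. For the latter I would rely on normality and affineness of both open positroid varieties together with the identification of $\mathcal{O}(\check{X}^\circ)$ as the upper cluster algebra of the dual seed, which forces agreement once we have it on a dense torus. A more direct route, if one wishes to avoid the combinatorial bookkeeping, is to invoke the Galashin--Lam identification of the two positroid cluster structures, reducing the lemma to unpacking the definition of Fock--Goncharov duality for a single seed.
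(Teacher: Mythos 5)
Your proposal follows essentially the same strategy as the paper: establish the cluster structures on both open positroid varieties from plabic graph data, match the underlying quivers, and then use affineness of the positroid varieties to upgrade a codimension-$\geq 2$ agreement of cluster varieties to an equality of rings of global functions, concluding via the definition of Fock--Goncharov duality (\cite[Definition A.4]{bases_cluster}). The paper's proof is much shorter because it simply cites sections $5$ and $6$ of \cite{NOmirr}, where Rietsch and Williams already establish, in one stroke, the $\mathcal{A}$-cluster structure on $X^\circ$ via Pl\"ucker seeds and the $\mathcal{X}$-cluster structure on $\check{X}^\circ$ via network charts, both associated with the same plabic graph $G_{rec}$ and with the same quiver $Q(G_{rec})$.

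One thing to flag in your step 3: you propose to identify the $\mathcal{X}$-coordinates on $\mathcal{X}_G^\vee$ with Pl\"ucker monomials on $\check{X}^\circ$ via the formula $\prod_j a_j^{b_{ij}^T}$, but this is the $p$-map from the $\mathcal{A}$-cluster side of $\check{X}^\circ$, which is generally not an isomorphism of tori. What you actually want to use is the network torus of Rietsch--Williams on $\check{X}^\circ$, which directly carries $\mathcal{X}$-coordinates and furnishes the required $\mathcal{X}$-cluster variety structure without detouring through the $\mathcal{A}$-cluster structure on $\check{X}^\circ$. Your closing suggestion to invoke the Galashin--Lam/Rietsch--Williams identification directly is exactly what the paper does, and it sidesteps this bookkeeping entirely. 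Also, for the skew-symmetric case the transpose $B_G^T = -B_G$ simply reverses all arrows, so the ``same quiver'' statement in the paper and your ``transposed exchange matrix'' are reconciled by the sign conventions in \cite[Appendix A]{bases_cluster}; worth making explicit, but not a gap.
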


\begin{proof}
As described in sections $5$ and $6$ of \cite{NOmirr}, the open positroid varieties $X^\circ$ and $\check{X}^\circ$ after removing a codimension $2$ subschemes have $\mathcal{A}$ and $\mathcal{X}$ cluster structures with initial seeds $\mathcal{T}_{rec}\subset X^\circ,\mathcal{T}_{rec}^\vee\subset \check{X}^\circ$ respectively, and the same underlying quiver $Q(G_{rec})$. In particular, the ring of global functions on the cluster varieties are equal to the ring of global functions on the dual open positroid varieties. Since the open positroid varieties are affine, the lemma follows by definition of Fock-Goncharov dual cluster varieties, i.e. \cite[Definition $\text{A}.4$]{bases_cluster}.
\end{proof}

The dual torus $\mathcal{T}_{G}^\vee$ gives a corresponding seed of the $\mathcal{X}$ cluster structure on $\check{X}^\circ$, which is typically referred to as a network chart. To express the theta function mirror $W_D$ of $X$ in terms of Pl\"ucker coordinates on $\check{X}$, we will express each theta function $\vartheta_{D_i}$ contributing a term to $W_D$ as a rational monomial of Pl\"ucker coordinates. First, for $\val_G$ the valuation defined by Rietsch and Williams in \cite{NOmirr}, \cite[Theorem $16.15$]{NOmirr} shows $\val_G(\vartheta_m) = \val_G(\vartheta_{m'})$ if and only if $m = m'$. Moreover, Equation \ref{value} determines how the cocharacter $\val_G(\vartheta_{\mu_j})$ pairs with all characters on $\mathcal{T}_G$, hence uniquely determines $\mu_j$. Hence to show a monomial in Pl\"ucker coordinates $P$ is equal to $\vartheta_{\mu_j}$ as a regular function on $\check{X}^\circ$, it suffices to verify that the monomial is a theta function, and after restricting to the seed torus $\mathcal{T}^{\vee}_{G}$ for some plabic graph $G$, we have $\val_G(P)$ is given by Equation \ref{value}.



To construct monomials satisfying the first condition, we note that by \cite[Theorem $11.1$]{LGgr}, for any Pl\"ucker coordinate $p_\mu$ on $\check{X}$, there exits a plabic graph $G$ such that $\mu \in \widetilde{\mathcal{A}Coord}(G)(\check{X})$, hence $\frac{p_\mu}{p_{\emptyset}}$ restricts to a monomial on a seed torus for the $\mathcal{A}$ cluster structure on $\check{X}$. Since $\frac{p_{\mu_i}}{p_{\mu_{\emptyset}}} \in \mathcal{A}Coord(G)(\check{U})$ for all plabic graphs $G$, we have $\frac{p_\mu}{p_{\mu_i}}$ restricts to a Laurent monomial on $\mathcal{T}_{G}^{\vee} \subset \check{U}$. Since $\frac{p_\mu}{p_{\mu_i}}$ is regular on $\check{U}$ and every regular function on $\check{U}$ which restricts to a Laurent monomial on a seed torus is a theta function on $\check{X}^\circ$, all of the ratios of Pl\"ucker coordinates above are theta functions. 

To pick a particular diagram relevant for finding a Pl\"ucker expression for $\vartheta_{D_i}$, for $\mu_i$ a Young diagram with westwards steps given by the interval $[i+1,i+k]$ in $[n]$ interpreted cyclically, we define a new Young diagram $\mu_i^\Box$ with westward steps given by:
 $$ [i+1,i+k-1]\cup \{i+k+1\}.$$
  For $\mu_i$ not a full box, $\mu_i^{\Box}$ is the unique Young diagram such that $\mu_i^{\Box}\setminus \mu_i$ consists of a single box. When $\mu_i$ is a full box, $\mu_i^{\Box}$ is the complement in $\mu_i$ of the outer rim.

 We also will require a slight variant \cite[Lemma $19.3$]{NOmirr}. We first recall a definition and a theorem from \cite{NOmirr} which gives an expression for the valuations of Pl\"ucker coordinates:

\begin{definition}[Definition 14.3 \cite{NOmirr}]
Given two Young diagrams $\lambda,\mu$ contained in a  $(n-k)\times k$ rectangle, let $\mu\setminus \lambda$ denote the corresponding skew diagram, i.e. the set of boxes remaining if we justify both $\mu$ and $\lambda$ at the top left of a $(n-k)\times k$ rectangle and removing boxes of $\mu$ which are contained in $\lambda$. Let $\text{MaxDiag}(\mu\setminus \lambda)$ be the maximum number of boxes contained in an antidiagonal of the rectangle.
\end{definition}

\begin{lemma}[Theorem 15.1 \cite{NOmirr}]\label{mdiag}
Let $G$ be any reduced plabic graph associated with a seed of the $\mathcal{X}$ cluster structure on $\kk[\check{X}^\circ]$ and $\lambda$ a Young diagram in a $(n-k)\times k$ rectangle. Then for any $\mu \in \mathcal{P}_G$:
\[\val_G(\frac{p_\lambda}{p_{\emptyset}})_\mu = \operatorname{MaxDiag}(\lambda\setminus \mu).\]
\end{lemma}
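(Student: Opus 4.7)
The plan is to invoke \cite[Theorem 15.1]{NOmirr} essentially verbatim, since the statement of the lemma is drawn directly from that reference. To give context for the formula, I outline the combinatorial setup one would use in a self-contained argument.

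First, I would recall the boundary measurement parametrization of the seed torus $\mathcal{T}_G^\vee \subset \check{X}^\circ$ attached to a reduced plabic graph $G$, whose coordinates are indexed by the internal faces of $G$, equivalently by Young diagrams $\mu \in \mathcal{P}_G$. A Lindström--Gessel--Viennot type identity expresses each Plücker coordinate $p_\lambda$ restricted to $\mathcal{T}_G^\vee$ as a positive sum of Laurent monomials in the face variables, where the sum ranges over non-crossing flows in $G$ from a fixed source set to the target set encoded by $\lambda$. Dividing by $p_{\emptyset}$ yields a positive Laurent expansion of $p_\lambda/p_{\emptyset}$ on $\mathcal{T}_G^\vee$.

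By definition, the valuation $\val_G$ of \cite{NOmirr} extracts, for each face $\mu \in \mathcal{P}_G$, the minimum exponent of the corresponding face variable among all monomials appearing in such a positive Laurent expansion. The content of the lemma is then the identification of this minimum with $\operatorname{MaxDiag}(\lambda\setminus \mu)$: one exhibits a flow whose monomial realizes a minimal exponent at $\mu$ by arranging the flow to traverse the skew shape $\lambda\setminus \mu$ along an antidiagonal of maximal length, and then shows conversely that every admissible non-crossing flow configuration contributes at least this much to the exponent at $\mu$.

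The main obstacle in a self-contained proof would be precisely this combinatorial identification of minimizing flows with maximal antidiagonal fillings of $\lambda\setminus \mu$, which requires a careful case analysis of how flows in a reduced plabic graph interact with individual faces, and a compatibility check under moves between reduced plabic graphs representing different seeds. Since this is exactly the combinatorics carried out in \cite[\S 15]{NOmirr}, the cleanest route here is to invoke that theorem directly rather than reproduce the argument.
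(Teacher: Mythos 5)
The plan to cite \cite[Theorem $15.1$]{NOmirr} ``verbatim'' does not actually close the argument, because the lemma as stated here is \emph{not} literally Theorem $15.1$ of \cite{NOmirr}. In that reference the valuation formula is stated with Pl\"ucker coordinates indexed by Young diagrams in $\mathcal{P}_{n-k,n}$, normalized by the coordinate $p'_{\max}$ corresponding to the full $(k)\times(n-k)$ rectangle, and the combinatorial quantity appearing is $\operatorname{MaxDiag}(\mu\setminus\lambda)$ --- with $\mu$ subtracted from, not by. The present lemma instead indexes by $\mathcal{P}_{k,n}$, normalizes by $p_{\emptyset}$, and asserts the formula with the skew shape reversed to $\lambda\setminus\mu$. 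These are genuinely different formulas, related by a non-trivial reindexing, and that reindexing is precisely the content of the paper's proof. Simply pointing at the theorem leaves this translation unaccounted for.

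Concretely, the paper's argument identifies the bijection $\sigma\colon\mathcal{P}_{n-k,n}\to\mathcal{P}_{k,n}$ induced by $p'_{\lambda}=p_{\sigma(\lambda)}$, observes that it is given by complementing in the full rectangle and reflecting across the antidiagonal (equivalently, exchanging the roles of southward and westward steps in the lattice-path encoding), checks that this operation satisfies $\operatorname{MaxDiag}(\mu\setminus\lambda)=\operatorname{MaxDiag}(\sigma(\lambda)\setminus\sigma(\mu))$ since reflection preserves antidiagonal lengths while complementation swaps the order of set subtraction, and finally notes that $\sigma(\mu_{\max})=\emptyset$ so the normalizations agree. Your outline of the flow-polynomial/Lindstr\"om--Gessel--Viennot combinatorics behind \cite[\S15]{NOmirr} is a reasonable sketch of how one would reprove the cited theorem in its original form, but it is orthogonal to what the present lemma actually requires: a clean bookkeeping argument passing between the two dual Pl\"ucker indexings. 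You should supply that bookkeeping rather than treat the cited theorem as already stating the formula you need.
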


\begin{proof}
The Pl\"ucker coordinates $p_{\lambda}$ on $\Gr(k,n)$ indexed by Young diagrams in $\mathcal{P}_{k,n}$ are also indexed by Young diagrams in $\mathcal{P}_{n-k,n}$. We refer to a Pl\"ucker coordinate associated with a Young diagram $\lambda \in \mathcal{P}_{n-k,n}$ by $p'_{\lambda}$. In terms  of these coordinates, we have by \cite[Theorem $15.1$]{NOmirr} that for $\lambda,\mu \in \mathcal{P}_{n-k,n}$:
\[\val_G(\frac{p_\lambda'}{p_{\text{max}}'})_\mu = \text{MaxDiag}(\mu\setminus \lambda),\]
where $\mu_{\text{max}}\in \mathcal{P}_{n-k,n}$ is the filled $k \times (n-k)$ rectangle. The bijection $\sigma: \mathcal{P}_{n-k,n} \rightarrow \mathcal{P}_{k,n}$ induced by $p'_{\lambda} = p_{\sigma(\lambda)}$ is given by reflecting $(\mu_{\max}\setminus \lambda)$ across the antidiagonal. Indeed, this operation sends a Young diagram with lower boundary containing southward path segments given by a subset $J \subset [n]$ to the Young diagram with lower boundary containing westward path segments in the subset $J \subset [n]$. We now observe that:

\[\text{MaxDiag}(\mu\setminus\lambda) = \text{MaxDiag}(\sigma(\lambda)\setminus\sigma(\mu)).\]
Since $p'_{\mu_{\text{max}}} = p_{\sigma(\mu_{\text{max}})} = p_{\emptyset} $, the conclusion follows from the above equation.

\end{proof}

The following Lemma establishes that a certain ratio of Pl\"ucker coordinates has valuation matching $\vartheta_{D_i}$:



\begin{lemma}\label{valgrass}
Fix $i \in \{0,1,\ldots,n-1\}$ and let $G$ be a plabic graph and $e^{(j)} = \frac{P_{\mu_j}}{P_{\mu_{n-k}}}|_{\mathcal{T}_G}$ with $\mu_j \in \widetilde{\mathcal{A}Coord}(G)(X)$ a character on the seed torus $\mathcal{T}_{G}$ for the $\mathcal{A}$ cluster structure on $X^\circ$, hence a cocharacter on the seed torus $\mathcal{T}_{G}^{\vee}$ for the $\mathcal{X}$ cluster structure on $\check{X}^\circ$. Then we have:
\begin{equation}\label{thetaval}
\val_G(\frac{p_{\mu_i^\Box}}{p_{\mu_i}})(e^{(j)}) = \delta_{i,j} - \delta_{i,n-k} = \val_G(\vartheta_{D_i})(e^{(j)}).
\end{equation}
\end{lemma}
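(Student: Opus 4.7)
The statement asserts two equalities. The right-hand equality, $\val_G(\vartheta_{D_i})(e^{(j)}) = \delta_{i,j} - \delta_{i,n-k}$, is essentially a reformulation of Equation \eqref{value}: under the identification of $\val_G(\vartheta_{D_i})$ with the divisorial valuation $\nu_{\mu_i}$ on $\kk(X) = \kk(\check{X})$ provided by the Rietsch--Williams construction (and used already in the discussion preceding the lemma), the paired value on the character $e^{(j)} = \frac{P_{\mu_j}}{P_{\mu_{n-k}}}\big|_{\mathcal{T}_G}$ is exactly $\nu_{\mu_i}\!\left(\frac{P_{\mu_j}}{P_{\mu_{n-k}}}\right)$, which is $\delta_{i,j}-\delta_{i,n-k}$ by \eqref{value}. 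This takes no further work.

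The content of the lemma is the left-hand equality. My plan is to write
\[
\frac{p_{\mu_i^\Box}}{p_{\mu_i}} \;=\; \frac{p_{\mu_i^\Box}/p_{\emptyset}}{p_{\mu_i}/p_{\emptyset}},
\]
so that Lemma \ref{mdiag} applies to numerator and denominator separately. Taking valuations, the claim reduces to the purely combinatorial identity
\[
\operatorname{MaxDiag}(\mu_i^\Box \setminus \mu_j) \;-\; \operatorname{MaxDiag}(\mu_i \setminus \mu_j) \;=\; \delta_{i,j} - \delta_{i,n-k},
\]
to be verified for Young diagrams $\mu_i, \mu_i^\Box, \mu_j$ in the $(n-k)\times k$ rectangle described in the setup.

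I would then split into two cases according to whether $i = n-k$. For $i \ne n-k$ the diagram $\mu_i^\Box = \mu_i \cup \{b_i\}$ differs from $\mu_i$ by adjoining a single box $b_i$; when $j = i$ the skew shape $\mu_i \setminus \mu_j$ is empty while $\mu_i^\Box \setminus \mu_j = \{b_i\}$, giving difference $1 = \delta_{i,i}$, whereas for $j \ne i$ one checks that either $b_i \in \mu_j$ (the two skew shapes coincide) or $b_i \notin \mu_j$ but its addition does not extend a maximal antidiagonal, so the difference is $0$. For $i = n-k$ the diagram $\mu_i$ is the full rectangle and $\mu_i^\Box$ is obtained by removing the outer rim; when $j = n-k$ both skew shapes are empty, while for $j \ne n-k$ a direct inspection shows that removing the outer rim shortens every maximal antidiagonal of $\mu_i \setminus \mu_j$ by exactly one box, yielding the required $-1$.

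The main obstacle is the combinatorial bookkeeping of maximal antidiagonals under adding the single box $b_i$ or removing the outer rim, particularly confirming the subtle subcase $j \ne i$ (with $i \ne n-k$) where $b_i \notin \mu_j$. Since this is precisely the content of \cite[Lemma 19.3]{NOmirr} (to which the text just above the statement alludes as a ``slight variant''), I would invoke that lemma directly where it applies and handle the one-box modification relating $\mu_i^\Box$ to $\mu_i$ by adapting its proof, rather than re-deriving the antidiagonal analysis from scratch.
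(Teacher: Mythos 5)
Your proposal is correct and takes essentially the same route as the paper: it reduces the left-hand equality via Lemma \ref{mdiag} to the combinatorial identity $\operatorname{MaxDiag}(\mu_i^\Box\setminus\mu_j)-\operatorname{MaxDiag}(\mu_i\setminus\mu_j)=\delta_{i,j}-\delta_{i,n-k}$ and splits into the cases $i\ne n-k$ (single added box) and $i=n-k$ (outer rim removed). The paper handles the subtle subcase $j\ne i$, $b_i\notin\mu_j$ in a self-contained way by observing that $b_i$ lies on no antidiagonal shared with any other box of $\mu_i^\Box$, giving $\operatorname{MaxDiag}(\mu_i^\Box\setminus\mu_j)\le\max\{1,\operatorname{MaxDiag}(\mu_i\setminus\mu_j)\}$, where you instead propose to lean on \cite[Lemma 19.3]{NOmirr}; either is fine.
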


\begin{proof}
By Lemma \ref{mdiag}, we can compute:
\[\val_G(\frac{p_{\mu_i}^{\Box}}{p_{\mu_i}})(e^{(j)}) = \text{MaxDiag}(\mu_i^{\Box}\setminus \mu_j)-\text{MaxDiag}(\mu_i\setminus \mu_j).\]
Thus, to compute $\val_G(\frac{p_{\mu_i^\Box}}{p_{\mu_i}})(e^{(j)})$, we must consider when $\text{MaxDiag}(\mu_i^{\Box}\setminus \mu_j)$ differs from $\text{MaxDiag}(\mu_i\setminus \mu_j)$. 

We first suppose $i \not= n-k$, hence $\mu_i$ is not a filled $(n-k)\times k$ rectangle and $\mu_i^{\Box}\setminus \mu_i$ contains a single box. Since $\mu_i \subset \mu_i^{\Box}$, we have:
\[\mu_i \setminus \mu_j \subset \mu_i^{\Box}\setminus \mu_j.\]
In particular, we have $\text{MaxDiag}(\mu_i \setminus \mu_j) \le \text{MaxDiag}(\mu_i^{\Box}\setminus \mu_j)$. Since the additional box of $\mu_i^{\Box}$ is not contained in the same antidiagonal as any other box of $\mu_i^{\Box}$, we have: 
 \[\text{MaxDiag}(\mu_i^{\Box}\setminus \mu_j) \le \text{max}\{1,\text{MaxDiag}(\mu_i\setminus \mu_j)\}.\]
Thus, to have $\text{MaxDiag}(\mu_i^{\Box} \setminus \mu_j) > \text{MaxDiag}(\mu_i\setminus \mu_j)$, we must have $\text{MaxDiag}(\mu_i\setminus \mu_j) = 0$. Assuming $\text{MaxDiag}(\mu_i^{\Box}\setminus \mu_j) \not= 0$, this can only happen if $i = j$. This verifies the formula when $i \not= k$. 

When $i = n-k$, we have $\mu_{i}$ is a filled $(n-k)\times k$ rectangle, and $\mu_{i}^{\Box}$ is the sub diagram with a box removed from each antidiagonal from the right and bottom edges. It follows that for any diagram $\mu_j$, $\text{MaxDiag}(\mu_i\setminus \mu_j) = 1+\text{MaxDiag}(\mu_i^{\Box}\setminus \mu_j)$ when $j \not= i$, and $0 = \text{MaxDiag}(\mu_i\setminus \mu_j) = \text{MaxDiag}(\mu_i^{\Box}\setminus \mu_j)$ when $i = j$. The left most equality of Equation \ref{thetaval} now follows. The rightmost equality of Equation \ref{thetaval} follows from $\val_G(\vartheta_{D_i}) = \phi(\rho_{D_i})$ for $\rho_{D_i} \in \Sigma(X)(\NN)$ the primitive point associated with the divisorial valuation of $D_i$ and Equation \ref{value}.
\end{proof}

As a result, we find $\val_G(\vartheta_{D_i}) = \val_G(\frac{p_{\mu_i^\Box}}{p_{\mu_i}})$, hence $\vartheta_{D_i} = \frac{p_{\mu_i^\Box}}{p_{\mu_i}}$ and $W_D = \sum_i \frac{p_{\mu_i^\Box}}{p_{\mu_i}} \in \kk[\check{X}^\circ]$, recovering the Pl\"ucker superpotential found by Marsh and Rietsch in \cite{LGgr} after setting $q =1$ for $q$ the quantum parameter of the superpotential of op. cit. Since the Fock-Goncharov dual cluster algebra $\kk[\check{X}^\circ]$ is the ring of functions on the general fiber of the mirror to $X^\circ$, Theorem \ref{mthm1} implies the classical periods of $W_D$ give the regularized quantum periods after setting all Novikov parameters $t^{\textbf{A}}\in \kk[H_2^+(X)]$ to $1$.

To recover the LG superpotential $W_q \in \kk[\check{X}][q]$ with Novikov parameters $q^d$ keeping track of degree $q$ with respect to the Pl\"ucker polarization, note that since the Picard rank of $X$ is $1$, and all divisors $D_i$ represent the same primitive element of $\Pic(X)$, the only non-trivial classical periods come from products $\prod_i\vartheta_{D_i}^l$ for some $l \ge 0$. Moreover, Marsh and Rietsch identify the restriction of $W_D$ to a certain seed torus with a Laurent mirror to $X$, $W_{s} \in \kk[z_1^{\pm 1},\ldots,z_{k(n-k)}^{\pm 1}]$, originally conjectured by Eguchi, Hori and Xiong, which satisfies $W_s^n[z^{\vec{0}}] \not= 0$. It follows that $W_{D,q}^n[\vartheta_0] = n_1q \not= 0$ for any lift $W_{q} \in \kk[\check{X}^\circ][q]$ of $W_{D}\in \kk[\check{X}^\circ]$ whose classical periods recover the regularized quantum periods without fixing Novikov parameters. This can only happen if $W_{q}$ contains exactly one term of the form $q\vartheta_{D_i}$, and all other terms of $W_D$ are decorated with the trivial curve class. Up to a global automorphism of $\check{X}$ which cyclically permutes the components of $\check{D}\subset \check{X}$, we recover the LG superpotential of \cite{LGgr}.

Additionally, since the restriction of these theta functions to a $\mathcal{A}$-cluster seed torus in $\check{X}^\circ$ are monomials, the divisorial valuations $\nu_{D_i}$ all have optimized seeds, and the results of Section $6$ apply. In particular, we recover the Newton-Okounkov bodies and toric degenerations of $X$ constructed by Rietsch and Williams in \cite{NOmirr}.



Finally, we recall that the Laurent polynomial $p_J^G$ given by restricting the Pl\"ucker coordinate $p_J$ to an $\mathcal{X}$-cluster chart associated with a plabic graph $G$ is referred to as flow polynomial. The coefficients of this polynomial have an enumerative description in terms of certain perfect matchings on the graph $G$. We observe in the following proposition an additional enumerative interpretation for the coefficients of $p_J^{G}$ in terms of non-archimedean disk counts in the Berkovich analytic space $(\Gr(n-k,n)^{\circ})^{\an}$ over the trivially valued field $k$:

\begin{proposition}
For any choice of plabic graph $G$ and $\vartheta_{D_i} = \frac{p_{\mu_i^{\Box}}}{p_{\mu_i}}$ a summand in the mirror Pl\"ucker superpotential to $\Gr(n-k,n)$, there exists $x \in M_{\mathbb{R}}$ such that the coefficients of the flow polynomial $\frac{p_{\mu_i^{\Box}}^G}{p_{\mu_i}} \in \kk[\mathcal{T}^{\vee}_G]$ are counts of non-archimedean cylinders in $(X^\circ)^{\an}$ defined in \cite{archmirror}:
\[\frac{p_{\mu_i^{\Box}}^G}{p_{\mu_i}^G}  = \sum_{e \in M,S \in SP_{x,\nu_{\mu_i},e},A \in \NE(Y)} N(S,A)z^e,\]
with $SP_{x,\nu_{\mu_i},e}$ a set of potential spines of non-archimedean cylinders as defined in Definition $20.20$ of op. cit. and $N(S,Y)$ the count of non-archimedean cylinders in $(\Gr(n-k,n)^{\circ})^{\an}$ with spine $S$.
\end{proposition}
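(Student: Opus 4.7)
The plan is to combine the identification $\vartheta_{D_i} = \frac{p_{\mu_i^\Box}}{p_{\mu_i}}$ established via Lemma \ref{valgrass} with the non-archimedean enumerative characterization of theta functions on affine log Calabi-Yau varieties developed in \cite{archmirror}. The starting observation is that after restriction to the seed torus $\mathcal{T}_G^{\vee} \subset \check{X}^\circ$, the Pl\"ucker coordinate $p_J$ becomes the flow polynomial $p_J^G$ by definition, so the identification of global functions gives $\vartheta_{D_i}|_{\mathcal{T}_G^\vee} = \frac{p_{\mu_i^\Box}^G}{p_{\mu_i}^G}$. Hence proving the desired expression for the flow polynomial reduces to producing an enumerative expansion for the restriction of the theta function $\vartheta_{D_i}$ to $\mathcal{T}_G^\vee$.

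For this I would invoke the main construction of \cite{archmirror}: for $U$ an affine log Calabi-Yau variety with maximal boundary, theta functions can be defined and evaluated at a general non-archimedean point $x \in (\mathcal{T}_G^\vee)^{\an}$ via a weighted count of non-archimedean cylinders in $U^{\an}$ whose spine connects the chosen valuation (here $\nu_{\mu_i}$, indexing the theta function) to $x$, and whose combinatorial type is encoded by the data of $SP_{x,\nu_{\mu_i},e}$. Concretely, evaluating $\vartheta_{D_i}$ on a point of $\mathcal{T}_G^{\vee}$ with cocharacter component $e \in M$ expresses the coefficient of $z^e$ in the Laurent expansion as the sum $\sum_{S,A} N(S,A)$ over all admissible spines $S \in SP_{x,\nu_{\mu_i},e}$ and all compatible curve classes $A \in \NE(Y)$. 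Combining this with the identification of the previous paragraph gives precisely the displayed formula.

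Thus the steps I would carry out are, in order: (i) recall the identification $\vartheta_{D_i}|_{\mathcal{T}_G^\vee} = \frac{p_{\mu_i^\Box}^G}{p_{\mu_i}^G}$ established above; (ii) check that the positroid open $X^\circ$ (being an affine log Calabi-Yau variety whose $\mathcal{A}$-cluster structure guarantees the existence of seed tori compatible with the Keel-Yu framework) satisfies the hypotheses under which the non-archimedean cylinder count formula of \cite{archmirror} applies to $\vartheta_{D_i}$; (iii) choose $x \in M_\mathbb{R}$ as a very general base point in the essential skeleton coming from the generic fiber of the tropicalization of $\mathcal{T}_G^\vee$, and apply the main enumerative theorem of \cite{archmirror} to expand $\vartheta_{D_i}|_{\mathcal{T}_G^\vee}$ in this chart. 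The main obstacle I anticipate is step (ii): one must verify that $X^\circ$ (or an appropriate birational log Calabi-Yau model such as $(Y,D')$ already used in the proof of Theorem \ref{mthm1}) fits into the precise setup of \cite[Section 20]{archmirror}, and that the identification $\vartheta_{D_i} = \frac{p_{\mu_i^\Box}}{p_{\mu_i}}$ obtained via the valuation criterion of Lemma \ref{valgrass} is compatible with the theta function defined by Keel-Yu's cylinder count, rather than merely the intrinsic theta function of \cite{int_mirror}. This compatibility should follow from the comparison between the Gross-Siebert mirror construction and the Keel-Yu construction proved in \cite{mirrcomp}, applied to the seed-torus restriction.
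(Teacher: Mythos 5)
Your proposal correctly identifies the two main ingredients — the identification $\vartheta_{D_i}|_{\mathcal{T}_G^\vee} = p_{\mu_i^\Box}^G/p_{\mu_i}^G$, and the need to invoke the non-archimedean cylinder-count description of theta functions from \cite{archmirror} — and you also correctly flag the compatibility issue as the crux. However, the route you propose for resolving that compatibility is not quite the one the paper takes, and in fact as stated it leaves a gap. The paper's proof goes through an intermediate step you omit: it first cites \cite[Theorem 6.2]{bases_cluster} to expand the cluster theta function $\vartheta_{D_i}$ as a sum of final monomials of broken lines in the \emph{cluster} scattering diagram on $\mathcal{T}_G$, with initial direction $\nu_{\mu_i}$ and endpoint $x$. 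It then invokes \cite[Theorem 21.27]{archmirror}, which identifies the cluster scattering diagram with the Keel--Yu scattering diagram built from counts of infinitesimal analytic cylinders. Only at that point does the broken-line expansion become, tautologically, the local theta function of \cite[Definition 20.20]{archmirror}.

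The reason your suggested fix is insufficient is that the comparison result you cite from \cite{mirrcomp} (Theorem \ref{enum} in the present paper) is a statement about theta-function \emph{structure constants} — i.e., the trace form — rather than about the full broken-line / spine expansion of a single theta function restricted to a torus chart. Knowing that the $\vartheta_0$-structure constants agree with Keel--Yu's naive counts does not by itself tell you that the coefficient of each $z^e$ in $\vartheta_{D_i}|_{\mathcal{T}_G^\vee}$ is a cylinder count indexed by a spine in $SP_{x,\nu_{\mu_i},e}$. The object that bridges these is the scattering diagram, and the right comparison is the identification of scattering diagrams (\cite[Theorem 21.27]{archmirror}), not the identification of structure constants. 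With that substitution, and with the broken-line step from \cite[Theorem 6.2]{bases_cluster} made explicit, your argument lines up with the paper's. One further small point: the displayed formula sums over all $A \in \NE(Y)$, which corresponds to the paper's remark that the broken-line expansion is compared with the local theta function ``after setting all curve classes equal to $0$''; your write-up should make this degree-collapse explicit so that the comparison with the honest integer-coefficient flow polynomial is legitimate.
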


\begin{proof}
By \cite[Theorem $6.8$]{NOmirr}, the flow polynomial $p_{\mu_i^{\Box}}^G$ is given by the restriction of the regular function $\frac{p_{\mu_i^{\Box}}}{p_{\emptyset}} \in \kk[\check{X}^\circ]$ to a network torus $\Phi_G: \mathcal{T}_G \rightarrow \check{X}^\circ$ associated with a plabic graph $G$. Since the regular functions $\vartheta_{D_i} = \frac{p_{\mu_i^{\Box}}}{p_{\mu_i}}$ are part of the canonical basis as previously established, \cite[Theorem $6.2$]{bases_cluster} states that there exists a point $x \in N_{\mathbb{R}}$ such that the restriction of these functions to the seed torus $\Phi_G$ is given by a sum of final monomials of broken lines in the scattering diagram from $\check{X}^\circ$ with initial direction given by the cocharacter $\nu_{\mu_i}$ in the seed torus $\mathcal{T}_G$ and final position given by $x$. Moreover, by \cite[Theorem $21.27$]{archmirror}, the scattering diagram whose chambers encode the seeds of the cluster variety $\check{X}^\circ$ is also the scattering diagram constructed in loc cit. coming from counts of infinitesimal analytic cylinders in $(\Gr(n-k,n)^{\circ})^{\an}$. In particular, the broken line expansion $\vartheta_{x,m}$ considered above is also the local theta function defined in Definition $20.20$ after setting all curve classes equal to $0$, yielding the required expression.

\end{proof}

%

\begin{remark}
In the symplectic setting, Castranovo proves in \cite[Proposition $3.5$]{hmsgrass} that the Laurent polynomial $W_D|_{\mathcal{T}_{rec}}$ is a generating function of Maslov index $2$ disk counts in the log Calabi-Yau $X^\circ$, with $\mathcal{T}_{rec}$ the seed torus of $\Gr(k,n)$ with cluster monomials indexed by rectangular Young diagrams. By combining the two propositions above, we derive an equality between the counts of non-archimedean disks above and counts of Maslov index $2$ disks. The Laurent polynomials also have a combinatorial interpretation in terms of counts of perfect matchings. It would be interesting to upgrade the equality of numbers to an explicit bijection between perfect matchings and non-archimedean disks.
 
In order to generalize the argument above to produce Laurent mirrors to other homogeneous flag varieties, we need a similar explicit understanding of both the cluster structure of $X^\circ$ and the valuations of theta functions on the mirror. In particular, recent work of Spacek and Wang in \cite{LGcomin} construct type independent Pl\"ucker mirrors for cominiscule homogeneous varieties $X = G/P$. It would be interesting if the arguments of this section could be generalized to hold in this general setting.
\end{remark}

\section{Frobenius Structure conjecture for smooth pairs $(X,D)$}

We now consider the opposing situation to what we considered above, namely let $X$ be a Fano and $D \in |-K_X|$ be a smooth representative. Then we may construct an algebra $R_{(X,D)}$ associated with this pair, which is equipped with a theta function basis $\vartheta_n$ for $n \in \Sigma(X,D)(\ZZ) = \NN$. In this case, the algebra is always isomorphic to $\kk[\vartheta_1]$, with $\vartheta_1$ the theta function associated with the primitive lattice point of $\Sigma(X,D)$. However, the theta basis is different from the power basis, and structure constants in the former basis encode non-trivial log Gromov-Witten invariants of the pair $(X,D)$. Moreover, by Theorem \ref{mthm1}, we have $\vartheta_1^k[\vartheta_0] = k!N_k$. We wish to show that these structure constants uniquely determine all remaining structure constants in the theta function basis of $R_{(X,D)}$. In particular, the quantum periods of $X$ completely determine the theta basis for the mirror algebra $R_{(X,D)}$.

To show this, we consider a third class of log Gromov-Witten invariants:

\begin{definition}\label{2pinv}
For $p,q \in \NN$ and $\textbf{A} \in \NE(X)$, consider the moduli space $\mathscr{M}(X,\textbf{A},(p,q))$ of $2$-pointed log stable maps of curve class $\textbf{A}$ and contact orders with respect to $D$ given by $p$ and $q$ at marked points $x_p$ and $x_q$ respectively. Define the $2$-pointed log Gromov-Witten invariant:
\[N_{p,q}^\textbf{A} := \int_{[\mathscr{M}(X,\textbf{A},(p,q))]^{\vir}} \ev_{x_q}^*([\pt]),\]
and the element $N_{p,q} \in \mathbb{Q}[\NE(X)]$ to be the generating series $N_{p,q} = \sum_{\textbf{A} \in \NE(X)} N_{p,q}^\textbf{A}z^{\textbf{A}}.$

\end{definition}

These invariants were previously considered in \cite{2pointG}, \cite{Ysmooth}, \cite{GRZ1} and \cite{YouPLG}. In particular, these works give the following relationship between the theta function structure constants for $R_{(X,D)}$ and the invariants $N_{p,q}$:

\begin{theorem}\label{stconrel}
Let $(X,D)$ be a log Calabi-Yau pair with $D$ smooth and $p,q \in \Sigma(X)$, and consider the product $\vartheta_p\vartheta_q = \sum_i N_{p,q}^r\vartheta_r$. If $r \not= p+q$, then $N_{p,q}^r = [p-r]N_{q,p-r} + [q-r]N_{p,q-r}$, where $[b] = 0$ if $b < 0$. 
\end{theorem}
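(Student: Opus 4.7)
The plan is to prove the identity by reducing the three-pointed punctured log invariant computing $N_{p,q}^r$ to a combination of two-pointed log invariants via a degeneration to the normal cone of $D$.

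First, I would recall that for smooth $D$, the structure constant $N_{p,q}^{r,\textbf{A}}$ is defined as a log Gromov-Witten integral over the moduli of punctured log stable maps to $(X,D)$ with tropical type $\tau_{p,q,-r}$ consisting of two tangent markings of contact orders $p,q$ and an output puncture of contact order $-r$, with insertions cutting the virtual dimension to zero. The log balancing condition gives $D \cdot \textbf{A} = p+q-r$, so for $r \neq p+q$ the contributing classes $\textbf{A}$ are nonzero and effective.

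Next, I would degenerate the target $X$ to the normal cone of $D$, producing a log smooth family whose special fiber is $X \cup_D \mathbb{P}$ with $\mathbb{P} = \mathbb{P}(\mathcal{O}_D \oplus \mathcal{N}_{D/X})$, and apply the decomposition formula of \cite{decomp} to write $N_{p,q}^{r,\textbf{A}}$ as a sum over decorated rigid tropical types of log maps to the special fiber. A tropical analysis in the spirit of the proof of Theorem \ref{mthm1} shows that the point and $\psi$ insertions force the contributing types to have a very rigid structure: a main-component vertex $v_0$ mapping into $X$ carrying the full curve class $\textbf{A}$, and a single bubble vertex $v_1$ mapping into $\mathbb{P}$ carrying the output puncture $x_\out$. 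Stability then forces the bubble to carry exactly one of $x_p$ or $x_q$, joined to $v_0$ by a single edge.

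For the configuration with $x_p$ and $x_\out$ on the bubble, log balancing on the bubble fixes the contact order at the gluing node to equal $p-r$, which is only consistent when $p-r \geq 0$; an elementary evaluation of the rubber contribution on the bubble yields the multiplicity $[p-r]$, while the residual integral on the main component reduces to the two-pointed invariant $N_{q,p-r}^{\textbf{A}}$ — the remaining tangent marker $x_q$ of contact $q$, together with the gluing node now playing the role of a tangent marker of contact $p-r$ evaluated against a point. The symmetric configuration with $x_q$ and $x_\out$ on the bubble contributes $[q-r] N_{p,q-r}^{\textbf{A}}$. Assembling the two via the log gluing formula of \cite{trglue} yields the identity.

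The principal technical obstacle is the explicit evaluation of the rubber contribution on the bubble yielding the multiplicity $[p-r]$ (respectively $[q-r]$), together with the careful matching of obstruction theories across the gluing node so that the main-component integral recovers $N_{q,p-r}^{\textbf{A}}$ (respectively $N_{p,q-r}^{\textbf{A}}$) with the correct normalization. This is analogous to the bubble analysis carried out in the proof of Theorem \ref{mthm1}, and requires tracing the $\psi$-class insertion through the degeneration to confirm it reduces correctly on each side.
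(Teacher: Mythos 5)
The paper does not prove Theorem \ref{stconrel}. It is cited as a combination of results from \cite{2pointG}, \cite{Ysmooth}, \cite{GRZ1} and \cite{YouPLG}, and the paper's remark after Proposition \ref{LGres} indicates that the relevant result in this setting is obtained in \cite{YouPLG} via the log--orbifold correspondence of \cite{logroot2}, not by the degeneration you propose. So there is no internal proof to compare your argument against, and your proposal should be judged on its own terms.

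On those terms, there is a concrete problem with the bubble analysis that goes beyond the ``principal technical obstacle'' you flag. You claim the contributing bubble curve is a degree-$(p-r)$ cover of a fiber of $\mathbb{P} = \mathbb{P}(\mathcal{O}_D\oplus\mathcal{N}_{D/X})$ carrying the non-punctured marker $x_p$ with tangency $p$ at $D_\infty$ and the puncture $x_\out$ with log contact order $-r$. But for the non-punctured marker $x_p$ the scheme-theoretic tangency with $D_\infty$ must equal the log contact order $p$, while the total scheme-theoretic intersection number of a curve of class $(p-r)f$ with $D_\infty$ is only $p-r < p$ when $r > 0$. So no such cover exists. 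The puncture makes the \emph{log-theoretic} balancing $-r + p = p-r$ consistent, but it does not allow the underlying stable map to have a contact order exceeding the degree; punctured log maps alter the log structure, not the underlying scheme-theoretic tangency at a non-punctured marker. Consequently the bubble moduli cannot consist of the simple fiber covers your argument assumes, and one needs either components of the bubble mapping into $D_\infty$ or a genuinely different tropical picture. This is not a matter of ``evaluating the rubber contribution'' -- the claimed rubber does not exist as described -- and it undermines both the classification of contributing rigid types and the multiplicity $[p-r]$.

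Two further points. First, for $r = 0$ (which the theorem covers, since $r \neq p+q$ and $p+q > 0$ in the nontrivial case) the puncture becomes an interior marked point constrained to a general point of $X$, which degenerates into the $X$-component, not the bubble, contradicting your claim that $v_1$ always carries $x_\out$. Second, the decomposition formula of \cite{decomp} is stated for log stable maps, and you are applying it to punctured invariants with a negative contact order at $x_\out$ without explaining how the degeneration interacts with the puncture. Each of these needs to be addressed before the degeneration-to-the-normal-cone strategy can be made into a proof.
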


\begin{corollary}\label{pstconrel}
$N_{1,q}^r = (q-r)N_{1,q-r}$
\end{corollary}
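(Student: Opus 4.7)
The plan is to derive Corollary \ref{pstconrel} as a direct specialization of Theorem \ref{stconrel}. Substituting $p = 1$ into the identity
\[N_{p,q}^r = [p-r]\,N_{q,p-r} + [q-r]\,N_{p,q-r}\]
immediately yields
\[N_{1,q}^r = [1-r]\,N_{q,1-r} + [q-r]\,N_{1,q-r},\]
valid under the theorem's hypothesis $r \neq 1+q$.

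The key observation is that the first summand vanishes in the range of interest. For $r \geq 1$ one has $1 - r \leq 0$, so the bracket convention $[b] = 0$ for $b < 0$ (together with $[0] = 0$, consistent with $[b] = b$ for $b \geq 0$ needed to make the right-hand side of the corollary equal $(q-r)N_{1,q-r}$) forces $[1-r] = 0$. What remains is $N_{1,q}^r = [q-r]\,N_{1,q-r} = (q-r)\,N_{1,q-r}$ whenever $q \geq r$, while both sides vanish for $r > q$ under the standard convention $N_{1,s} = 0$ for $s < 0$.

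There is essentially no substantive obstacle: the content is entirely furnished by Theorem \ref{stconrel}, and the specialization $p = 1$ is tailored precisely to annihilate the $[p-r]$ term in the relevant range of $r$. The only point requiring care is the boundary case $r = 1$, where one must verify that the $[0]$ coefficient contributes trivially; this is consistent with the bracket convention recorded in Theorem \ref{stconrel} and its use in the preceding references on structure constants for smooth log Calabi--Yau pairs.
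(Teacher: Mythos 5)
Your proof is correct and takes the only natural route; the paper itself states Corollary \ref{pstconrel} without proof precisely because it is the immediate specialization $p=1$ of Theorem \ref{stconrel}, with the first summand $[1-r]N_{q,1-r}$ annihilated by the bracket convention for $r\ge 1$ (the intended range, as seen from its use in the proof of Corollary \ref{mcr2}). Your observation that $[0]=0$ is needed at $r=1$ and that $[b]=b$ for $b\ge 0$ is the implicit convention matches how Theorem \ref{stconrel} must be read for the right-hand side $(q-r)N_{1,q-r}$ to come out.
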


Using these invariants, we associate to each theta function $\vartheta_p$ the series:
\[N_p = t^p + \sum_i iN_{p,i}t^{-i} \in \mathbb{Q}[[NE[X],t,t^{-1}]].\]
Using these series, we wish to show the following:

\begin{proposition}\label{LGres}
In the notation above, we have for all tuples $(p_1,\ldots,p_d) \in \NN_{> 0}^d$:
\begin{equation}\label{residue}
N_{p_1}\cdots N_{p_d}[t^0] = \vartheta_{p_1}\cdots\vartheta_{p_d}[\vartheta_0]
\end{equation}
\end{proposition}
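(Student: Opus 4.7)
The plan is to reduce Proposition~\ref{LGres} to the algebra-homomorphism property of the linear map $\Phi: R_{(X,D)} \to \kk[t,t^{-1}][[\NE(X)]]$ sending $\vartheta_p \mapsto N_p$. Once $\Phi$ is known to be multiplicative, we obtain $\Phi(\vartheta_{p_1}\cdots\vartheta_{p_d}) = N_{p_1}\cdots N_{p_d}$, and taking the $t^0$-coefficient isolates exactly the $\vartheta_0$-coefficient on the theta side, since $N_0 = 1$ and $N_r[t^0] = 0$ for $r \geq 1$ directly from the definition.

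The crucial step is thus $N_p N_q = \sum_r N_{p,q}^r N_r$ for all $p, q \geq 1$, checked coefficient-wise in $t$. For $k \geq 0$ the verification is immediate from Theorem~\ref{stconrel}: the contributions to $N_p N_q[t^k]$ coming from pairing the two leading terms $t^p$ and $t^q$ with the appropriate negative-power corrections give precisely $[p-k]N_{q,p-k} + [q-k]N_{p,q-k}$ for $0 \leq k < p+q$ and $1$ for $k = p+q$, which matches $N_{p,q}^k$. Specializing to $k = 0$ already gives the base case $d = 2$ of the proposition, recovering $N_{p,q}^0 = p N_{q,p} + q N_{p,q}$.

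The delicate part is the coefficient of $t^{-m}$ for $m \geq 1$, which amounts to proving the identity
\[(p+m)N_{q,p+m} + (q+m)N_{p,q+m} + \sum_{\substack{i+j=m\\ i,j\geq 1}} ij\, N_{p,i}N_{q,j} = m\sum_r N_{p,q}^r\, N_{r,m}\]
among 2-pointed invariants. This is not a formal consequence of Theorem~\ref{stconrel}. I would derive it by applying Proposition~\ref{logTRR} to the triple product $\vartheta_p\vartheta_q\vartheta_m$, rewriting the descendent integral $\int_{[\mathscr{M}(X,(p,q,m,0),A)]^{\vir}}\psi_{\out}\ev_{\out}^*([\pt])$ via two different boundary expansions of the single $\psi_{\out}$ insertion (using that on rational curves $\psi$ is equivalent to a sum of boundary divisors) and matching the resulting 2-pointed contributions on each side.

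The main obstacle is precisely this log WDVV-style identity for the $t^{-m}$ coefficient, which is where the argument must leave the purely algebraic realm of Theorem~\ref{stconrel} and use a genuine enumerative input. If the direct coefficient-wise verification of $\Phi$ being multiplicative proves intractable, an alternative is to induct on $d$ directly, strengthening the inductive hypothesis to describe every Laurent coefficient $N_{p_1}\cdots N_{p_{d-1}}[t^k]$ as an explicit log GW quantity (with positive and negative $k$ handled separately), so that the convolution $N_{p_1}\cdots N_{p_d}[t^0] = \sum_k N_{p_1}\cdots N_{p_{d-1}}[t^{-k}]\cdot N_{p_d}[t^k]$ can be matched termwise against the iterated Theorem~\ref{stconrel} expansion of $\vartheta_{p_1}\cdots\vartheta_{p_d}[\vartheta_0]$.
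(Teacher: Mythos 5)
The ring-homomorphism strategy you propose is a recognized alternative route, and the paper explicitly acknowledges it in the remark following Proposition~\ref{LGres}: this is essentially what You does in \cite{YouPLG}. Your framing is correct: if the linear map $\Phi(\vartheta_p)=N_p$ is multiplicative, then $N_{p_1}\cdots N_{p_d} = \Phi(\vartheta_{p_1}\cdots\vartheta_{p_d}) = \sum_r (\vartheta_{p_1}\cdots\vartheta_{p_d}[\vartheta_r])N_r$, and since $N_r[t^0]=\delta_{r,0}$ the claim follows by extracting the $t^0$-coefficient. You also correctly isolate the real content: the $t^{-m}$ coefficient of $N_pN_q = \sum_r N_{p,q}^r N_r$ is a genuine quadratic relation among two-pointed invariants that cannot be deduced algebraically from Theorem~\ref{stconrel} (which you correctly note only pins down the $t^k$-coefficients for $k\geq 0$). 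This is a sharp diagnosis.

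However the proposal as written has a genuine gap precisely at this crucial step. The sketch of deriving the $t^{-m}$ identity by ``two different boundary expansions of the single $\psi_{\out}$ insertion'' in a triple product is not worked out, and it is not clear it can be made rigorous in this form. In logarithmic Gromov--Witten theory, boundary strata of the moduli space do not split as cleanly as in absolute genus-zero GW theory; the comparison between $\psi$-classes and boundary divisors pulls back through the stabilization morphism, but rewriting the resulting boundary terms as products of two-pointed invariants requires either a careful log gluing argument (as the paper does, via Lemma~\ref{glem} and a degeneration of the point constraint to $D$, controlled by the vanishing Lemma~\ref{contcomp}) or the log/orbifold correspondence of \cite{logroot2}, which is what You actually uses. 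Simply asserting that the contributions ``match'' is not a proof, and the structure of the sum $\sum_{i+j=m}ijN_{p,i}N_{q,j}$ with the coefficients $ij$ (reflecting gluing multiplicities at nodes) is exactly the kind of thing that must be extracted from such a gluing formula. The fallback you suggest (strengthening the induction to describe all coefficients $N_{p_1}\cdots N_{p_{d-1}}[t^k]$ as explicit log GW quantities) is circular without first establishing the two-pointed identity, since that identity is what would let you identify those coefficients with structure constants.

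A secondary point: you assert $N_0=1$ ``directly from the definition.'' This is true, but it is not immediate from the definition: it requires noting that $N_{0,i}=0$ for $i\geq 1$ because the evaluation $\ev_{x_i}$ factors through the forgetful map to the one-pointed space $\mathscr{M}(X,\textbf{A},(i))$, on which $\ev_{x_i}^*([\pt])\cap[\cdot]^{\vir}$ has negative virtual dimension. This is not hard, but it deserves a sentence since multiplicativity at $r=0$ (i.e.\ $N_0 N_p = N_p$) is exactly this statement.

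In contrast, the paper's proof sidesteps the multiplicativity of $\Phi$ entirely: it applies Proposition~\ref{logTRR} to realize $\vartheta_{p_1}\cdots\vartheta_{p_d}[\vartheta_0 t^{\textbf{B}}]$ as a descendent log GW invariant, degenerates the point constraint into $D$, classifies contributing tropical types via Lemma~\ref{contcomp} and the $\psi^{d-2}$ insertion, and computes each contribution by the gluing formula of Lemma~\ref{glem}, observing that the resulting sum is precisely the $t^0$-coefficient of $N_{p_1}\cdots N_{p_d}$. Your route would require importing the log/orbifold machinery (or an equivalent) to close the gap in the $t^{-m}$ identity; without that, the argument is incomplete.
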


We will prove this proposition by an argument involving a degeneration of the point constraint, which will facilitate a refinement of the invariant of interest into contributions coming from various decorated tropical types. We first recall a gluing formula derivable from work of Gross in \cite{trglue}.

 \begin{lemma}\label{glem}
 Let $\tau$ be a realizable tropical type of a genus $0$ log stable map to $(X,D)$, $e_1,\ldots e_k,L_1,\ldots,L_m,L_{\out} \in E(G_\tau) \cup L(G_{\tau})$ be the edges and legs containing a vertex $v$, and $L_{\out}$ is a leg of $\tau$ with contact order $0$ with a point constraint $z \in D$ and $\alpha \in H^{\val(v)-3}(\overline{\mathscr{M}}_{0,|L(G_\tau)|})$. Letting $\pmb\tau_i$ be the tropical type produced by cutting at the edge $e_i$ and taking the resulting connected component not containing $v$, and $\pmb\tau_0$ the type produced by cutting at all edges $e_i$ and taking the connected component containing $v$, we have:

\[
 \begin{split}
 \int_{[\mathscr{M}(X,\pmb\tau,z)]^{\vir}} \Forget^*(\alpha) &= \int_{[\mathscr{M}(X,\pmb\tau)]^{\vir}} \Forget^*(\alpha)\ev_{L_{\out}}^*([\pt])\\
 &= m_\tau\int_{[\mathscr{M}(X,\pmb\tau_0)]^{\vir}}\Forget^*(\alpha)\ev_{L_{\out}}^*([\pt])\prod_i \int_{[\mathscr{M}(X,\pmb\tau_i)]^{\vir}} \ev^*_{e_i}(\pt), 
 \end{split}
 \]
 with $m_\tau = |\coker(\prod_i\tau_{i,\NN}^{\gp} \times \prod_i \ZZ \rightarrow \bigoplus_i \pmb\sigma(e_i)) =  |\coker(\ZZ \bigoplus_{i} \ZZ \rightarrow \bigoplus_i \ZZ)|$ the index of the gluing morphism.
 \end{lemma}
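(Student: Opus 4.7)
The plan is to dispatch the first equality as a routine consequence of the construction of the point-constrained virtual class, and then derive the second by combining the logarithmic gluing theorem of \cite{trglue} with a push-pull argument on the resulting cartesian diagram.

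For the first equality, recall that by construction $[\mathscr{M}(X,\pmb\tau,z)]^{\vir} = z^![\mathscr{M}(X,\pmb\tau)]^{\vir}$, where $z : \Spec\,\kk \to X$ is the inclusion of the point constraint. Since $\ev_{L_{\out}}^*[\pt]$ realizes the pullback of the class $[z]$ along the evaluation map $\ev_{L_{\out}}$, compatibility of refined Gysin pullback with the cap product immediately yields
\begin{equation*}
\int_{[\mathscr{M}(X,\pmb\tau,z)]^{\vir}} \Forget^*\alpha = \int_{[\mathscr{M}(X,\pmb\tau)]^{\vir}} \Forget^*\alpha \cdot \ev_{L_{\out}}^*[\pt].
\end{equation*}
This is a standard manipulation for point-constrained Gromov--Witten integrals.

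For the second equality, apply the logarithmic gluing theorem \cite[Theorem~5.9]{trglue}. Cutting $G_\tau$ along $e_1,\ldots,e_k$ produces the tropical types $\pmb\tau_0, \pmb\tau_1,\ldots,\pmb\tau_k$ together with a cartesian diagram
\begin{equation*}
\begin{tikzcd}
\mathscr{M}(X,\pmb\tau) \arrow[r] \arrow[d] & \mathscr{M}(X,\pmb\tau_0) \times \prod_i \mathscr{M}(X,\pmb\tau_i) \arrow[d] \\
\prod_i X_{\pmb\sigma(e_i)} \arrow[r, "\Delta"] & \prod_i X_{\pmb\sigma(e_i)} \times X_{\pmb\sigma(e_i)}
\end{tikzcd}
\end{equation*}
and a virtual class identity
\begin{equation*}
[\mathscr{M}(X,\pmb\tau)]^{\vir} = m_\tau \cdot \Delta^!\left([\mathscr{M}(X,\pmb\tau_0)]^{\vir} \times \prod_i [\mathscr{M}(X,\pmb\tau_i)]^{\vir}\right),
\end{equation*}
with $m_\tau$ the stated lattice cokernel index arising from torsion in the gluing of log structures at the nodes.

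Observe that $\Forget^*\alpha$ and $\ev_{L_{\out}}^*[\pt]$ both pull back from the $\pmb\tau_0$ factor, since $L_{\out}$ and every non-cut feature at $v$ lies in $\pmb\tau_0$. Applying the projection formula along $\Delta$ yields
\begin{multline*}
\int_{[\mathscr{M}(X,\pmb\tau)]^{\vir}} \Forget^*\alpha \cdot \ev_{L_{\out}}^*[\pt] = \\
m_\tau \int_{\prod_i X_{\pmb\sigma(e_i)}} \ev_{0*}\left([\mathscr{M}(X,\pmb\tau_0)]^{\vir} \cap \Forget^*\alpha \cdot \ev_{L_{\out}}^*[\pt]\right) \cdot \prod_i \ev_{i*}[\mathscr{M}(X,\pmb\tau_i)]^{\vir}.
\end{multline*}
The crucial remaining step is to verify that for each $i$, $\ev_{i*}[\mathscr{M}(X,\pmb\tau_i)]^{\vir} = c_i \cdot [X_{\pmb\sigma(e_i)}]$ with $c_i = \int_{[\mathscr{M}(X,\pmb\tau_i)]^{\vir}} \ev_{e_i}^*[\pt]$. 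This reduces to a virtual dimension count: realizability of $\tau$, the genus zero hypothesis, and the fact that the total integral on $\mathscr{M}(X,\pmb\tau,z)$ is zero dimensional force the virtual dimension of each $\mathscr{M}(X,\pmb\tau_i)$ to match $\dim X_{\pmb\sigma(e_i)}$. Once this identification is in hand, capping the pushed forward $0$-cycle coming from $\pmb\tau_0$ against $\prod_i c_i [X_{\pmb\sigma(e_i)}]$ extracts the stated product of factors. The main technical obstacle is the careful identification of the virtual class decomposition in the punctured log setting of \cite{trglue,punc} and ensuring that the dimension count holds uniformly across all the subintegrals.
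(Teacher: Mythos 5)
Your route is the same as the paper's in outline (first equality from the construction of the point-constrained virtual class, second from the gluing theorem of \cite{trglue} plus a diagonal push--pull), but your ``crucial remaining step'' contains a genuine gap. You assert that $\ev_{e_i*}[\mathscr{M}(X,\pmb\tau_i)]^{\vir} = c_i[X_{\pmb\sigma(e_i)}]$ because realizability, genus zero, and zero-dimensionality of the total integral ``force'' $\vdim\,\mathscr{M}(X,\pmb\tau_i) = \dim X_{\pmb\sigma(e_i)}$ for every $i$. This is not a valid deduction: additivity of virtual dimensions under gluing only constrains the sum $\sum_i\bigl(\vdim\,\mathscr{M}(X,\pmb\tau_i) - \dim X_{\pmb\sigma(e_i)}\bigr)$, so individual defects of mixed sign are not excluded, and the lemma is stated for an arbitrary realizable type $\tau$ and arbitrary $\alpha$, with no hypothesis guaranteeing any dimension matching at all (when dimensions do not match, one must show both sides vanish, which your argument does not address). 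The dimension input that actually does the work sits on the other factor: the virtual dimension of $\mathscr{M}(X,\pmb\tau_0,z)$ is $\val(v)-3$, which is exactly the degree of $\Forget^*(\alpha)$, so in the K\"unneth decomposition of the diagonal only the summands placing the degree-zero class $1$ on the $\pmb\tau_0$ factor, and hence $[\pt]$ on each $X_{\pmb\sigma(e_i)}$, can contribute. This is how the paper extracts the factors $\int_{[\mathscr{M}(X,\pmb\tau_i)]^{\vir}}\ev_{e_i}^*([\pt])$, with no claim about $\ev_{e_i*}[\mathscr{M}(X,\pmb\tau_i)]^{\vir}$ needed. (Your pairing-of-pushforwards formulation can be repaired, but by a different observation: if $\vdim\,\mathscr{M}(X,\pmb\tau_i) > \dim X_{\pmb\sigma(e_i)}$ the pushforward vanishes outright, and if it is smaller it pairs to zero against the zero-cycle pushed forward from the $\pmb\tau_0$ factor; in either case the factor $\int\ev_{e_i}^*([\pt])$ vanishes as well, so both sides agree. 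That argument must be made; it is not a forced dimension count.)

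Two smaller points. The gluing theorem has hypotheses you do not verify: tropical transversality (here it follows because the evaluation $\ev_v\colon\tau\to\mathbb{R}_{\ge 0}$ at $v$ is surjective) and flatness of the evaluation map on $\prod_i\mathfrak{M}^{\ev}(\mathcal{A},\tau_i)$ (which holds because the gluing stratum is the deepest stratum). Also, the output of the theorem is the identity $\phi_*[\mathscr{M}(X,\pmb\tau)]^{\vir} = m_\tau\,\Delta^!\bigl(\prod_i[\mathscr{M}(X,\pmb\tau_i)]^{\vir}\bigr)$ on the space of tuples that glue schematically, not an equality of classes on $\mathscr{M}(X,\pmb\tau)$ as you wrote; this is harmless for the integral because all of your insertions are pulled back from the product, but it should be stated in the pushforward form.
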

 
 \begin{proof}
The first equality follows from the fact that 
\[z_*[\mathscr{M}(X,\pmb\tau,z)]^{\vir} = [\mathscr{M}(X,\pmb\tau)]^{\vir}\cap \ev_{L_{\out}}^*([\pt]),\] for $z: \mathscr{M}(X,\pmb\tau,z) \rightarrow \mathscr{M}(X,\pmb\tau)$ the inclusion map by construction. To prove the second equality, let $\mathscr{M}^{sch}(X,\pmb\tau) = \prod_i \mathscr{M}(X,\pmb\tau_i) \times_{\prod_i X_{\pmb\sigma(e_i)}^2} X_{\pmb\sigma(e_i)}$ be the space of tuples of log curves of type $\pmb\tau_i$ which schematically glue, and $\phi: \mathscr{M}(X,\pmb\tau) \rightarrow \mathscr{M}^{sch}(X,\pmb\tau)$ be the natural morphism. Setting $\Delta: \prod_i X_{\pmb\sigma(e_i)} \rightarrow \prod_i X_{\pmb\sigma(e_i)}^2$ to be the diagonal morphism, then by \cite[Theorem $5.7$]{trglue}, if $\tau$ is tropically transverse in the sense of loc. cit. and $ev: \prod_i\mathfrak{M}^{\ev}(\mathcal{A},\tau_i) \rightarrow \prod_i X_{\pmb\sigma(e_i)}^2$ is flat, then:
 
 \[\phi_*[\mathscr{M}(X,\pmb\tau)]^{\vir} = m_{\tau}\Delta^!(\prod_i [\mathscr{M}(X,\pmb\tau_i)]^{\vir})\]
 Tropical transversality follows from the fact that evaluation morphism $\ev_v: \tau \rightarrow \mathbb{R}_{\ge 0}$ at $v$ is surjective, and flatness of the map $ev$ follows from the fact that the gluing stratum is the deepest stratum.
 
By working in a cohomology theory which has a Kunneth decomposition, we can express the Gysin pullback along the diagonal as pairing the homology class $[\mathscr{M}(X,\pmb\tau_0,z)]^{\vir} \times \prod_{i=1}^k [\mathscr{M}(X,\pmb\tau_i)]^{\vir}$ with $\sum_i \prod_{j=1}^k\gamma_{ij}\otimes \gamma^{ij}$, for $\gamma_{ij}$ a graded basis of the cohomology of $X_{\pmb\sigma(e_i)}$, and $\gamma^{ij}$ the dual basis under the Poincare pairing, which we may assume contains the fundamental class $1$. Since the virtual dimension of $\mathscr{M}(X,\pmb\tau_0,z)$ is $\val(v)-3$, and the insertion $\Forget^*(\alpha)$ of the invariant in question pulls back from the component $\mathscr{M}(X,\pmb\tau_0,z)$ of the product, for a term above to contribute non-trivially, we must have $\gamma_{ij} = 1$ and hence $\gamma^{ij} = [\pt]$. Taking degrees yields the desired equality.

 \end{proof}

 We also require the the following lemma, which will put sufficient constraints on possible contributing tropical types in the upcoming degeneration argument:

\begin{lemma}\label{contcomp}
Let $\pmb\tau$ be a decorated tropical type containing a vertex $v$ with $\pmb\sigma(v) = \mathbb{R}_{>0}$, and suppose $v$ is adjacent to a leg $l$ with contact order $0$. Let $p \in D$ be a point and $\mathscr{M}(X,\pmb\tau,p)$ be the moduli space of log stable maps mapping the marked point $x_l$ associated with $l$ to $p$, then $[\mathscr{M}(X,\pmb\tau,p)]^{\vir} = 0$ if $v$ is decorated by a non-zero curve class.
\end{lemma}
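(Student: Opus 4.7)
The plan is to apply the gluing formula of Lemma \ref{glem} at $v$ to isolate the local contribution, then exhibit a forgetful morphism to a moduli of ordinary stable maps to $D$ with positive virtual relative dimension, and conclude by a projection argument.

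First, I would apply Lemma \ref{glem} at $v$: cutting all adjacent edges yields a local type $\pmb\tau_0$ with single vertex $v$ decorated by $\mathbf{A}(v)\ne 0$, the original contact-order-$0$ leg $l$, and new legs $e_1,\ldots,e_k$ replacing the cut edges. Any integral against $[\mathscr{M}(X,\pmb\tau,p)]^{\vir}$ then factors through a local integral
\[I_v := \int_{[\mathscr{M}(X,\pmb\tau_0)]^{\vir}} \Forget^*(\alpha) \cdot \ev_l^*([\pt_D]) \cdot \prod_i \ev_{e_i}^*(\beta_i),\]
so it suffices to show $I_v=0$ for every admissible $\alpha$ and $\beta_i$. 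Since $\pmb\sigma(v)=\mathbb{R}_{>0}$, every log stable map parametrized by $\mathscr{M}(X,\pmb\tau_0)$ has its component $C_v\cong\mathbb{P}^1$ mapping entirely into $D$, inducing a natural forgetful morphism
\[q\colon \mathscr{M}(X,\pmb\tau_0) \longrightarrow \overline{\mathscr{M}}_{0,k+1}(D,\mathbf{A}'(v))\]
to the Kontsevich moduli of ordinary stable maps to $D$ of class $\mathbf{A}'(v)\in H_2(D)$, through which every insertion above factors.

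The key point is that $q$ has virtual relative dimension $+1$. Using the residue sequence $0\to\mathcal{O}_D\to T_X(-\log D)|_D\to T_D\to 0$ pulled back along $f\colon C_v\to D$ (non-constant by $\mathbf{A}(v)\ne 0$), the long exact cohomology sequence on $C_v\cong\mathbb{P}^1$ yields
\[H^0(C_v, f^*T_X(-\log D)) \cong \kk\oplus H^0(C_v, f^*T_D), \qquad H^1(C_v, f^*T_X(-\log D)) \cong H^1(C_v, f^*T_D),\]
so the relative virtual rank of $q$ is $\chi(\mathcal{O}_{\mathbb{P}^1})=1$; tropically, this extra deformation corresponds to the freedom to slide $v$ along the open ray $\mathbb{R}_{>0}$. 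By the virtual projection formula, $q_*[\mathscr{M}(X,\pmb\tau_0)]^{\vir}$ vanishes in the Chow degree that pairs with $q^*$-classes, forcing $I_v=0$.

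The main obstacle is setting up $q$ rigorously with a compatible perfect obstruction theory, identifying $\mathscr{M}(X,\pmb\tau_0)$ as a log enhancement of the Kontsevich moduli so that the relative POT realizes precisely the $\kk$-factor coming from the residue sequence above. An alternative route would use the free $\mathbb{G}_m$-action on $\mathscr{M}(X,\pmb\tau_0)$ obtained by scaling the log structure at $v$ — free precisely when $\mathbf{A}(v)\ne 0$ — presenting $q$ as a $\mathbb{G}_m$-torsor and concluding by the ordinary projection formula applied to the virtual class pulled back from the quotient.
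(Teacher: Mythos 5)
Your approach is genuinely different from the paper's. You try to forget the log structure entirely and land in the Kontsevich moduli $\overline{\mathscr{M}}_{0,k+1}(D,\mathbf{A}'(v))$ of ordinary stable maps to the divisor, then argue via a relative virtual dimension count. The paper instead \emph{stays inside log geometry}: after the same gluing reduction, it replaces the local type $\pmb\gamma$ by a new two-legged \emph{log} type $\pmb\gamma'$ (one contact-order-$0$ leg, one leg of contact order $\textbf{B}\cdot D$) by partially stabilizing the curve and carefully transporting the log enhancement, then observes that the point constraint makes $\vdim\,\mathscr{M}(X,\pmb\gamma',p)=-1$, so $[\mathscr{M}(X,\pmb\gamma',p)]^{\vir}=0$; a cartesian diagram with compatible obstruction theories and \cite[Lemma A.12]{int_mirror} propagates this vanishing back to $\pmb\gamma$. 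Your residue-sequence computation is correct as a fiberwise statement — the dual residue sequence $0\to\mathcal{O}_D\to T_X(-\log D)|_D\to T_D\to 0$ does give $\chi(f^*T_X(-\log D))=\chi(f^*T_D)+1$ on a $\mathbb{P}^1$ — and correctly identifies the extra deformation as the tropical freedom of sliding $v$ along $\mathbb{R}_{>0}$.

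However, the concluding step has a real gap, and the proposed alternative is actually wrong. For the ``virtual projection formula'' to force $I_v=0$ you need more than a relative virtual dimension count: you need either that $q$ is flat of positive relative dimension with $[\mathscr{M}(X,\pmb\tau_0)]^{\vir}=q^*[\overline{\mathscr{M}}_{0,k+1}(D,\mathbf{A}'(v))]^{\vir}$ (so that $q_*q^*=0$), or an explicit compatibility of obstruction theories as in the paper's cartesian square. Pushing forward $[\mathscr{M}(X,\pmb\tau_0)]^{\vir}$ to a class in $A_{\vdim N+1}(N)$ does not by itself vanish, since moduli of stable maps to $D$ can have excess dimension. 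You flag this as the ``main obstacle,'' but it is essentially the content of the lemma, not a technical afterthought: one must show the POT for $\mathscr{M}(X,\pmb\tau_0)$ over the log Artin fan is compatible with the POT for stable maps to $D$ across the whole family (including nodal degenerations of $C_v$), not just at a smooth fiber. The $\mathbb{G}_m$-torsor escape route fails for a structural reason: $\mathscr{M}(X,\pmb\tau_0)$ is proper (it is closed in $\mathscr{M}(X,\beta)$), and a morphism from a proper stack to a separated target is proper, so its fibers are proper — but $\mathbb{G}_m$ is not proper, so $q$ cannot be a $\mathbb{G}_m$-torsor. The extra $\mathbb{G}_m$ coming from the log structure at $v$ sits in the inertia/automorphisms of the log enhancement rather than as a free action with separated proper quotient map. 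The paper's argument, by mapping to a genuinely simpler \emph{log} moduli problem whose point-constrained virtual dimension is $-1$, sidesteps both of these difficulties.
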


\begin{proof}
By the gluing formula for the virtual class recalled in the proof of Lemma \ref{glem}, we have $[\mathscr{M}(X,\pmb\tau,p)]^{\vir} \not= 0$ only if the tropical type $\pmb\gamma$ produced by cutting edges of $\pmb\tau$ which contains a unique vertex $v$ has non-zero virtual fundamental class. Letting $\textbf{B} \in H_2(X)$ be the degree of $\pmb\gamma$, if $\pmb\gamma$ contains only the leg $l$ as a $1$-dimensional feature, then $0 = \textbf{B}\cdot D$ by the log balancing condition, which implies $\textbf{B} = 0$ since $D$ is ample and $\textbf{B}$ is effective. Thus, we may assume $L(G_{\gamma}) \ge 2$. 

Consider instead the tropical type $\pmb\gamma'$ consisting of a single vertex $v$ of degree $\textbf{B}$ and two legs, one leg $l_{\out}$ with contact order $0$ and another leg with contact order $\textbf{B}\cdot D$. Observe that for any log stable map $f: C \rightarrow X$ over a base $S$ marked by $\pmb\gamma$, we have that $f$ scheme theoretically factors through $D \subset X$. Hence, we can produce a stable map $\overline{f}: C' \rightarrow X$ schematically factoring through $D$ by stabilizing the underlying stable map of $f$ after forgetting the relevant marked points, yielding a partial stabilization morphism $\rho: C \rightarrow C'$. By \cite[Lemma $8.8$]{stacktrop}, after equipping $C$ with a log structure which forgets the relevant collection of marked points, the pushforward log structure gives $C'$ the structure of a log prestable curve. Moreover, the family of tropical curves is marked by curve classes $\NE(X)$ in the sense of \cite[Section $2.2.2$]{punc}. 

To give a log enhancement of the stable map $C \rightarrow X$, we first produce a log enhancement of the prestable map $C' \rightarrow X$. Note that since $f$ maps $C$ entirely into $D$, it suffices to construct a PL function $h: \Sigma(C) \rightarrow \mathbb{R}_{\ge 0}$ such that the corresponding line bundle $\mathcal{O}(h)$ is isomorphic to the pullback of $\mathcal{O}_X(D)$. Note that since $f: C \rightarrow X$ is a log map, we have $f^*\mathcal{O}_X(D) = \mathcal{O}_C(\Sigma(f))$. Moreover, for $\pi: C \rightarrow S$ the family of log curves, we may write $\Sigma(f) = \Sigma(\pi)^*(f_{v}) + f'$ for $f' \in PL(\Sigma(C))$ which vanishes on the subcomplex of cones associated with the leg $l_{\out}$. 

Since $\Sigma(S)$ parameterizes a family of genus zero tropical curves decorated by curve classes in $\NE(X)$, there exists a unique PL function $h'$ on $\Sigma(C)$ which is zero on every cone associated with a vertex contained in $l_{\out}$, and has slopes along edges and legs such that the following log balancing condition is satisfied:
$$\sum_{v \in e} \textbf{u}(e) = C_v\cdot D.$$
Consider now the PL function $h = h' + \Sigma(\pi)^*(f_v)$. Observe now that since $\mathcal{O}(h)$ and $\mathcal{O}(f)$ restricted to each geometric fiber have same degree, and the fibers all have genus zero, $\mathcal{O}(h-f)$ is fiberwise trivial, hence must be pulled back from a bundle on S. Since the restriction of $h - f$ to the subcomplex of cones associated with the leg $l_{\out}$ is zero, the bundle restricted to the section associated with $x_{\out}$ is trivial, and we have $\mathcal{O}(h - f) \cong \mathcal{O}$. If follows that $h: \Sigma(C) \rightarrow \mathbb{R}_{\ge 0}$ now satisfies the desired balancing condition, inducing a prestable log map $C \rightarrow X$.

The log map produced above descends to a log enhancement of the stable map $C' \rightarrow X$ by noting that stabilization removes all bivalent and univalent vertices marked by contracted curve classes, hence $\Sigma(h)$ restricted to such edges are linear, hence $h$ is in the image of the pullback morphism $PL(\Sigma(C')) \rightarrow PL(\Sigma(C))$, which we may also identify with $h$. As $\rho^{-1}: \Pic(C') \rightarrow \Pic(C)$ is injective and $\rho^*\mathcal{O}(h) \cong \rho^*\overline{f}^*\mathcal{O}_X(D)$, it follows that $\mathcal{O}(h) \cong \mathcal{O}_X(D)$. This yields a log map $C' \rightarrow X$ of type $\gamma'$. Taking the unique associated basic log map $C' \rightarrow X$ of type $\gamma'$ gives a morphism $S \rightarrow \mathscr{M}(X,\pmb\gamma')$. 


The above construction on $S$ valued points for all schemes $S$ naturally extends to a morphism of stacks $\mathscr{M}(X,\pmb\gamma)\rightarrow \mathscr{M}(X,\pmb\gamma')$. The moduli stack $\mathscr{M}(X,\pmb\gamma')$ has virtual dimension $dim\text{ }X - 2$ by \cite{punc} Proposition $3.30$ and Riemann-Roch, hence the point constrained moduli stack $\mathscr{M}(X,\pmb\gamma',p)$ has virtual dimension $-1$, and in particular has vanishing virtual fundamental class. Now recall from \cite[Definition $3.8$]{punc} the moduli stacks $\mathfrak{M}(\mathcal{X},\pmb\gamma)$ and $\mathfrak{M}(\mathcal{X},\pmb\gamma')$ of decorated prestable maps to $\mathcal{X}$ marked by the decorated tropical types $\pmb\gamma$ and $\pmb\gamma'$ respectively. The construction of the morphism $\mathscr{M}(X,\pmb\gamma) \rightarrow \mathscr{M}(X,\pmb\gamma')$ also gives a morphism $\mathfrak{M}(\mathcal{X},\pmb\gamma) \rightarrow \mathfrak{M}(\mathcal{X},\pmb\gamma')$, where the partial stabilization is determined by the marking data as in \cite[section $2$]{Cos}. Now observe we have the following commuting square

\[
\begin{tikzcd}
\mathscr{M}(X,\pmb\gamma,p) \arrow{r} \arrow{d} & \mathfrak{M}^{\ev}(\mathcal{A}_{\NN},\pmb\gamma) \times_{D} p \arrow{d}\\
\mathscr{M}(X,\pmb\gamma',p)\arrow{r} & \mathfrak{M}^{\ev}(\mathcal{A}_{\NN},\pmb\gamma')\times_{D} p
\end{tikzcd}
\]

This commuting square is in fact cartesian. To see this, suppose we have morphisms $S \rightarrow \mathscr{M}(X,\pmb\gamma',p)$ and $S \rightarrow \mathfrak{M}(\mathcal{X},\pmb\gamma)$ from a scheme $S$ which induce isomorphic morphisms $S \rightarrow \mathfrak{M}(\mathcal{X},\pmb\gamma')$. Then we have a family of prestable log curves $C \rightarrow S$ with components decorated by cuve classes in $H_2(X)$, and a map $C \rightarrow \mathcal{A}_{\NN}$, as well as a partial stabilization of the family $\overline{C} \rightarrow S$, and a log stable map $\overline{f}: \overline{C} \rightarrow X$ of tropical type $\gamma'$. We lift the underlying stable map $\overline{f}$ to a stable map $f: C \rightarrow X$ by precomposing $\overline{f}$ with the partial stabilization morphism $c: C \rightarrow \overline{C}$. Note now the decoration of $C$ is induced by the stable map $C \rightarrow X$. For this stable map to have a log lift whose image under the forgetful map recovers the given map $C \rightarrow \mathcal{A}_{\NN}$, since $C$ maps entirely into $D$, we only need $\mathcal{O}(\Sigma(f)) = f^*\mathcal{O}(D)$. This follows from the fact that $\mathcal{O}(\Sigma(f)) = c^*\mathcal{O}(\Sigma(\overline{f}))$ by construction of $\mathfrak{M}^{\ev}(\mathcal{X},\pmb\gamma) \rightarrow \mathfrak{M}^{\ev}(\mathcal{X},\pmb\gamma')$, and $f^*\mathcal{O}(D) = c^*\overline{f}^*\mathcal{O}(D) = c^*\mathcal{O}(\Sigma(\overline{f}))$.

The standard obstruction theories on $\mathscr{M}(X,\pmb\gamma)$ and $\mathscr{M}(X,\pmb\gamma')$ also give obstruction theories for the two horizontal morphisms above. Since the morphism of universal families over the moduli spaces of the left vertical arrow commute with the universal morphisms, we find that the obstruction theories are compatible. Hence, by \cite[Lemma $A.12$]{int_mirror}, $[\mathscr{M}(X,\pmb\gamma,p)]^{\vir} = 0$ if $\pmb\gamma$ is has non-zero total degree.
\end{proof}

Using the lemma above, we apply a degeneration argument to show that the descendent log Gromov-Witten invariants $\vartheta_p^k[\vartheta_0]$ are formal residues of the power series $N_p$:

\begin{proof}[Proof of Theorem \ref{LGres}]
Letting $\beta = ((p_1,\ldots,p_d,0),\textbf{B})$ be a tropical type of log stable map, by \cite[Theorem $1.3$]{logWDVV}, $\vartheta_{p_1}\cdots\vartheta_{p_d}[\vartheta_0t^{\textbf{B}}]$ is the log Gromov-Witten invariant:

\begin{equation}\label{pdegin}
N_{p_1,\ldots,p_d}^{\textbf{B}}:= \int_{[\mathscr{M}(X,\beta,z)]^{\vir}}\psi_{x_{\out}}^{d-2}.
\end{equation}

The point $z \in X$ above is a general point of $X$. To refine this invariant, we degenerate to point constraint to be contained in $D$. More precisely, the inclusion $z \rightarrow D$ of the point upgrades to a strict map $z^{\dagger} \rightarrow D$, with $z^\dagger$ the standard log point and $D$ equipped with the pulled back log structure from $X$. Since $\Sigma(X) \cong \mathbb{R}_{\ge 0}$, transversality in the sense of \cite[Definition $2.6$]{int_mirror} is automatic. Thus by the same argument as in \cite[Theorem $7.10$]{int_mirror}, or Steps $3$ and $4$ of the proof of \cite[Theorem $1.1$]{mirrcomp}, the log Gromov-Witten invariant of Equation \ref{pdegin} is equal to to analogous integral over $\mathscr{M}(X,\beta,z)$ for $z \in D$.

In order the relate this invariant with the two point invariants of Definition \ref{2pinv}, we give a virtual decomposition of the moduli stack $\mathscr{M}(X,\beta,z)$, that is, we decompose the fundamental class of $\mathfrak{M}^{\ev}(\mathcal{X},\beta,z)$. 

To that end, let $\pmb\tau$ be a potentially contributing decorated tropical type to the decendent log Gromov-Witten invariant $N_{p_1,\ldots,p_d}^{\textbf{B}}$. By Lemma \ref{contcomp}, the vertex $v$ contained in the leg $L_{\out}$ associated with $x_{\out}$ must satisfy $\pmb\sigma(v) = \mathbb{R}_{>0}$ and is decorated by a contracted curve class. By this fact, together with the log balancing condition, we conclude the sum of the contact orders associated to edges and legs containing $v_0$ must sum to $0$. Moreover, by standard comparison results for $\psi$ classes, we have $\psi_{x_{\out}}$ pulls back from the $\overline{\mathscr{M}}_{0,d+1}$ along the stabilization map $\mathscr{M}(X,\beta,z) \rightarrow \overline{\mathscr{M}}_{0,d+1}$. Since $\psi_{x_{\out}}^{d-2} = [\pt] \in A^{d-2}(\overline{\mathscr{M}}_{0,d+1})$, we must have $v_0$ is $d+1$ valent. Moreover, for $e \in E(G_\tau)$ any edge containing $v_0$, and $G_e$ the graph produced by cutting $G_\tau$ at $e$ and taking the connected component which does not contain $v_0$, $G_e$ must have at most one leg. As in \cite[Theorem $6.4$ Step $2$]{scatt}, the tropical modulus of $\tau$ must be $1$-dimensional, hence must be entirely determined by the image of $v$. In particular, all remaining vertices must map to $0 \in \mathbb{R}_{\ge 0}$. Since $\val(v) = d+1$, any other vertex must be contained in exactly one additional leg, see Figure \ref{ttype}.

Having restricted the tropical behavior of a contributing tropical type, we turn now to computing its contribution. To do so, we first examine the multiplicity of the component $\mathfrak{M}_\eta$ of $\mathfrak{M}^{\ev}(\mathcal{X},\beta,z)$ with $\eta$ a geometric generic point associated with a prestable log map $f:C_\eta \rightarrow \mathcal{X}$ of type $\tau$. The same argument as in \cite[Theorem $6.4$ Step $3$]{scatt} shows that the multiplicity is given by $|\coker(\ev_{v}:\tau_\NN^{\gp} \rightarrow \ZZ)|$, in particular depending only on $\tau$. Letting $e_i \in E(G_{\tau})$ be an enumeration of the compact edges of $G_{\tau}$, and $\textbf{u}(e_i)$ the contact order of $e_i$, the previous multiplicity is thus equal to $\lcm(\textbf{u}(e_1),\ldots,\textbf{u}(e_{|E(G_{\tau})|}))$.  In particular, after noting $Aut(\tau/\beta) = 1$ as any automorphism of $G_{\tau}$ which fixes the legs must be trivial, by taking the virtual pullback of the resulting decomposition of $[\mathfrak{M}^{\ev}(\mathcal{X},\beta,z)]$ into irreducible components, we have:

\[[\mathscr{M}(X,\beta,z)]^{\vir} = \sum_{\pmb\tau} \lcm(\textbf{u}(e_1),\ldots,\textbf{u}(e_{|E(G_{\tau})|})) i_{\tau*}[\mathscr{M}(X,\pmb\tau,z)]^{\vir}\]

%
%
%

For a tropical type $\tau$ described above, we let $N_{\pmb\tau} = deg\text{ } \psi_{x_{\out}}^{d-2}\cap[\mathscr{M}(X,\pmb\tau,z)]^{\vir}$. Note that by Lemma \ref{glem} and $\psi_{x_{\out}}^{d-2}\cap [\overline{\mathscr{M}}_{0,d+1}] = [\pt]$, we have:

\[N_{\pmb\tau} = m_\tau \prod_i N_{p_i,\textbf{u}(e_i)}\]
In the above, $m_\tau = |\coker(\ZZ \bigoplus_{e_i} \ZZ \rightarrow \bigoplus_i \ZZ)|$ with the homomorphism sending the first basis vector to $(-1,\ldots,-1)$, and the basis element corresponding to the length of the edge $e_i$ to $\textbf{u}(e_i)$. Hence, $m_\tau = \frac{\prod_i \textbf{u}(e_i)}{\lcm(\textbf{u}(e_1),\ldots,\textbf{u}(e_{|E(G_{\tau})|})}$. Thus, the invariant \ref{pdegin} is equal to:

\[\sum_{\tau} \prod_{i}  \textbf{u}(e_i)N_{p_i,\textbf{u}(e_i)}\]

Finally, note that the tropical type $\tau$ is uniquely determined by a subset of legs $p_1,\ldots p_t$ which contain the vertex $v_{\out}$ and for every leg $p_i$ not in this list, a positive integer $m_i$ such that $\sum_i m_i = \sum p_i$. Indeed, this data clearly determines a tropical type $\tau$, and the additional condition must hold since the class decorated the vertex $v_{\out}$ is contracted. We can therefore rewrite the above equation as:

\[\sum_{I \subset [d]} \sum_{m_1+\cdots + m_{|[d]\setminus I| = \sum_{i \in I}p_i}}  \prod_{i \in [d]\setminus I} m_i N_{p_i,m_i}\]
The above expression matches the $t^0$ coefficient of $N_{p_1}\cdots N_{p_d}$, as required. 
\end{proof}

\begin{figure}[h]\label{ttype}
\centering
\begin{tikzpicture}
\draw[ball color = red] (0,2) circle (0.1cm);
\draw[ball color = red] (0,1) circle (0.1cm);
\draw[ball color = red] (1,1.5) circle (0.1cm);
\draw[->,black] (0,2)--(4,2);
\draw[->,black] (0,1)--(4,1);
\draw[->,black] (1,1.6)--(4,1.6);
\draw[->,black] (1,1.4)--(4,1.4);
\draw[black] (0,2)--(1,1.5);
\draw[black] (0,1)--(1,1.5);
\draw[ball color = blue] (0,0) circle (0.1cm);

\draw[->,blue] (0,0)--(4,0);

\end{tikzpicture}
\caption{An example of a tropical type $\tau$ contributing to $N_{p_1,\ldots,p_4}^\textbf{B}$.}
\end{figure}

\begin{remark}
We note that Theorem \ref{LGres} follows from \cite[Section $3.3$]{YouPLG}, who argues that assignment of the series $N_p$ gives a ring homomorphism with $N_0 = 1$ using the log/orbifold correspondence of \cite{logroot2}, and the TRR result of Theorem \ref{logTRR}. We prove the result above via a direct degeneration argument as these arguments appear most ready to generalize beyond the smooth divisor setting.
\end{remark}

Before proving Corollary \ref{mcr2}, we note the following proposition, which follows from a straightforward exercise with formal power series:

\begin{lemma}\label{exercise}
Consider the collection of power series of the form $t^k + \sum_{i> 0} a_it^{-i} \in \kk[\NE(X)][[t]]$ for some fixed $k \ge 1$, and suppose we have a known series $N_1 = t + \sum_{i>0} c_it^{-i}$:
\begin{enumerate}
\item If $k = 1$, then for $N,N'$ among this collection, we have $N = N'$ if and only if $N^d[t^0] = N^{'d}[t^0]$ for all $d\ge 0$.
\item For $k\ge 1$ arbitrary and $N,N'$ among this collection, we have $N = N'$ if and only if $NN_1^d[t^0] = N'N_1^d[t^0]$ for all $d\ge 0$. 
\end{enumerate} 
\end{lemma}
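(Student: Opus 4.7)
The plan is to prove both parts by induction on the index of the coefficient we wish to recover. The forward implications ($N = N'$ implies equality of all periods) are tautological, so I will focus on the reverse direction: recovering the coefficients $a_i$ of $N = t^k + \sum_{i>0} a_i t^{-i}$ from the given sequence of period-type data.

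For part (1), I expand $N^{d+1}$ via the multinomial theorem. The $[t^0]$ coefficient is a finite sum over tuples $(k_0, k_1, k_2, \ldots)$ of non-negative integers satisfying $\sum_{i \geq 0} k_i = d+1$ and $k_0 = \sum_{i \geq 1} i k_i$. Substituting the second relation into the first gives $\sum_{i \geq 1} (i+1) k_i = d+1$, so any tuple with $k_i \neq 0$ for some $i \geq d$ must have $i = d$, $k_d = 1$, and all other $k_j = 0$ for $j \geq 1$; this forces $k_0 = d$ and contributes $\binom{d+1}{d,1} a_d = (d+1) a_d$. Every other admissible tuple has $k_i = 0$ for all $i \geq d$ and so involves only $a_1, \ldots, a_{d-1}$. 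Hence
\[
N^{d+1}[t^0] = (d+1)\, a_d + P_d(a_1, \ldots, a_{d-1})
\]
for a universal polynomial $P_d$ with integer coefficients. Working in characteristic zero, $d+1$ is a unit in $\kk$, so the sequence $(N^{d+1}[t^0])_{d \geq 1}$ inductively determines all $a_d$.

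For part (2), I apply the same expansion to $NN_1^d$. The $[t^0]$ coefficient splits into a contribution from the $t^k$ term of $N$ (a polynomial in the known coefficients $c_j$ of $N_1$) plus, for each $i \geq 1$, a contribution from $a_i t^{-i}$ paired with $d$ factors drawn from $N_1$ whose exponents sum to $i$. Writing these factors as $m$ copies of $t$ and $d - m$ copies of the form $c_{j_l} t^{-j_l}$ with $j_l \geq 1$, the constraint becomes $m - \sum_l j_l = i$ with $d - m \leq m - i$, giving $2m \geq d + i$ and $m \leq d$. No solution exists when $i > d$, and when $i = d$ only $m = d$ works (forcing every factor to equal $t$), contributing $a_d$ with coefficient $1$. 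Therefore
\[
NN_1^d[t^0] = a_d + Q_d(a_1, \ldots, a_{d-1};\, c_1, c_2, \ldots)
\]
for a universal polynomial $Q_d$. Since the $c_j$ are known, $(NN_1^d[t^0])_{d \geq 1}$ inductively determines all $a_d$.

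The argument is essentially combinatorial bookkeeping, and no real obstacle is anticipated; the only substantive point is the triangularity of each expansion, which follows from the elementary inequalities above. The leading coefficient that must be inverted to solve for $a_d$ is automatic in part (2), where it equals $1$, and is $d+1$ in part (1), which is a unit provided $\kk$ has characteristic zero.
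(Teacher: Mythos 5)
Your proof is correct and takes essentially the same approach as the paper: an induction on the index $d$ showing that the sequence $N^d[t^0]$ (resp.\ $NN_1^d[t^0]$) determines the $a_d$ by a triangular system. You are in fact a bit more careful than the paper's own proof, which writes the leading contribution to $N^{i+1}[t^0]$ as ``$a_{-1}^{i}a_i = a_i$'' and thereby drops the multinomial factor $i+1$ that you correctly identify, and which is the reason a characteristic-zero hypothesis on $\kk$ is genuinely needed for part~(1) but not for part~(2).
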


\begin{proof}
We prove the first statement, the proof for the second statement is similar. We prove that the coefficient $a_i$ is determined by the periods $N^d[t^0]$ for $d \le i+1$, with case $i = -1$ given by definition. For the induction step, note:
\[N^{i+1}[t^0] = \sum_{d_1+\cdots+d_{i+1} = 0} \prod_{j =1}^{i+1} a_{d_j}.\]
Since the largest positive power of $t$ is $1$, we cannot have $d_j > i$ for any tuple $(d_1,\ldots,d_{i+1})$ contributing to $N^{i+1}[t^0]$. By induction, all terms on the righthand side of the above equation associated to tuples $(d_1,\ldots,d_{i+1})$ with $d_j < i$ for all $j$ are known. The only remaining term is $a_{-1}^{i}a_i = a_i$, hence $a_i$ is determined by $N^d[t^0]$ for $d \le i+1$, as required.
\end{proof}

By Theorem \ref{LGres} and Lemma \ref{exercise}, the log Gromov-Witten invariants $N_{1,i}$ are determined by the values $\vartheta_1^k[\vartheta_0]$ for $k \ge 0$. By Theorem \ref{mthm1}, these values are the regularized quantum periods of $X$. Moerover, using the relation in Corollary \ref{pstconrel}, we may show that the $2$-pointed invariants $N_{1,k}$ determine all structure constants for the products $\vartheta_p\vartheta_1$ for all $p\ge 0$. An induction argument then shows that all remaining structure constants and series $N_p$ are determined, as follows:

\begin{proof}[Proof of Corollary \ref{mcr2}]
The fact that the invariants of $(3)$ from the corollary statement may be used to recover the invariants in $(2)$ follows by Theorem \ref{stconrel}, and Theorem \ref{mthm1} implies that $(2)$ may be used to recover $(1)$. Since the quantum period may be used to recover the series $N_1$ by Theorem \ref{LGres} and Lemma \ref{exercise}, it suffices to compute the series $N_p$ for all $p\in \NN$ from the knowledge of series $N_1$. We proceed by induction on $p \in \NN$ with base case $p=1$. Thus, suppose we know $N_{p}$ for all $p < n$. To compute $N_{n}$, we note:

\[\vartheta_n = \vartheta_1\vartheta_{n-1} - \sum_{j\le n-1} N_{1,n-1}^j\vartheta_j\]
By Theorem \ref{LGres}, we have the following equality of coefficients of power series for any integer $d \ge 0$;

\[N_nN_1^d[t^0] = (N_1N_{n-1} - \sum_{j \le n-1} N_{1,n-1}^jN_j)N_1^d[t^0]\]
We now note that the series $N_n$ and $N_1N_{n-1} - \sum_{j \le n-1} N_{1,n-1}^jN_j$ are of the form $t^n + \sum_{i> 0} a_it^{-i}$. This fact regarding the former series is by definition. For the latter series, we note that 
\[N_1N_{n-1}[t^j] = \begin{cases}
0&j>n\\1& j = n\\ (n-1-j)N_{1,n-1-j}& 1\le j\le n-1
\end{cases}\]
By Corollary \ref{pstconrel}, we have $(n-1-j)N_{1,n-1-j} = N_{1,n-1}^j$. Since $N_j = t^j + \sum_i iN_{j,i}t^{-i}$, the right hand side also has the form $t^n + \sum_{i> 0} a_it^{-i}$. It now follows from Lemma \ref{exercise} that $N_n = N_1N_{n-1} - \sum_{j \le n-1} N_{1,n-1}^jN_j$. Induction now determines the power series $N_p$ for all $p \in \NN$.
\end{proof}

\begin{remark}
We note that the essential place in the proof of Corollary \ref{mcr2} which used the assumption that $X$ is Fano is that the quantum periods are equal to $\vartheta_1^k[\vartheta_0]$ for $k\ge 0$. If we replace knowledge of the quantum periods with knowledge of $\vartheta_1^k[\vartheta_0]$ for $k\ge 0$ in the statement of Corollary \ref{mcr2}, a similar theorem holds more generally.
\end{remark}

\nocite{*}
\bibliographystyle{amsalpha}
\bibliography{regqp+2point}

\end{document}